\newcommand{\mathcircumflex}[0]{\mbox{\^{}}}
\numberwithin{equation}{section}
\numberwithin{figure}{section}
\theoremstyle{plain}
\newtheorem*{thm*}{\protect\theoremname}
\theoremstyle{plain}
\newtheorem{thm}{\protect\theoremname}[section]
\theoremstyle{definition}
\newtheorem{defn}[thm]{\protect\definitionname}
\theoremstyle{plain}
\newtheorem{conjecture}[thm]{\protect\conjecturename}
\theoremstyle{remark}
\newtheorem{rem}[thm]{\protect\remarkname}
\theoremstyle{plain}
\newtheorem{lem}[thm]{\protect\lemmaname}
\theoremstyle{plain}
\newtheorem{prop}[thm]{\protect\propositionname}
\theoremstyle{plain}
\newtheorem{cor}[thm]{\protect\corollaryname}
\theoremstyle{remark}
\newtheorem{notation}[thm]{\protect\notationname}
\theoremstyle{definition}
\newtheorem{example}[thm]{\protect\examplename}
\theoremstyle{definition}
\newtheorem*{condition*}{\protect\conditionname}
\theoremstyle{plain}
\newtheorem{question}[thm]{\protect\questionname}
\providecommand{\conditionname}{Condition}
\providecommand{\conjecturename}{Conjecture}
\providecommand{\corollaryname}{Corollary}
\providecommand{\definitionname}{Definition}
\providecommand{\examplename}{Example}
\providecommand{\lemmaname}{Lemma}
\providecommand{\notationname}{Notation}
\providecommand{\propositionname}{Proposition}
\providecommand{\questionname}{Question}
\providecommand{\remarkname}{Remark}
\providecommand{\theoremname}{Theorem}
\begin{document}
\title{Rank 2 Local Systems, Barsotti-Tate Groups, and Shimura Curves}
\author{Raju Krishnamoorthy}
\begin{abstract}
We construct a descent-of-scalars criterion for $K$-linear abelian
categories. Using advances in the Langlands correspondence due to
Abe, we build a correspondence between certain rank 2 local systems
and certain Barsotti-Tate groups on complete curves over a finite
field. We conjecture that such Barsotti-Tate groups ``come from''
a family of fake elliptic curves. As an application of these ideas,
we provide a criterion for being a Shimura curve over $\mathbb{F}_{q}$.
Along the way we formulate a conjecture on the field-of-coefficients
of certain compatible systems. 

\tableofcontents{}
\end{abstract}

\maketitle

\section{Introduction}

Let $C/\mathbb{F}_{q}$ be a smooth affine curve with compactification
$\overline{C}$ and let $\pi\colon E_{C}\rightarrow C$ be a non-isotrivial
family of elliptic curves. Then $R^{1}\pi_{*}\mathbb{Q}_{l}$ is a
rank 2 $l$-adic local system on $C$ with infinite monodromy around
some $\infty\in\overline{C}\backslash C$, cyclotomic determinant,
and all Frobenius traces in $\mathbb{Q}$. A startling consequence
of Drinfeld's first work on the Langlands correspondence is a converse: 
\begin{thm*}
\label{Theorem:elliptic}(Drinfeld) Let $C/\mathbb{F}_{q}$ be a smooth
affine curve with compactification $\overline{C}$. Let $\mathscr{L}$
be a rank 2 irreducible $\overline{\mathbb{Q}}_{l}$-local system
on $C$ such that 
\begin{itemize}
\item $\mathscr{L}$ has infinite monodromy around some $\infty\in\overline{C}\backslash C$,
\item $\mathscr{L}$ has determinant $\overline{\mathbb{Q}}_{l}(-1)$, and
\item the field of Frobenius traces of $\mathscr{L}$ is $\mathbb{Q}$.
\end{itemize}
Then $\mathscr{L}$ comes from a family of elliptic curves: there
exists a map $f$
\[
\xymatrix{ & \mathcal{E}\ar[d]^{\pi}\\
C\ar[r]^{f} & \mathcal{M}_{1,1}
}
\]
 such that $\mathscr{L}\cong f^{*}(R^{1}\pi_{*}\overline{\mathbb{Q}}_{l})$.
Here $\mathcal{M}_{1,1}$ is the moduli of elliptic curves with universal
elliptic curve $\mathcal{E}$.
\end{thm*}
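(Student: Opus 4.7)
The plan is to extract from $\mathscr{L}$ a compatible system of $\ell'$-adic local systems with integer Frobenius traces via Drinfeld's Langlands correspondence for $GL_{2}$, to interpret the resulting pointwise data via Honda--Tate, and then to assemble an actual family of elliptic curves using the moduli space of elliptic modules (rank-$2$ shtukas).

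First I would apply Drinfeld's global Langlands correspondence for $GL_{2}$ over the function field $K=\mathbb{F}_{q}(C)$: the rank-$2$ irreducible local system $\mathscr{L}$ with determinant $\overline{\mathbb{Q}}_{l}(-1)$ corresponds to a cuspidal automorphic representation $\pi$ of $GL_{2}(\mathbb{A}_{K})$ with prescribed central character. Combined with the hypothesis that the field of Frobenius traces is $\mathbb{Q}$, this yields a compatible system $\{\mathscr{L}_{\ell'}\}_{\ell'\neq p}$ of rank-$2$ $\ell'$-adic local systems on $C$ all sharing the Frobenius traces of $\mathscr{L}$. The determinant condition forces those traces to be Weil numbers of weight $1$, and being rational they are integers. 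Thus at each closed point $x\in|C|$, the Frobenius characteristic polynomial is a monic integer polynomial with roots of absolute value $q_{x}^{1/2}$, so by Honda--Tate it is the characteristic polynomial of Frobenius on $H^{1}$ of an elliptic curve $E_{x}/\kappa(x)$.

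Second, I would assemble the pointwise $E_{x}$ into a coherent family. Drinfeld's moduli space of elliptic modules (equivalently, rank-$2$ shtukas with fixed determinant) is a scheme whose $\ell$-adic cohomology realizes the automorphic forms on $GL_{2}$, and the Galois representation attached to $\pi$ is cut out by the corresponding Hecke eigenclass. Via this realization, $\mathscr{L}$ factors through a map from $C$ to a modular curve for $GL_{2}$, and the rationality of the traces means no additional isogeny/level data is needed; after descent one obtains the desired morphism $f:C\to\mathcal{M}_{1,1}$ with $\mathscr{L}\cong f^{*}(R^{1}\pi_{*}\overline{\mathbb{Q}}_{l})$. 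The infinite monodromy at $\infty$ ensures that $f$ is non-constant and, in particular, rules out the possibility that $\mathscr{L}$ arises from a Gr\"ossencharacter of a quadratic extension (the CM / isotrivial case), since such local systems would have finite local monodromy at every place.

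The hardest step is the second one. Producing the compatible system and checking that the Frobenius traces are integers is relatively soft given Langlands; the serious geometric content lies in passing from the automorphic/Galois-theoretic data to an honest family of elliptic curves. In Drinfeld's original argument this is handled via the shtuka moduli space and its modular interpretation. The generalization pursued in the present paper proceeds along a different route, constructing a Barsotti--Tate group out of an overconvergent $F$-isocrystal companion (Abe) and using the descent criterion developed in the sequel to produce a family of fake elliptic curves in situations where Honda--Tate is insufficient.
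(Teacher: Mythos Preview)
The paper does not prove this theorem; it quotes it as a result of Drinfeld and refers the reader to Snowden--Tsimerman \cite[Proposition 19, Remark 20]{snowden2017constructing} for how to extract it from Drinfeld's first work on Langlands for $GL_{2}$. So there is no in-paper argument to compare against beyond that citation.

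Your first step is essentially right: Langlands for $GL_{2}$ attaches to $\mathscr{L}$ a cuspidal $\pi$, the trace hypothesis puts the Hecke eigenvalues in $\mathbb{Q}$, and purity together with the determinant condition forces the Frobenius polynomial at each closed point $x$ to be the Weil polynomial of an elliptic curve, so Honda--Tate produces an $E_{x}/\kappa(x)$ pointwise.

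The genuine gap is step two. You identify Drinfeld's ``elliptic modules'' (i.e.\ Drinfeld modules) and his shtuka moduli with ``a modular curve for $GL_{2}$'' and then, implicitly, with $\mathcal{M}_{1,1}$; these are different objects, and none of them is $\mathcal{M}_{1,1}$. That $\sigma(\pi)$ is realized in the cohomology of a shtuka space does not by itself give a map $C\to\mathcal{M}_{1,1}$, nor does a collection of pointwise $E_{x}$. The substance of the Drinfeld argument (as isolated by Snowden--Tsimerman) is that the infinite-monodromy condition at $\infty$ forces $\pi_{\infty}$ to be Steinberg, which lets one realize $\sigma(\pi)$ inside $H^{1}$ of an abelian variety over the function field $K=\mathbb{F}_{q}(C)$ (cut out of the Jacobian of a Drinfeld modular curve, or from a shtuka space with a paw at $\infty$); the rationality of the Hecke field then forces that abelian variety to be one-dimensional, hence an elliptic curve over $K$, which spreads out over $C$ and yields $f:C\to\mathcal{M}_{1,1}$. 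Your sentence ``the rationality of the traces means no additional isogeny/level data is needed; after descent one obtains the desired morphism'' is gesturing at the right fact (Hecke field $\mathbb{Q}$ $\Rightarrow$ dimension $1$) but skips the actual construction of the abelian variety, which is precisely the hard geometric input.

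One smaller correction: your closing paragraph says the present paper ``produce[s] a family of fake elliptic curves.'' It does not; it constructs a height-$2$, dimension-$1$ Barsotti--Tate group on $C$ and only \emph{conjectures} (Conjecture~\ref{Conjecture:rank_2_fake_elliptic_curve}) that a family of fake elliptic curves realizing $\mathscr{L}$ exists.
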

See \cite[Proposition 19, Remark 20]{snowden2018constructing} for
how to recover this result from Drinfeld's work.
\begin{defn}
\label{Definition:moduli_fake_elliptic_curves}Let $D$ be an indefinite
non-split quaternion algebra over $\mathbb{Q}$ of discriminant $d$
and let $\mathcal{O}_{D}$ be a fixed maximal order. Let $k$ be a
field with $\textrm{char}(k)\nmid d$. A \emph{quaternionic abelian}
\emph{surface }is a pair $(A,i)$ of an abelian surface $A/k$ together
with an injective ring homomorphism $i:\mathcal{O}_{D}\rightarrow\text{End}_{k}(A)$.
\end{defn}

$D$ has a canonical involution, which we denote $\iota$. Pick $t\in\mathcal{O}_{D}$
with $t^{2}=-d$. Then there is another associated involution $*$
on $D$:
\[
x^{*}:=t^{-1}x^{\iota}t
\]
There is a unique principal polarization $\lambda$ on $A$ such that
the Rosati involution restricts to $*$ on $\mathcal{O}_{D}$. We
refer to the triple $(A,\lambda,i)$ as a \emph{fake elliptic curve},
suppressing the dependence on $t\in\mathcal{O}_{D}$. 

Just as one can construct a modular curve parameterizing elliptic
curves, there is a Shimura curve $X^{D}$ parameterizing fake elliptic
curves with multiplication by $\mathcal{O}_{D}$. Over the complex
numbers, these are compact hyperbolic curves. Explicitly, if one chooses
an isomorphism $D\otimes\mathbb{R}\cong M_{2\times2}(\mathbb{R})$,
consider the image of $\Gamma=\mathcal{O}_{D}^{1}$ of elements of
$\mathcal{O}_{D}^{*}$ of norm 1 (for the standard norm on $\mathcal{O}_{D}$)
inside of $SL(2,\mathbb{R})$. \textbf{$\Gamma$ }acts properly discontinuously
and cocompactly on $\mathbb{H}$. The quotient $X^{D}=[\mathbb{H}/\Gamma]$
is the complex Shimura curve associated to $\mathcal{O}_{D}$. In
fact, $X^{D}$ has a canonical integral model and may therefore be
reduced modulo $p$ for almost all $p$ \cite{buzzard1997integral}.
In analogy of Theorem \ref{Theorem:elliptic}, we pose the following
conjecture.
\begin{conjecture}
\label{Conjecture:rank_2_fake_elliptic_curve}Let $C/\mathbb{F}_{q}$
be a smooth projective curve. Let $\mathscr{L}$ be a rank 2 irreducible
$\overline{\mathbb{Q}}_{l}$-local system on $C$ such that 
\begin{itemize}
\item $\mathscr{L}$ has infinite geometric monodromy,
\item $\mathscr{L}$ has determinant $\overline{\mathbb{Q}}_{l}(-1)$, and
\item the field of Frobenius traces of $\mathscr{L}$ is $\mathbb{Q}$.
\end{itemize}
Then, $\mathscr{L}$ comes from a family of fake elliptic curves:
there exists an indefinite quaternion algebra $D/\mathbb{Q}$, a moduli
space of fake elliptic curves $X^{D}$with universal family $\mathcal{A}\xrightarrow{\pi}X^{D}$,
and a map $f$:
\[
\xymatrix{ & \mathcal{A}\ar[d]^{\pi}\\
C\ar[r]^{f} & X^{D}
}
\]
 such that $\mathscr{L}^{\oplus2}\cong f^{*}(R^{1}\pi_{*}\overline{\mathbb{Q}}_{l})$.
\end{conjecture}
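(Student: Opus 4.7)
The plan is to follow the general shape of Drinfeld's argument for elliptic curves, but where Drinfeld could directly invoke a moduli-theoretic construction at a cusp, I will have to build the universal $p$-divisible object by hand (since $C$ is proper) and then algebraize. First, I would feed $\mathscr{L}$ into the Langlands correspondence for $\mathrm{GL}_2$ over function fields: by the results of Drinfeld, L.~Lafforgue, and the more recent work of Abe, an irreducible rank $2$ local system with algebraic Frobenius eigenvalues sits in a full compatible system consisting of one $\ell'$-adic companion $\mathscr{L}_{\ell'}$ for each prime $\ell'\neq p$ and, crucially, a crystalline companion $\mathscr{M}$ that is an overconvergent $F$-isocrystal on $C$. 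The hypothesis that the field of Frobenius traces is $\mathbb{Q}$ ensures that the coefficients of every companion may be taken in $\mathbb{Q}_{\ell'}$ (resp.\ $\mathbb{Q}_p$), and the determinant condition pins down the central character.

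Second, I would promote the crystalline companion $\mathscr{M}$ to a Dieudonn\'e-theoretic object. Since $\mathscr{L}$ has infinite image and weight $1$, $\mathscr{M}$ should be pure of weight $1$ with Newton and Hodge polygons allowing a slope filtration of the form $(0,1)$. Combining this with the compatibility with the $\ell$-adic companions, one expects to associate to $\mathscr{M}$ an $F$-crystal that is the (contravariant) Dieudonn\'e crystal of a height $2$ Barsotti--Tate group $G/C$. This is where the descent criterion for $K$-linear abelian categories announced in the abstract intervenes: the quaternionic symmetries coming from the pairing $\mathscr{L}\otimes\mathscr{L}\to \overline{\mathbb{Q}}_\ell(-1)$ and the self-duality of each companion descend the naive $\ell$-adic and crystalline data to a Barsotti--Tate group equipped with an action of $\mathcal{O}_D$ for some indefinite quaternion algebra $D/\mathbb{Q}$ of discriminant prime to $p$, together with a compatible principal polarization; the requirement that $\mathscr{L}^{\oplus 2}$ appears, rather than $\mathscr{L}$ itself, reflects that a fake elliptic curve carries two copies of its ``true'' rank $2$ motive.

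Third, having a $p$-divisible group with $\mathcal{O}_D$-action and polarization on $C$, I would invoke the conjectural algebraization statement alluded to in the abstract: such a Barsotti--Tate group should come from a family of fake elliptic curves. Given this, the moduli interpretation of $X^D$ produces the desired classifying map $f\colon C\to X^D$, and one reads off $f^{*}(R^1\pi_*\mathbb{Q}_\ell)\cong \mathscr{L}^{\oplus 2}$ by tracing through the companions at an auxiliary prime $\ell'\neq p$ and appealing to Chebotarev density, which propagates the isomorphism to all $\ell$.

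The decisive obstacle is the last step: the passage from a polarized Barsotti--Tate group with quaternionic multiplication to an honest family of abelian surfaces. In the affine setting treated by Drinfeld, one can compactify toward a cusp with infinite local monodromy and use Grothendieck--Messing plus Serre--Tate to algebraize in a neighborhood of a boundary point, then globalize. Here $C$ is projective and all of $\mathscr{L}$'s monodromy is ``interior'', so no such cusp is available, and the $\mathcal{O}_D$-structure must be used as a rigidifying substitute. Producing this algebraization unconditionally is, to my knowledge, exactly the content of the conjecture on Barsotti--Tate groups flagged in the abstract, and every other step above is meant to be made rigorous modulo that single input.
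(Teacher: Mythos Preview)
The statement is a \emph{conjecture}; the paper does not prove it. What the paper proves (Theorem~\ref{Theorem:p-div_companion_versal-1}) is only the correspondence between such $\mathscr{L}$ and certain height-$2$, dimension-$1$ Barsotti--Tate groups on $C$, offered as evidence. You correctly identify the algebraization of the BT group as the decisive missing step, and your overall plan is in the right spirit.

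There is, however, a genuine error in your second step. You assert that ``the hypothesis that the field of Frobenius traces is $\mathbb{Q}$ ensures that the coefficients of every companion may be taken in $\mathbb{Q}_{\ell'}$ (resp.\ $\mathbb{Q}_p$)''. This is false: there is a Brauer obstruction to descending coefficients, and it need not vanish. Example~\ref{Example:Fake_elliptic_curves_brauer} makes this explicit---on a Shimura curve $X^D$ all Frobenius traces are in $\mathbb{Q}$, yet at primes $\ell'$ where $D$ ramifies the companion \emph{cannot} be defined over $\mathbb{Q}_{\ell'}$. A substantial portion of the paper (Sections~\ref{Section:2_cocycle_obstruction_for_descent}--\ref{Section:F_isocrystals_on_finite_fields}, culminating in Lemma~\ref{Lemma:Main_Isocrystal_Descent} and Corollary~\ref{Corollary:rank2_p-adic_companion_Qp}) is devoted to showing that the \emph{crystalline} companion nonetheless descends to $\mathbb{Q}_p$ after a half-twist and a base-field extension, exploiting the slope filtration at a non-isoclinic point; there is no analogous mechanism on the $\ell$-adic side. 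Relatedly, the paper does not build an $\mathcal{O}_D$-action on a height-$4$ BT group as you propose: it produces a height-$2$, dimension-$1$ BT group $\mathscr{G}$ with no quaternionic structure, and the algebra $D$ is not visible at this stage---it would only emerge from the (conjectural) abelian surface. Your suggestion of reading off $D$ from ``quaternionic symmetries'' of the pairing $\mathscr{L}\otimes\mathscr{L}\to\overline{\mathbb{Q}}_\ell(-1)$ does not work: that pairing exists for any rank-$2$ local system with cyclotomic determinant and carries no information about $D$.
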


In this article we prove the following, which perhaps provides some
evidence for \ref{Conjecture:rank_2_fake_elliptic_curve}. 
\begin{thm*}
(Theorem \ref{Theorem:p-div_companion_versal-1}) Let $C/\mathbb{F}_{q}$
be a smooth, geometrically connected, proper curve with $q$ a square.
There is a natural bijection between the following two sets
\[
\left\{ \begin{alignedat}{1} & \overline{\mathbb{Q}}_{l}\text{-local systems }\mathscr{L}\text{ on }C\text{ such that}\\
 & \bullet\mathscr{L}\text{ is irreducible of rank 2}\\
 & \bullet\mathscr{L}\text{ has trivial determinant}\\
 & \bullet\text{The Frobenius traces are in }\mathbb{Q}\\
 & \bullet\mathscr{L}\text{ has infinite image,}\\
 & \text{up to isomorphism}
\end{alignedat}
\right\} \longleftrightarrow\left\{ \begin{aligned} & p\text{-divisible groups }\mbox{\ensuremath{\mathscr{G}}}\text{ on }C\text{ }\text{ such that }\\
 & \bullet\mathscr{G}\text{ has height 2 and dimension 1}\\
 & \bullet\mathscr{G}\text{ is generically versally deformed}\\
 & \bullet\mathbb{D}(\mathscr{G})\text{ has all Frobenius traces in }\mathbb{Q}\\
 & \bullet\mathscr{G}\text{ has ordinary and supersingular points,}\\
 & \text{up to isomorphism}
\end{aligned}
\right\} 
\]
such that if $\mathscr{L}$ corresponds to $\mathscr{G}$, then $\mathscr{L}(-1/2)$
is compatible with the $F$-isocrystal $\mathbb{D}(\mathscr{G})\otimes\mathbb{Q}$.
\end{thm*}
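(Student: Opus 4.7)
The plan is to build the correspondence in both directions using Abe's theorem on crystalline companions, the descent criterion for $K$-linear abelian categories developed earlier in the paper, and the equivalence between appropriate Dieudonn\'e crystals and $p$-divisible groups; mutual inverseness will follow from the imposed compatibility of Frobenius traces via Chebotarev density and Brauer-Nesbitt.

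In the forward direction, given $\mathscr{L}$, I first form the half-Tate twist $\mathscr{L}(-1/2)$, which is well-defined because $q$ is a square; this is a rank $2$ irreducible local system, pure of weight $1$, with Frobenius traces in $\mathbb{Q}$. Abe's companions theorem produces a compatible overconvergent $F$-isocrystal $\mathscr{E}$ on $C$, initially defined over a finite extension of $\mathbb{Q}_p$. Applying the paper's descent criterion, the rationality of Frobenius traces forces $\mathscr{E}$ to descend to $\mathbb{Q}_p$-coefficients. Purity and compatibility then pin the Newton polygon of $\mathscr{E}$: generic slopes $(0,1)$, with jumps to $(1/2,1/2)$ at the supersingular locus. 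One recognizes $\mathscr{E}$ as $\mathbb{D}(\mathscr{G}) \otimes \mathbb{Q}$ for a unique $p$-divisible group $\mathscr{G}$ of height $2$ and dimension $1$, using the integrality enforced by the slope bound in $[0,1]$ together with the classical Dieudonn\'e equivalence; generic versality of $\mathscr{G}$ is equivalent to non-triviality of the Kodaira-Spencer map, equivalently to irreducibility of $\mathscr{E}$ and hence of $\mathscr{L}$.

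In the reverse direction, given $\mathscr{G}$, the rational Dieudonn\'e isocrystal $\mathbb{D}(\mathscr{G}) \otimes \mathbb{Q}$ is a rank $2$ overconvergent $F$-isocrystal on the projective curve $C$, with Frobenius traces in $\mathbb{Q}$. Abe's theorem yields an $l$-adic companion $\mathscr{L}'$; set $\mathscr{L} := \mathscr{L}'(1/2)$. Rank $2$ follows from height $2$; trivial determinant follows from the generic slope sum $1$ corrected by the Tate twist; irreducibility follows from generic versality; and infinite image follows because finite image of $\mathscr{L}$ would produce a constant Newton polygon on the crystalline side, contradicting the simultaneous presence of ordinary and supersingular points. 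Mutual inverseness of the two constructions then reduces to the fact that a rank $2$ irreducible local system on $C$ is determined up to isomorphism by its trace function on closed points.

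The hardest step, requiring the bulk of the preceding machinery in the paper, is the forward passage from the overconvergent $F$-isocrystal $\mathscr{E}$ to an actual $p$-divisible group $\mathscr{G}$: Abe's theorem only outputs isocrystals, and extracting an integral Dieudonn\'e lattice requires the descent to $\mathbb{Q}_p$, the Newton slope bound in $[0,1]$, and effectivity of Frobenius after the appropriate twist. Ensuring the reconstructed $\mathscr{G}$ is unique rather than merely determined up to isogeny uses the rigidification coming from the height $2$, dimension $1$ normalization.
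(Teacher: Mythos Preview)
Your overall architecture matches the paper's, but there is a genuine gap in the forward direction where you pass from the isocrystal $\mathscr{E}$ to the $p$-divisible group $\mathscr{G}$.

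You write that one ``recognizes $\mathscr{E}$ as $\mathbb{D}(\mathscr{G})\otimes\mathbb{Q}$ for a \emph{unique} $p$-divisible group $\mathscr{G}$'' via the classical Dieudonn\'e equivalence, and later that uniqueness comes from ``the rigidification coming from the height $2$, dimension $1$ normalization.'' This is not correct: the Dieudonn\'e \emph{isocrystal} only determines $\mathscr{G}$ up to isogeny, and there are infinitely many non-isomorphic height $2$, dimension $1$ BT groups in a given isogeny class. For instance, if $\mathscr{G}$ works then so does any Frobenius twist $\mathscr{G}^{(p^n)}$, which has the same isocrystal but trivial Kodaira--Spencer map. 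Relatedly, your claim that generic versality of $\mathscr{G}$ is ``equivalent to irreducibility of $\mathscr{E}$'' is false in this direction: irreducibility of $\mathscr{E}$ is a property of the isogeny class, whereas generic versality distinguishes members of that class.

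The paper handles this with two extra ingredients you are missing. First, Xia's Frobenius untwisting lemma: if $KS\equiv 0$ then $\mathscr{G}\cong\mathscr{H}^{(p)}$ for some $\mathscr{H}$, so one can repeatedly untwist, and the process terminates precisely because $\mathscr{E}$ has both ordinary and supersingular points (so the map to the deformation space cannot be identically zero forever). This produces \emph{some} generically versally deformed $\mathscr{G}$ in the isogeny class. Second, a separate uniqueness lemma (Lemma~\ref{Lemma:unique_generically_deformed_BT_group}): any two generically versally deformed, generically ordinary, height $2$, dimension $1$ BT groups with isomorphic Dieudonn\'e isocrystals are isomorphic. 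The argument analyzes the kernel of a minimal isogeny against the connected-\'etale sequence of $\mathscr{G}[p]$. These two steps are exactly what pins down $\mathscr{G}$ on the nose rather than up to isogeny, and they are not consequences of height/dimension normalization or of irreducibility of $\mathscr{E}$ alone.

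A smaller point: in the reverse direction, irreducibility of $\mathbb{D}(\mathscr{G})\otimes\mathbb{Q}$ follows from the coexistence of ordinary and supersingular points (via the slope filtration), not from generic versality. Your argument for infinite image via nonconstant Newton polygon is fine and matches the paper.
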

Theorem \ref{Theorem:p-div_companion_versal-1} arose from the following
question: is there a ``purely group-theoretic'' characterization
of proper Shimura curves over $\mathbb{F}_{q}$? As an application
of our techniques, we have the following criterion, motivated largely
by \cite{margulis1991,mochizuki1996ordinary,mochizuki1998correspondences,xia2013deformation}.
\begin{thm*}
(Theorem \ref{Theorem:2_pullbacks_L_isomorphic_versally}) Let $X\overset{f}{\leftarrow}Z\overset{g}{\rightarrow}X$
be an étale correspondence of smooth, geometrically connected, proper
curves without a core over $\mathbb{F}_{q}$ with $q$ a square. Let
$\mathscr{L}$ be a $\overline{\mathbb{Q}}_{l}$-local system on $X$
as in Theorem \ref{Theorem:p-div_companion_versal-1} with $f^{*}\mathscr{L}\cong g^{*}\mathscr{L}$
as local systems on $Z$. Suppose the $\mathscr{G}\rightarrow X$
constructed in Theorem \ref{Theorem:p-div_companion_versal-1} is
everywhere versally deformed. Then $X$ is the reduction modulo $p$
of a Shimura curve.
\end{thm*}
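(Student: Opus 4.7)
The plan has three stages: first transport the correspondence from local systems to $p$-divisible groups via the bijection of Theorem \ref{Theorem:p-div_companion_versal-1}, then exploit the everywhere-versal hypothesis to equip $X$ with rigid formal coordinates, and finally invoke the no-core hypothesis in the spirit of Mochizuki \cite{mochizuki1998correspondences} to extract the Shimura curve structure. Since the bijection of Theorem \ref{Theorem:p-div_companion_versal-1} is built through crystalline companions and Dieudonn\'e theory, both of which commute with finite \'etale pullback, the hypothesis $f^{*}\mathscr{L}\cong g^{*}\mathscr{L}$ immediately translates into an isomorphism $f^{*}\mathscr{G}\cong g^{*}\mathscr{G}$ of $p$-divisible groups on $Z$.

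Second, the assumption that $\mathscr{G}$ is everywhere versally deformed means that for every geometric point $x\in X$ the complete local ring $\widehat{\mathcal{O}}_{X,x}$ is the universal deformation ring of $\mathscr{G}_{x}$. Hence the formal neighborhood of each ordinary point is a Serre--Tate disk and that of each supersingular point is a Lubin--Tate disk, both canonically coordinatized by $\mathscr{G}$. The isomorphism from the first step forces $f$ and $g$ to be local isomorphisms of formal disks preserving these canonical $p$-adic structures, producing an extremely rigid local geometry on $X$ compatible with the self-correspondence.

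Third, the no-core hypothesis implies that iterating the correspondence produces an infinite family of pairwise distinct \'etale self-correspondences $(f_{n},g_{n}):Z_{n}\to X\times X$, each continuing to satisfy $f_{n}^{*}\mathscr{G}\cong g_{n}^{*}\mathscr{G}$ by naturality. Following Mochizuki, this infinite collection of correspondences compatible with a canonical $p$-divisible structure plays the role of a Hecke algebra. Combined with the canonical formal coordinates from the previous step, this yields an incipient $p$-adic uniformization: using Serre--Tate canonical lifts at ordinary points and Grothendieck--Messing at supersingular points, one constructs a canonical lift of $(X,\mathscr{G})$ to characteristic zero on which $\mathscr{G}\oplus\mathscr{G}$ acquires an action by an order $\mathcal{O}_{D}$ in an indefinite quaternion algebra $D/\mathbb{Q}$ extracted from the correspondence algebra, identifying $X$ and $Z$ with reductions of Shimura curves $X^{D}$.

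The hardest step will be this final identification: extracting the quaternion algebra $D$ canonically from the correspondence data, and showing that the characteristic-zero lift actually coincides with $X^{D}$. This is essentially a $p$-adic uniformization in the vein of \v{C}erednik--Drinfeld. One must promote the Mochizuki-style correspondence algebra to a genuine quaternionic endomorphism algebra on a family of abelian surfaces over the canonical lift, which requires globalizing the purely local formal data of the second step to an everywhere-defined $\mathcal{O}_{D}$-action on $\mathscr{G}\oplus\mathscr{G}$ that is simultaneously compatible with every correspondence in the tower; pinning down precisely which $D$ arises from the Frobenius traces of $\mathscr{L}$ is the delicate final ingredient.
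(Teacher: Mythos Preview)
Your first stage is essentially correct and matches the paper: the uniqueness part of Theorem~\ref{Theorem:p-div_companion_versal-1} (more precisely, Lemma~\ref{Lemma:unique_generically_deformed_BT_group}) gives $f^{*}\mathscr{G}\cong g^{*}\mathscr{G}$, since both pullbacks are generically versally deformed (being \'etale pullbacks of an everywhere versally deformed group) and have isomorphic Dieudonn\'e isocrystals.

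Your third stage, however, is a genuine detour that does not close. You are attempting to reconstruct a specific quaternion algebra $D$ from the correspondence tower and then to produce an $\mathcal{O}_{D}$-action on $\mathscr{G}\oplus\mathscr{G}$ over a canonical lift. None of this is needed, and as written it is not a proof: there is no mechanism given for why the ``correspondence algebra'' should be a quaternion algebra, nor for why it should act on the $p$-divisible group, nor for why the resulting structure should globalize to an abelian surface. Mochizuki's theorem in characteristic $0$ yields arithmetic (Shimura) curves in a much broader sense than just the $X^{D}$ of Definition~\ref{Definition:moduli_fake_elliptic_curves}, so aiming for that specific target is already too restrictive.

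The paper's route after the first step is far shorter. By Xia's Theorem~\ref{Theorem:xia_lift}, the pair $(X,\mathscr{G})$ has a \emph{unique} lift $(\tilde{X},\tilde{\mathscr{G}})$ to $W(k)$; this is a global statement, not a patching of local Serre--Tate and Lubin--Tate disks. Since $f$ and $g$ are \'etale they lift uniquely along $\tilde{X}$, giving two a priori different lifts $\tilde{Z}_{f}$, $\tilde{Z}_{g}$ of $Z$. But $(\tilde{Z}_{f},f^{*}\tilde{\mathscr{G}})$ and $(\tilde{Z}_{g},g^{*}\tilde{\mathscr{G}})$ are both lifts of $(Z,f^{*}\mathscr{G})\cong(Z,g^{*}\mathscr{G})$, so by the uniqueness in Xia's theorem $\tilde{Z}_{f}\cong\tilde{Z}_{g}$. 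The whole correspondence therefore lifts to $W(k)$; by Lemma~\ref{Lemma:lift_no_core} the lifted correspondence is still \'etale without a core, and Mochizuki's theorem finishes the argument. This is the content of Corollary~\ref{Corollary:p-div_shimura}. You should replace your second and third stages with this lifting argument.
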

Note that the criterion only involves varieties in characteristic
$p$ and makes no mention of a ``family of abelian varieties''. The
lifting process involved in Theorem \ref{Theorem:2_pullbacks_L_isomorphic_versally}
is due to Xia \cite{xia2013deformation}, and it gives one of the
\emph{canonical lifts} of Mochizuki \cite{mochizuki1996ordinary}.
In \cite[Problem 10]{oortopen}, Mochizuki asks when these canonical
lifts are defined over $\overline{\mathbb{Q}}$. The techniques of
Section \ref{Section:Shimura_Curves} give a criterion: see Corollary
\ref{Corollary:p-div_shimura} and Remark \ref{Remark:mochizuki}.

In joint work-in-progress with Mao Sheng, we have a strategy to prove
that in the context of Theorem \ref{Theorem:2_pullbacks_L_isomorphic_versally},
$\mathscr{L}$ indeed comes from a family of fake elliptic curves,
verifying a very special case of Conjecture \ref{Conjecture:rank_2_fake_elliptic_curve}.
More precisely, the method should imply that $X$ is the reduction
modulo $p$ of a moduli space of fake elliptic curves and $\mathscr{L}$
is the induced universal system. See Remark \ref{Remark:strategy_fake_elliptic_curves}
for a detailed outline of the strategy, which uses $p$-adic nonabelian
Hodge theory.

Finally, as a basic observation from our analysis, we prove the following:
\begin{thm*}
(Theorem \ref{Theorem:F-Isoc-Q-Finite-Monodromy}) Let $X$ be a smooth,
geometrically connected curve over $\mathbb{F}_{q}$. Let $\mathcal{E}\in\textbf{F-Isoc}^{\dagger}(X)$
be an overconvergent $F$-isocrystal on $X$ with coefficients in
$\mathbb{Q}_{p}$ that is rank 2, absolutely irreducible, and has
trivial determinant. Suppose further that the field generated by Frobenius
traces on $\mathcal{E}$ is $\mathbb{Q}$. Then $\mathcal{E}$ has
finite monodromy.
\end{thm*}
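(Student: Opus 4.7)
\emph{Proof plan.} The plan is to pass to an $\ell$-adic companion of $\mathcal{E}$ via Abe's recent extension of the Langlands correspondence to overconvergent $F$-isocrystals on curves, apply purity together with a classical Kronecker--Niven style analysis to bound the Frobenius trace function, and finally transfer the resulting finite monodromy back to $\mathcal{E}$.

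First, by Abe's companion theorem, for any $\ell\neq p$ there exists a rank $2$ absolutely irreducible lisse $\overline{\mathbb{Q}}_\ell$-sheaf $\mathscr{L}$ on $X$ whose Frobenius characteristic polynomial at every closed point matches that of $\mathcal{E}$; in particular $\mathscr{L}$ has trivial determinant and Frobenius traces in $\mathbb{Q}$. By purity for irreducible rank $2$ lisse sheaves on curves (Drinfeld, L.~Lafforgue), $\mathscr{L}$ is pure of some weight, which the trivial determinant forces to be $0$. Hence at each closed point $x$ the eigenvalues $\alpha_x,\beta_x$ are algebraic integers satisfying
\[
\alpha_x\beta_x=1,\quad \alpha_x+\beta_x=a_x\in\mathbb{Q},\quad |\iota(\alpha_x)|=|\iota(\beta_x)|=1
\]
for every complex embedding $\iota$.

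The first two conditions place $\alpha_x$ in an at most quadratic extension of $\mathbb{Q}$. Kronecker's theorem then forces $\alpha_x$ to be a root of unity, and since the only roots of unity lying in a field of degree $\le 2$ over $\mathbb{Q}$ have order in $\{1,2,3,4,6\}$, one reads off $a_x\in\{-2,-1,0,1,2\}$. Let $G$ be the Zariski closure in $GL_{2,\overline{\mathbb{Q}}_\ell}$ of the image of the monodromy representation of $\mathscr{L}$. By Chebotarev density the regular function $\mathrm{tr}$ on $G$ takes values in $\{-2,-1,0,1,2\}$ on a dense subset, hence everywhere; on the connected identity component $G^0$ it is therefore constant with value $2=\mathrm{tr}(1)$. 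Combined with $\det\equiv 1$ on $G$, every element of $G^0$ has characteristic polynomial $(T-1)^2$ and hence is unipotent. As $\mathscr{L}$ is absolutely irreducible, $G$ cannot be contained in a Borel, so $G^0=1$ and $G$ is finite; thus $\mathscr{L}$ has finite monodromy.

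Finally, choose a finite Galois \'etale cover $Y\to X$ trivializing $\mathscr{L}$. Companions are compatible with finite \'etale pullback, so $\mathcal{E}|_Y$ is a companion to the trivial local system on $Y$; by uniqueness of companions for absolutely irreducible objects, $\mathcal{E}|_Y$ must be the trivial overconvergent $F$-isocrystal. This is exactly the statement that $\mathcal{E}$ has finite monodromy.

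\emph{Main obstacle.} Most of the structural work is outsourced to Abe's companion theorem and to purity; the Kronecker--Niven step and the Zariski-closure argument are elementary. The one genuinely delicate point is the final descent: verifying that a companion of the trivial local system is itself trivial as an overconvergent $F$-isocrystal. This follows from unicity in the companion correspondence, but is the only step where a careful check is required.
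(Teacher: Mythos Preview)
Your argument has a genuine gap, and it is precisely the step that the paper's proof is designed to handle.

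You assert that ``the eigenvalues $\alpha_x,\beta_x$ are algebraic integers'' and then invoke Kronecker's theorem. Purity only gives that $\alpha_x$ is an algebraic \emph{number} all of whose archimedean absolute values are $1$; it says nothing about integrality. Part (3) of Deligne's conjecture (Lafforgue) gives $\ell$-integrality of $\alpha_x$ for every $\ell\neq p$, so $a_x=\alpha_x+\beta_x\in\mathbb{Z}[1/p]$, but $p$-integrality is exactly what is at stake: $v_p(\alpha_x)=0$ is equivalent to the statement that $\mathcal{E}_x$ has slopes $(0,0)$. Nothing in your argument rules out, say, $a_x=1/p$, which satisfies $|a_x|\le 2$ and lies in $\mathbb{Z}[1/p]$. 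Kronecker's theorem fails for non-integers (e.g.\ $\tfrac{3}{5}+\tfrac{4}{5}i$ has absolute value $1$ but is not a root of unity).

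Crucially, you never use the hypothesis that $\mathcal{E}$ has coefficients in $\mathbb{Q}_p$. This hypothesis is not decorative: dropping it turns the statement into the open Conjecture~\ref{Conjecture:complete_curve_finite_monodromy}. The paper uses it as follows. With $\mathbb{Q}_p$-coefficients, a slope $\tfrac{s}{r}$ in lowest terms must occur with multiplicity a multiple of $r$ (from the isoclinic decomposition and the rank of the simple object $E^{s/r}$). For a rank~$2$ object with slopes $(s,-s)$, the slope bounds force $|s|\le\tfrac{1}{2}$, so $s\in\mathbb{Z}$ is impossible unless $s=0$, while a fractional $s$ would have to occur twice, forcing $s=-s$. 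Hence $\mathcal{E}_x$ is isoclinic of slope $0$ at every $x$, i.e.\ the eigenvalues are $p$-adic units and therefore algebraic integers. From here the paper concludes via Lemma~\ref{Lemma:supersingular_finite_image} (eigenvalues are roots of unity of bounded degree, so finitely many traces, so finite image by Burnside); your Zariski-closure argument is an acceptable variant of that last step, but only once the integrality is in hand.

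In short: the missing $p$-adic input---that $\mathcal{E}$ is absolutely unit-root---is the whole content of the theorem, and it cannot be extracted from the $\ell$-adic side alone.
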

We make some remarks about Theorem \ref{Theorem:p-div_companion_versal-1}.
\begin{rem}
The condition that $q$ is a square is to ensure that the character
$\overline{\mathbb{Q}}_{l}(1/2)$ on $\text{Gal}(\mathbb{F}/\mathbb{F}_{q})$
has Frobenius acting as an integer. Therefore $\mathscr{L}(-1/2)$
also has Frobenius traces in $\mathbb{Q}$ and determinant $\overline{\mathbb{Q}}_{l}(-1)$,
exactly as in Conjecture \ref{Conjecture:rank_2_fake_elliptic_curve}.
\end{rem}

\begin{rem}
Drinfeld's work on unramified $GL_{2}$ Langlands implies that if
$\mathscr{L}$ is as in Conjecture \ref{Conjecture:rank_2_fake_elliptic_curve},
then there exists a family of surfaces over a Zariski open $U\subset C\times C$
\[
g:\mathcal{S}\rightarrow U
\]
such that $(\mathscr{L}\boxtimes\mathscr{L}^{*})|_{U}$ is a direct
summand of $R^{2}g_{*}\overline{\mathbb{Q}}_{l}$. Our conjecture
is that $\mathscr{L}$ may be realized inside a ``motive of weight
1'' over $C$. (See Question \ref{Question:algebraize_G} for a related
question.) 
\end{rem}

\begin{rem}
The principle input in Theorem \ref{Theorem:p-div_companion_versal-1}
is Abe's construction of $p$-adic companions on curves over a finite
field. Given this, there are two new ingredients.
\begin{enumerate}
\item The $p$-adic companion $\mathcal{E}\in\textbf{F-Isoc}(X)_{\overline{\mathbb{Q}}_{p}}$
of $\mathscr{L}$ may be descended to $\mathbb{Q}_{p}$; in other
words, a certain Brauer obstruction ``automatically'' vanishes.
This is contained in Lemma \ref{Lemma:Main_Isocrystal_Descent} and
Proposition \ref{Proposition:Rank_2_twisted_descent}.
\item There is a \emph{canonical lattice} inside of $\mathcal{E}$; this
is characterized by the associated Barsotti-Tate group being \emph{generically
versally deformed}. This is contained in Lemma \ref{Lemma:unique_generically_deformed_BT_group}.
\end{enumerate}
\end{rem}

\begin{rem}
In particular, given $\mathscr{L}$ as in Theorem \ref{Theorem:p-div_companion_versal-1},
there is a natural effective divisor (possibly empty) $R$ on $C$,
supported at those $c\in|C|$ over which $\mathscr{G}$ is not versally
deformed. If Conjecture \ref{Conjecture:rank_2_fake_elliptic_curve}
is true, $R$ is the ramification divisor of a generically separable
map
\[
C\rightarrow X^{D}
\]
realizing $\mathscr{L}$ as coming from a family of fake elliptic
curves. We do not know how to construct this effective divisor directly
either from $\mathscr{L}$ or its $p$-adic companion $\mathcal{E}$
without constructing $\mathscr{G}$.
\end{rem}

We now briefly discuss the sections. The goal of Sections \ref{Section:Galois_descent_for_abelian_categories}-\ref{Section:Kodaira_spencer}
is to prove Theorem \ref{Theorem:p-div_companion_versal-1}.
\begin{itemize}
\item Section \ref{Section:Galois_descent_for_abelian_categories} sketches
extension-of-scalars and Galois descent for $K$-linear abelian categories.
\item Section \ref{Section:2_cocycle_obstruction_for_descent} constructs
a Brauer-class obstruction for descending certain objects in an abelian
$K$-linear category.
\item Section \ref{Section:F_crystals} briefly reviews background on $F$-crystals
and $F$-isocrystals. Many of the statements are surely well-known,
but we couldn't always find a reference.
\item Section \ref{Section:Compatible_systems} is a brief discussion of
coefficient objects and several results of Abe and Lafforgue.
\item Section \ref{Section:F_isocrystals_local_systems_on_curves} contains
the application of the descent criterion above in Proposition \ref{Proposition:Rank_2_twisted_descent}.
\item Section \ref{Section:Kodaira_spencer} discusses the deformation theory
of BT groups. We prove that there is a unique such $\mathscr{G}$
that is \emph{generically versally deformed, }thereby proving Theorem
\ref{Theorem:p-div_companion_versal-1}.
\end{itemize}
BT groups are generally non-algebraic, but here we can show there
are only finitely many that occur in Theorem \ref{Theorem:p-div_companion_versal-1}
via the Langlands correspondence. In Section \ref{Section:Algebraization_and_finite_monodromy},
we speculate on when height 2, dimension 1 BT groups $\mathscr{G}$
come from a family of abelian varieties. 

Finally, in Section \ref{Section:Shimura_Curves}, we discuss applications
to ``characterizing'' certain Shimura curves over $\mathbb{F}_{q}$.

\textbf{Acknowledgements.} This work is based on Chapter 3 of my PhD
thesis at Columbia University. I am very grateful to Johan de Jong,
my former thesis advisor, for guiding this project and for countless
inspiring discussions. Ching-Li Chai read my thesis very carefully
and provided many valuable corrections and remarks; I thank him. I
also thank Marco d'Addezio, Hélène Esnault, and Ambrus Pál for discussions
on the subject of this article. I thank the anonymous referee, who
provided helpful comments on the article. This work was partially
funded by an NSF postdoctoral fellowship, Grant No. DMS-1605825.

\section{Conventions, Notation, and Terminology}

For convenience, we explicitly state conventions and notations. These
are in full force unless otherwise stated.
\begin{enumerate}
\item A \emph{curve} $C/k$ is a separated geometrically integral scheme
of dimension 1 over $k$.
\item A \emph{variety} $X/k$ is a geometrically integral $k$-scheme of
finite type.
\item A morphism of curves $X\rightarrow Y$ over $k$ is a morphism of
$k$-schemes that is non-constant, finite, and generically separable.
\item $p$ is a prime number and $q=p^{d}$.
\item $\mathbb{F}$ is a fixed algebraic closure of $\mathbb{F}_{p}$.
\item If $k$ is perfect, $W(k)$ denotes the Witt vectors and $\sigma$
the canonical lift of Frobenius. $K(k)\cong W(k)\otimes\mathbb{Q}$. 
\item All $p$-adic valuations are normalized such that $v_{p}(p)=1$. 
\item If $\mathcal{C}$ is a $K$-linear category and $L/K$ is an algebraic
extension, we denote by $\mathcal{C}_{L}$ the base-changed category.
\item Let $X/k$ be a smooth variety over a perfect field. Then the category
$\textbf{F-Isoc}^{\dagger}(X)$ is the $\mathbb{Q}_{p}$-linear Tannakian
category of \emph{overconvergent $F$-isocrystals on X.}
\item If $\mathscr{G}\rightarrow X$ is a $p$-divisible group, we denote
by $\mathbb{D}(\mathscr{G})$ the \emph{contravariant} Dieudonné crystal
attached to $\mathscr{G}$. We use the phrases $p$-divisible group
and Barsotti-Tate group interchangably. 
\end{enumerate}

\section{\label{Section:Galois_descent_for_abelian_categories}Extension of
Scalars and Galois descent for abelian categories}

We discuss extension-of-scalars of $K$-linear categories; in the
case of $K$-linear abelian categories, we state Galois descent. We
also carefully discuss absolute irreducibility and semi-simplicity. 
\begin{defn}
Let $\mathcal{C}$ be a $K$-linear additive category, where $K$
is a field. Let $L/K$ be a finite field extension. We define the
\emph{base-changed category} $\mathcal{C}_{L}$ as follows.
\end{defn}

\begin{itemize}
\item Objects of $\mathcal{C}_{L}$ are pairs $(M,f)$, where $M$ is an
object of $\mathcal{C}$ and $f:L\rightarrow\text{End}_{\mathcal{C}}M$
is a homomorphism of $K$-algebras. We call such an $f$ \emph{an
$L$-structure on $M$}.
\item Morphisms of $\mathcal{C}_{L}$ are morphisms of $\mathcal{C}$ that
are compatible with the $L$-structure.
\end{itemize}
If $\mathcal{C}$ is an abelian category, so is $\mathcal{C}_{L}$
\cite{sosna2014scalar}. Moreover, if $\mathcal{C}$ is a $K$-linear
Tannakian category (not necessarily assumed to be neutral), then $\mathcal{C}_{L}$
is an $L$-linear Tannakian category \cite[Th\'eor\`eme 5.4]{deligne2014semi}.

\begin{rem}
Deligne defines $\mathcal{C}_{L}$ in a slightly different way \cite[5]{deligne2014semi}.
Given an object $X$ of $\mathcal{C}$ and a finite dimensional $K$-vector
space $V$, define $V\otimes X$ to represent the functor $Y\mapsto V\otimes_{K}\text{Hom}_{K}(Y,X)$.
Given a finite extension $L/K$, an object with $L$-structure is
defined to be an object $X$ of $\mathcal{C}$ together with a morphism
$L\otimes X\rightarrow X$ in $\mathcal{C}$ satisfying certain natural
conditions. Then the base-changed category $\mathcal{C}_{L}$ is defined
to be the category of objects equipped with an $L$-structure and
with morphisms respecting the $L$-structure.
\end{rem}

Note that there is an induction functor: $\text{Ind}_{K}^{L}:\mathcal{C}\rightarrow\mathcal{C}_{L}$.
Let $\{\alpha\}$ be a basis for $L/K$. 
\[
\text{Ind}_{K}^{L}M=(\bigoplus_{\alpha}M,f)
\]
where $f$ is induced by the action of $L$ on the $\{\alpha\}$.
In Deligne's formulation, $\text{Ind}_{K}^{L}M$ is just $L\otimes M$
and the $L$-structure is the multiplication map $L\otimes_{K}L\otimes M\rightarrow L\otimes M$.
The induction functor has a natural right adjoint: $\text{Res}_{K}^{L}$
which simply forgets about the $L$-structure. If $M$ is an object
of $\mathcal{C}$, we sometimes denote by $M_{L}$ the object $\text{Ind}_{K}^{L}M$
for shorthand. If $C$ is a $K$-linear abelian category, induction
and restriction are exact functors.

\begin{rem}
\label{Remark:Absolute_Induction}The induction functor allows us
to describe more general extensions-of-scalars. Namely, let $E$ be
an \emph{algebraic} field extension of $K$. We define $\mathcal{C}_{E}$
to be the 2-colimit of the categories $\mathcal{C}_{L}$ where $L$
ranges over the subfields of $E$ finite over $K$. In particular,
we can define a category $\mathcal{C}_{\overline{K}}$ and an induction
functor 
\[
Ind_{K}^{\overline{K}}:\mathcal{C}\rightarrow\mathcal{C}_{\overline{K}}.
\]
\end{rem}

\begin{defn}
Let $\mathcal{C}$ be a $K$-linear abelian category. We say that
an object $M$ is \emph{absolutely irreducible }if $Ind_{K}^{\overline{K}}(M)$
is an irreducible object of $\mathcal{C}_{\overline{K}}$.
\end{defn}

If $L/K$ is a Galois extension, note that we have a natural (strict)
action of $G:=\text{Gal}(L/K)$ on the category $\mathcal{C}_{L}$
by twisting the $L$-structure. That is, if $g\in G$, $^{g}(C,f):=(C,f\circ g^{-1})$.
The group $G$ ``does nothing'' to maps: a map $\phi:(C,f)\rightarrow(C',f')$
in $\mathcal{C}_{L}$ is just a map $\phi_{K}:C\rightarrow C'$ in
$\mathcal{C}$ that commutes with the $L$-actions, and $g\in G$
acts by fixing the underlying $\phi_{K}$ while twisting the underlying
$L$-structures $f$, $f'$: $^{g}\phi:(C,f\circ g^{-1})\rightarrow(C',f'\circ g^{-1})$.

If $\lambda\in L$ is considered as a scalar endomorphism of $M$,
then the endomorphism $^{g}\lambda:\ ^{g}M\rightarrow\ ^{g}M$ is
the scalar $g(\lambda)$. If $\mathcal{C}$ is a $K$-linear rigid
abelian $\otimes$ category, then this action is compatible with the
inherited rigid abelian $\otimes$ structure on $\mathcal{C}_{L}$.
Similarly, if $F:\mathcal{C}\rightarrow\mathcal{D}$ is a $K$-linear
functor between $K$-linear categories, we have a canonical extension
functor $F_{L}:\mathcal{C}_{L}\rightarrow\mathcal{D}_{L}$ and $^{g}F_{L}(M)\cong F_{L}(^{g}M)$.
\begin{defn}
Suppose $\mathcal{C}$ is a abelian $K$-linear category and $L/K$
is a finite Galois extension with group $G$. Let $\mathcal{C}_{L}$
be the base-changed category. We define the \emph{category of descent
data}, $(\mathcal{C}_{L})^{G}$, as follows. The objects of $(\mathcal{C}_{L})^{G}$
are pairs $(M,\{c_{g}\})$ where $M$ is an object of $\mathcal{C}_{L}$
and the $c_{g}:M\rightarrow\ ^{g}M$ are a collection of isomorphisms
for each $g\in G$ that satisfies the cocycle condition. The morphisms
of $(\mathcal{C}_{L})^{G}$ are maps $(f_{g}:\ ^{g}M\rightarrow\ ^{g}N)$
that intertwine the $c_{g}$. 
\end{defn}

\begin{lem}
\label{Lemma:Galois_Descent_Abelian_Categories}If $\mathcal{C}$
is an abelian $K$-linear category and $L/K$ is a finite Galois extension,
then Galois descent holds. That is, $\mathcal{C}$ is equivalent to
the category $(\mathcal{C}_{L})^{G}$.
\end{lem}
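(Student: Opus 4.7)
The plan is to construct an explicit quasi-inverse to the induction functor and identify the target category with a module category in $\mathcal{C}$. The forward functor $F:\mathcal{C}\to(\mathcal{C}_L)^G$ sends $M$ to $(M_L,\{c_g\})$ equipped with its tautological descent datum (in Deligne's formulation, $c_g$ is $g^{-1}\otimes\text{id}_M$ on $L\otimes M$), and full faithfulness is nearly automatic: by Proposition \ref{Prop:Hom_Base_Change},
$$\text{Hom}_{\mathcal{C}_L}(M_L,N_L)\;\cong\; L\otimes_K\text{Hom}_\mathcal{C}(M,N),$$
and imposing $G$-equivariance cuts the right-hand side down to $\text{Hom}_\mathcal{C}(M,N)$ since $L^G=K$.

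For essential surjectivity — the heart of the lemma — I would reinterpret $(\mathcal{C}_L)^G$ as a module category in $\mathcal{C}$. Unpacking the definitions, an object $(C,f,\{c_g\})$ of $(\mathcal{C}_L)^G$ amounts to $C\in\mathcal{C}$ together with a $K$-algebra homomorphism $f:L\to\text{End}_\mathcal{C}(C)$ and automorphisms $c_g\in\text{Aut}_\mathcal{C}(C)$ satisfying $c_g\circ f(\lambda)=f(g^{-1}\lambda)\circ c_g$ and the cocycle $c_{gh}=c_h\circ c_g$ (since ${}^g$ leaves underlying morphisms untouched, the cocycle becomes an antihomomorphism relation in $\mathcal{C}$). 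Setting $\tilde c_g:=c_{g^{-1}}$ turns this into a genuine $K$-algebra homomorphism
$$\phi:A:=L\rtimes_K G\;\longrightarrow\;\text{End}_\mathcal{C}(C),$$
where $A$ is the skew group algebra generated over $K$ by $L$ and symbols $\tilde g$ subject to $\tilde g\lambda=g(\lambda)\tilde g$ and $\tilde g\tilde h=\widetilde{gh}$. Hence $(\mathcal{C}_L)^G$ is naturally equivalent to the category of $A$-module objects in $\mathcal{C}$.

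The last ingredient is Galois-theoretic: the natural map $A\to\text{End}_K(L)$ sending $\lambda$ to multiplication by $\lambda$ and $\tilde g$ to the $K$-linear action of $g$ on $L$ is injective by Dedekind's linear independence of characters, and since both sides have $K$-dimension $n^2$ (with $n:=[L:K]$), it is an isomorphism, so $A\cong M_n(K)$ after fixing a $K$-basis of $L$. Since every abelian category is idempotent-complete (idempotents split via image–kernel decomposition), Morita equivalence then produces an equivalence between $M_n(K)$-module objects in $\mathcal{C}$ and $\mathcal{C}$ itself: for a rank-one idempotent $e\in M_n(K)$, send $(C,\phi)$ to $\text{im}(\phi(e))$, with quasi-inverse $M\mapsto M^{\oplus n}$ equipped with the tautological matrix action. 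Composing all of these equivalences yields the desired quasi-inverse to $F$.

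The main obstacle is not conceptual but bookkeeping: verifying that the composition actually inverts $F$ requires choosing the idempotent $e\in\text{End}_K(L)$ carefully — for instance, the projection onto $K\hookrightarrow L$ along any $K$-linear complement. With that choice, applying the quasi-inverse to $F(M)=L\otimes M$ yields $\text{im}(e\otimes\text{id}_M)=K\otimes M\cong M$ on the nose, and naturality in $M$ is transparent. The remaining verification that descent data are respected in both directions reduces to the same explicit identification.
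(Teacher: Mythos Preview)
Your argument is correct. The paper itself gives no proof here: it simply cites \cite[Lemma 2.7]{sosna2014scalar} and moves on. So there is nothing to compare at the level of strategy, and your write-up supplies a genuine, self-contained proof where the paper offers only a reference.

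The route you chose---identifying $(\mathcal{C}_L)^G$ with the category of module objects over the skew group algebra $A=L\rtimes_K G$, invoking the Galois-theoretic isomorphism $A\cong\mathrm{End}_K(L)\cong M_n(K)$, and then using Morita equivalence via idempotent splitting in an abelian category---is one of the standard clean proofs and works exactly as you describe. The bookkeeping you flag (the antihomomorphism $c_{gh}=c_h\circ c_g$ forcing the substitution $\tilde c_g=c_{g^{-1}}$, and the choice of rank-one idempotent so that the composite visibly recovers $M$ from $L\otimes M$) is handled correctly. The full-faithfulness step via Proposition~\ref{Prop:Hom_Base_Change} and $L^G=K$ is also fine. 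One small remark: for the Morita step you are implicitly using that abelian categories are idempotent-complete, which you do state; it may be worth noting explicitly that the matrix units $e_{ij}$ furnish the isomorphisms between the summands $e_{ii}C$, so that $C\cong(e_{11}C)^{\oplus n}$ compatibly with the $M_n(K)$-action.
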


\begin{proof}
This is \cite[Lemma 2.7]{sosna2014scalar}.
\end{proof}
We say objects and morphisms in the essential image of $\text{Ind}_{K}^{L}$
\emph{descend}.

\section{\label{Section:2_cocycle_obstruction_for_descent}A 2-cocycle obstruction
for descent}

Suppose $\mathcal{C}$ is an abelian $K$-linear category and $L/K$
a finite Galois extension with group $G$. Let $\mathcal{C}_{L}$
be the base-changed category. Suppose an object $M\in\text{Ob}(\mathcal{C}_{L})$
is isomorphic to all of its twists by $g\in G$ and that the natural
map $L\tilde{\rightarrow}\text{End}_{\mathcal{C}_{L}}M$ is an isomorphism.
(This latter restriction will be relaxed later in the important Remark
\ref{Remark:Central_Cocycle}.) We will define a cohomology class
$\xi_{M}\in H^{2}(G,L^{*})$ such that $\xi_{M}=0$ if and only if
$M$ descends to $K$. This construction is well-known in representation
theory.
\begin{defn}
\label{Definition:cocycle}For each $g\in G$ pick an isomorphism
$c_{g}:M\rightarrow\ ^{g}M$. The function $\xi_{M,c}:G\times G\rightarrow L^{*}$,
depending on the choices $\{c_{g}\}$, is defined as follows: 
\[
\xi_{M,c}(g,h)=c_{gh}^{-1}\circ\ ^{g}c_{h}\circ c_{g}\in Aut_{\mathcal{C}_{L}}(M)\cong L^{*}
\]
\end{defn}

\begin{prop}
The function $\xi_{M,c}$ is a 2-cocycle.
\end{prop}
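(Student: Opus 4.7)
The plan is to verify the 2-cocycle identity
\[
\xi_{M,c}(g,hk)\cdot g\bigl(\xi_{M,c}(h,k)\bigr) \;=\; \xi_{M,c}(gh,k)\cdot \xi_{M,c}(g,h)
\]
by direct composition. The cleanest setup is to work with underlying morphisms in $\mathcal{C}$: for any $g,h\in G$, the twisted map ${}^{g} c_h : {}^{g} M \to {}^{gh} M$ has \emph{the same underlying morphism in $\mathcal{C}$} as $c_h$, only the $L$-structures on source and target being modified. Writing compositions right-to-left, we may therefore regard $\xi_{M,c}(g,h) = c_{gh}^{-1}\, c_h\, c_g$ as an element of $\text{Aut}_{\mathcal{C}_L}(M) \cong L^*$, and similarly for the other values of $\xi_{M,c}$.

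Expanding the right-hand side of the desired identity, the inner factors $c_{gh}\, c_{gh}^{-1}$ cancel and we are left with
\[
\xi_{M,c}(gh,k)\cdot \xi_{M,c}(g,h) \;=\; c_{ghk}^{-1}\, c_k\, c_h\, c_g.
\]
Using the defining relation $c_k\, c_h = c_{hk}\cdot \xi_{M,c}(h,k)$, this rewrites as $\xi_{M,c}(g,hk)\cdot \bigl(c_g^{-1}\,\xi_{M,c}(h,k)\, c_g\bigr)$. Thus the whole assertion reduces to showing that conjugation by $c_g$ implements the Galois action on $L^*\cong \text{End}_{\mathcal{C}_L}(M)$.

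This final point is the only nontrivial step, and it is immediate from the definition of a morphism in $\mathcal{C}_L$. Since $c_g : (M,f)\to (M, f\circ g^{-1})$ lies in $\mathcal{C}_L$, we have $(f\circ g^{-1})(\lambda)\circ c_g = c_g\circ f(\lambda)$ for every $\lambda\in L$, which rearranges to $c_g^{-1}\circ f(\mu)\circ c_g = f(g(\mu))$ for all $\mu\in L$. Applying this with $\mu$ the scalar corresponding to $\xi_{M,c}(h,k)$ finishes the verification. The main obstacle is purely one of bookkeeping: one must distinguish composition in $\mathcal{C}$ from composition in $\mathcal{C}_L$, and keep in mind that the identification $L^*\cong \text{Aut}_{\mathcal{C}_L}(M)$ depends on the chosen $L$-structure, so that the abstract Galois action on $L^*$ is realized exactly by this conjugation formula.
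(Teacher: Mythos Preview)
Your proof is correct and follows essentially the same route as the paper's: both expand $\xi(gh,k)\,\xi(g,h)$, cancel the inner $c_{gh}\,c_{gh}^{-1}$, and reduce the claim to the fact that moving a scalar past $c_g$ implements the Galois action. The only cosmetic difference is bookkeeping: the paper keeps the twist notation ${}^{g}c_h$ throughout and invokes the $L$-linearity of $c_{g}$ at the end, while you pass to underlying $\mathcal{C}$-morphisms from the start and make the conjugation formula $c_g^{-1}\circ f(\mu)\circ c_g = f(g(\mu))$ explicit---which is arguably clearer, since it spells out exactly why the paper's ``because the latter is in $L$'' step works.
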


\begin{proof}
We need to check 
\[
^{g_{1}}\xi(g_{2},g_{3})\xi(g_{1},g_{2}g_{3})=\xi(g_{1}g_{2},g_{3})\xi(g_{1},g_{2})
\]
We may think of the right hand side as a scalar function $M\rightarrow M$,
which allows us to write it as
\begin{align*}
\xi(g_{1}g_{2},g_{3})\xi(g_{1},g_{2})= & c_{g_{1}g_{2}g_{3}}^{-1}\circ\ ^{g_{1}g_{2}}c_{g_{3}}\circ c_{g_{1}g_{2}}\circ c_{g_{1}g_{2}}^{-1}\circ\ ^{g_{1}}c_{g_{2}}\circ c_{g_{1}}\\
= & c_{g_{1}g_{2}g_{3}}^{-1}\circ\ ^{g_{1}}(^{g_{2}}c_{g_{3}}\circ c_{g_{2}})\circ c_{g_{1}}\\
= & c_{g_{1}g_{2}g_{3}}^{-1}\circ\ ^{g_{1}}c_{g_{2}g_{3}}\circ\ ^{g_{1}}(c_{g_{2}g_{3}}^{-1}\circ\ ^{g_{2}}c_{g_{3}}\circ c_{g_{2}})\circ c_{g_{1}}\\
= & c_{g_{1}g_{2}g_{3}}^{-1}\circ\ ^{g_{1}}c_{g_{2}g_{3}}\circ\ ^{g_{1}}\xi(g_{2},g_{3})\circ c_{g_{1}}\\
= & c_{g_{1}g_{2}g_{3}}^{-1}\circ\ ^{g_{1}}c_{g_{2}g_{3}}\circ c_{g_{1}}\circ\ ^{g_{1}}\xi(g_{2},g_{3})\\
= & \xi(g_{1},g_{2}g_{3})\ ^{g_{1}}\xi(g_{2},g_{3})
\end{align*}
In the penultimate line, we may commute the $c_{g_{1}}$ and $^{g_{1}}\xi(g_{2,}g_{3})$
because the latter is in $L$.
\end{proof}
\begin{rem}
If $\xi_{M,c}=1$, then the collection $\{c_{g}\}$ form a descent
datum for $M$ which is effective by Galois descent for abelian categories,
Lemma \ref{Lemma:Galois_Descent_Abelian_Categories}.
\end{rem}

\begin{prop}
\label{Proposition:coboundary_descends}Let $\mathcal{C}$ be an abelian
$K$-linear category and $L/K$ a finite Galois extension with group
$G$. Let $M\in\text{Ob}(\mathcal{C}_{L})$ such that 
\begin{itemize}
\item $M\cong\ ^{g}M$ for all $g\in G$
\item The natural map $L\hookrightarrow End_{L}(M)$ is an isomorphism.
\end{itemize}
If $\xi_{M,c}$ is a coboundary, then $M$ is in the essential image
of $\text{Ind}_{K}^{L}$, i.e. $M$ descends.
\end{prop}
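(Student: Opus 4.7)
The plan is to modify the chosen isomorphisms $\{c_g : M \to {}^gM\}$ by scalars in $L^*$ so that the resulting modified cocycle is identically $1$, and then invoke Lemma \ref{Lemma:Galois_Descent_Abelian_Categories}. Concretely, since $\mathrm{End}_{\mathcal{C}_L}(M) = L$, any other isomorphism $\tilde c_g : M \to {}^gM$ is of the form $\tilde c_g = \mu_g \cdot c_g$ for some $\mu_g \in L^*$. The first step is to compute how the cocycle changes: using the key identity from the previous section that the endomorphism ${}^g\mu$ of ${}^gM$ acts as the scalar $g(\mu)$, I expect a short calculation to yield
\[
\xi_{M,\tilde c}(g,h) \;=\; \mu_{gh}^{-1}\cdot g(\mu_h)\cdot \mu_g \cdot \xi_{M,c}(g,h).
\]

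Next, I use the assumption that $\xi_{M,c}$ is a coboundary. By definition of the differential in group cohomology with $G$ acting on $L^*$ in the standard way, there exists $\lambda : G \to L^*$ with
\[
\xi_{M,c}(g,h) \;=\; g(\lambda_h)\,\lambda_{gh}^{-1}\,\lambda_g.
\]
Setting $\mu_g := \lambda_g^{-1}$ and substituting into the formula above gives $\xi_{M,\tilde c}(g,h) = 1$ for all $g,h \in G$; in other words, the new isomorphisms $\tilde c_g$ satisfy $\tilde c_{gh} = {}^g\tilde c_h \circ \tilde c_g$, so $\{\tilde c_g\}$ constitutes a descent datum on $M \in \mathcal{C}_L$.

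Finally, Lemma \ref{Lemma:Galois_Descent_Abelian_Categories} asserts the equivalence $\mathcal{C} \simeq (\mathcal{C}_L)^G$, so any descent datum is effective: $(M, \{\tilde c_g\})$ lies in the essential image of $\mathrm{Ind}_K^L$, which is exactly the conclusion that $M$ descends to $K$. The only real content is the bookkeeping calculation in the first paragraph, and the main thing to be careful about is the convention for the Galois action of $G$ on $L^*$ when comparing the formula for $\xi_{M,\tilde c}$ to the formula for a coboundary; once these conventions are aligned, the argument is immediate.
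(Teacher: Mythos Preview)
Your proof is correct and follows exactly the same approach as the paper: rescale each $c_g$ by the inverse of the trivializing $1$-cochain so that the modified cocycle becomes identically $1$, then invoke Lemma~\ref{Lemma:Galois_Descent_Abelian_Categories}. The paper's write-up is terser (it simply sets $c'_g=c_g/\alpha(g)$ and asserts $\xi_{M,c'}=1$), but the content is identical.
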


\begin{proof}
If $\xi_{M,c}$ is a coboundary, there exists a function $\alpha:G\rightarrow L^{*}$
such that
\[
\xi_{M,c}(g,h)=\frac{^{g}\alpha(h)\alpha(g)}{\alpha(gh)}
\]
 Now, set $c'_{g}=\frac{c_{g}}{\alpha(g)}:M\rightarrow\ ^{g}M$ and
note that the $c'_{g}$ are a descent datum for $M$ because the associated
$\mbox{\ensuremath{\xi}}_{M,c'}=1$.
\end{proof}
\begin{prop}
\label{Proposition:unique_cocycle}Given $M\in\mathcal{C}_{L}$ as
in Proposition \ref{Proposition:coboundary_descends} and two choices
$\{c_{g}\}$ and $\{c'_{g}\}$ of isomorphisms, $\xi_{M,c}$ and $\xi_{M,c'}$
differ by a coboundary and thus give the same class in $H^{2}(G,L^{*})$.
We may therefore unambiguously write $\xi_{M}$ for the cohomology
class associated to $M$.
\end{prop}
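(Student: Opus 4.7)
The plan is to exploit Schur's lemma (in the form we have arranged: $L \xrightarrow{\sim} \mathrm{End}_{\mathcal{C}_L}(M)$) to parametrize the discrepancy between the two choices of trivializations by a 1-cochain $\lambda \in C^1(G,L^*)$, and then to verify by a direct computation that this 1-cochain witnesses $\xi_{M,c'} / \xi_{M,c}$ as a coboundary.

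First I would observe that for each $g \in G$, both $c_g$ and $c'_g$ are isomorphisms $M \to {}^gM$, so $c'_g \circ c_g^{-1} \in \mathrm{Aut}_{\mathcal{C}_L}({}^gM)$. Our hypothesis that $L \cong \mathrm{End}_{\mathcal{C}_L}(M)$ (which by naturality of the $L$-action gives $L \cong \mathrm{End}_{\mathcal{C}_L}({}^gM)$ as well) forces this automorphism to be a scalar $\lambda(g) \in L^*$. This produces a function $\lambda \colon G \to L^*$ with $c'_g = \lambda(g) \cdot c_g$; the question is how to relate $\xi_{M,c'}$ to $\xi_{M,c}$ in terms of $\lambda$.

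Next I would substitute $c'_g = \lambda(g) c_g$ into Definition \ref{Definition:cocycle}. Functoriality of twisting together with the computation recorded immediately after Lemma \ref{Lemma:Galois_Descent_Abelian_Categories} (that a scalar $\mu \in L$ on $M$ twists to the scalar $g(\mu)$ on ${}^gM$) gives ${}^g c'_h = g(\lambda(h)) \cdot {}^g c_h$. Since scalars from $L$ commute with every morphism in $\mathcal{C}_L$ (exactly the technique already used to verify the cocycle identity for $\xi_{M,c}$), the three scalar factors $\lambda(gh)^{-1}$, $g(\lambda(h))$, $\lambda(g)$ can be pulled out of the composition, yielding
\[
\xi_{M,c'}(g,h) \;=\; \frac{g(\lambda(h)) \cdot \lambda(g)}{\lambda(gh)} \cdot \xi_{M,c}(g,h).
\]
The prefactor is exactly the coboundary $(\delta \lambda)(g,h)$, so $\xi_{M,c'}$ and $\xi_{M,c}$ represent the same class in $H^2(G,L^*)$.

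There is no substantive obstacle; the only care required is in tracking the Galois action on scalars through the twisting operation, which is precisely the identity already invoked in the proof that $\xi_{M,c}$ is a 2-cocycle. The argument is in fact the natural "gauge-transformation" dual to that computation, and together they show both that the formula of Definition \ref{Definition:cocycle} lands in $Z^2(G,L^*)$ and that its image in $H^2(G,L^*)$ is independent of the choices.
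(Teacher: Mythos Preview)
Your argument is correct and is essentially the same as the paper's: both define a $1$-cochain from the ratio of the two families of isomorphisms (the paper takes $\alpha(g)=(c'_g)^{-1}\circ c_g\in\mathrm{Aut}_{\mathcal{C}_L}(M)\cong L^*$, you take $\lambda(g)=c'_g\circ c_g^{-1}\in\mathrm{Aut}_{\mathcal{C}_L}({}^gM)\cong L^*$) and observe that it exhibits $\xi_{M,c'}/\xi_{M,c}$ as a coboundary. Your version simply spells out the substitution and the scalar-commuting step that the paper leaves implicit.
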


\begin{proof}
Note that $(c'_{g})^{-1}\circ c_{g}:M\rightarrow M$ is in $L^{*}$.
This ratio will be a function $\alpha:G\rightarrow L^{*}$ exhibiting
the ratio $\frac{\xi_{M,c}}{\xi_{M,c'}}$ as a coboundary.
\end{proof}
\begin{cor}
\label{Corollary:brauer_class_descends}Let $\mathcal{C}$ be an abelian
$K$-linear category, let $L/K$ be a finite Galois extension with
group $G$, and let $\mathcal{C}_{L}$ be the base-changed category.
Let $M$ be an object of $\mathcal{C}_{L}$ such that

\begin{enumerate}
\item $M\cong\ ^{g}M$ for all $g\in G$ and
\item the natural map $L\rightarrow\text{End}_{\mathcal{C}_{L}}(M)$ is
an isomorphism.
\end{enumerate}
Then the cocycle $\xi_{M}$ (as in Definition \ref{Definition:cocycle})
is 0 in $H^{2}(K,\mathbb{G}_{m})$ if and only if $M$ descends.

\end{cor}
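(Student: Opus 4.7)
The plan is to combine the two preceding propositions, once we pin down how the class $\xi_M\in H^2(G,L^*)$ from Definition \ref{Definition:cocycle} is interpreted in $H^2(K,\mathbb{G}_m)$. For a finite Galois extension $L/K$ with group $G$, inflation gives an injection $H^2(G,L^*)\hookrightarrow H^2(K,\mathbb{G}_m)=\text{Br}(K)$ whose image is the relative Brauer group $\text{Br}(L/K)$. Vanishing of $\xi_M$ in the absolute Brauer group is therefore equivalent to its vanishing in $H^2(G,L^*)$, and I will argue in the latter.

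For the ``if'' direction, suppose $\xi_M=0$ in $H^2(G,L^*)$. By Proposition \ref{Proposition:unique_cocycle} this class is independent of the choice of $\{c_g:M\to{}^gM\}$, so any representative cocycle $\xi_{M,c}$ is already a coboundary. Proposition \ref{Proposition:coboundary_descends} then produces descent data for $M$, which by Galois descent (Lemma \ref{Lemma:Galois_Descent_Abelian_Categories}) is effective, so $M$ descends to an object of $\mathcal{C}$.

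For the ``only if'' direction, assume $M\cong\text{Ind}_K^L N$ for some $N\in\mathcal{C}$. In Deligne's formulation, the underlying $\mathcal{C}$-object of $\text{Ind}_K^L N$ is $L\otimes_K N$, with $L$-structure given by left multiplication on the first factor, and the maps $\tilde c_g=g^{-1}\otimes\text{id}_N$ form a descent datum on $M$ (a direct check: $\tilde c_g$ intertwines multiplication by $\lambda$ on the source with multiplication by $g^{-1}(\lambda)$ on the target, which is exactly the $L$-structure on ${}^g\text{Ind}_K^L N$). For this choice $\tilde c_{gh}={}^g\tilde c_h\circ\tilde c_g$ at the level of underlying $\mathcal{C}$-morphisms, so $\xi_{M,\tilde c}$ is identically $1$, whence $\xi_M=0$ in $H^2(G,L^*)$ by Proposition \ref{Proposition:unique_cocycle}.

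No genuine obstacle arises: the content of the corollary is already carried by Propositions \ref{Proposition:coboundary_descends} and \ref{Proposition:unique_cocycle}, and the only point that deserves a sanity check is the cocycle-triviality of the canonical descent datum on $\text{Ind}_K^L N$. The minor nuance is keeping track of the compatibility between the ``abstract'' cocycle construction of Definition \ref{Definition:cocycle} and its realization in the Brauer group via inflation; this is routine once the identification $\text{Aut}_{\mathcal{C}_L}(M)\cong L^*$ supplied by hypothesis (2) is in place.
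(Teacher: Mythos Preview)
Your proof is correct and follows the same approach as the paper. The paper's own proof is a one-liner (``Combine Propositions \ref{Proposition:coboundary_descends} and \ref{Proposition:unique_cocycle}''), and you have simply unpacked both directions explicitly: the ``if'' direction is Proposition \ref{Proposition:coboundary_descends}, and for ``only if'' you exhibit the canonical descent datum on $\text{Ind}_K^L N$ whose associated cocycle is identically $1$, then invoke Proposition \ref{Proposition:unique_cocycle}. Your remark on inflation from $H^2(G,L^*)$ into $H^2(K,\mathbb{G}_m)$ is a point the paper leaves implicit.
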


\begin{proof}
Combine Propositions \ref{Proposition:coboundary_descends} and \ref{Proposition:unique_cocycle}.
\end{proof}
\begin{rem}
\label{Remark:Central_Cocycle}We did not have to assume that $L\rightarrow End_{\mathcal{C}_{L}}(M)$
was an isomorphism for a cocycle to exist. A necessary assumption
is that there exists a collection $\{c_{g}\}$ of isomorphisms such
that $\xi_{M,c}(g,h)\in L^{*}$ for all $g,h\in G$. The key is that
$H^{2}$ exists as long as the coefficients are abelian. Note, however,
that in this level of generality there is no guarantee that cohomology
class is unique: it depends very much on the choice of the isomorphisms
$\{c_{g}\}$. Therefore, this technique \emph{will not be adequate
}to prove that objects do not descend; we can only prove than an object
does descend by finding a collection $\{c_{g}\}$ whose associated
$\xi_{c}$ is a coboundary.
\end{rem}

\begin{rem}
Note that if $\mathcal{C}_{\mathbb{R}}$ is the category of real representations
of a compact group and $\mathcal{C}_{\mathbb{C}}$ is the complexification,
namely the category of complex representations of a compact group,
then this 2-cocycle has a more classical name: ``Frobenius-Schur
Indicator''. It tests whether an irreducible complex representation
of a compact group with real character can be defined over $\mathbb{R}$.
If not, the representation is called quaternionic.
\end{rem}

Now, suppose $\mathcal{C}$ is a $K$-linear rigid abelian $\otimes$
category. If $M\in\text{Ob}(\mathcal{C}_{L})$ such that $^{g}M\cong M$
for all $g\in G$ and $L\rightarrow End_{\mathcal{C}_{L}}M$ is an
isomorphism, then the same is true for $M^{*}$. Moreover, choosing
$\{c_{g}\}$ for $M$ gives the natural choice of $\{(c_{g}^{*})^{-1}\}$
for $M^{*}$ so $\xi_{M}^{-1}=\xi_{M^{*}}$ .

In general, if $M$ and $N$ are as above with choices of isomorphisms
$\{c_{g}:M\rightarrow\ ^{g}M\}$ and $\{d_{g}:N\rightarrow\ ^{g}N\}$
with associated cohomology classes $\xi_{M}$ and $\xi_{N}$ respectively,
then we can cook up a cohomology class to possibly detect whether
$M\otimes N$ descends, $\xi_{M}\xi_{N}$, using the isomorphism $c_{g}\otimes d_{g}:M\otimes N\rightarrow\ ^{g}M\otimes\ ^{g}N\cong\ ^{g}(M\otimes N)$.
This is interesting because in general $M\otimes N$ might have endomorphism
algebra larger than $L$; in particular, we weren't guaranteed the
existence of a cohomology class $\xi_{M\otimes N}$, as discussed
in Remark \ref{Remark:Central_Cocycle}.
\begin{lem}
Let $\mathcal{C}$ be a $K$-linear rigid abelian $\otimes$ category,
$L/K$ a finite Galois extension with group $G,$ and $\mathcal{C}_{L}$
the base-changed category. Let $M\in Ob(\mathcal{C}_{L})$ have endomorphism
algebra $L$ and suppose $^{g}M\cong M$ for all $g\in G$. Then $\underbar{End}(M)\cong M\otimes M^{*}$
descends to $\mathcal{C}_{K}$.
\end{lem}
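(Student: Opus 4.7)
The plan is to apply the descent criterion of the previous section. Note that $M\otimes M^{*}$ typically has endomorphism algebra strictly larger than $L$ (containing, for instance, the idempotent coming from the coevaluation), so Proposition \ref{Proposition:coboundary_descends} does not apply verbatim; however, as emphasized in Remark \ref{Remark:Central_Cocycle}, it suffices to exhibit one collection of isomorphisms $\phi_{g}:M\otimes M^{*}\to\ ^{g}(M\otimes M^{*})$ whose associated 2-cocycle $\xi_{M\otimes M^{*},\phi}$ is a coboundary --- in fact, I will arrange for it to equal $1$ identically --- so that $\{\phi_{g}\}$ forms a strict descent datum in the sense of Lemma \ref{Lemma:Galois_Descent_Abelian_Categories}.

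First I would fix any collection of isomorphisms $\{c_{g}:M\to\ ^{g}M\}$, which exist by the hypothesis $M\cong\ ^{g}M$, with associated cocycle $\xi_{M,c}\in Z^{2}(G,L^{*})$. Since $\text{End}_{\mathcal{C}_{L}}(M)=L$, duality gives $\text{End}_{\mathcal{C}_{L}}(M^{*})=L$ as well, and the dual maps $c_{g}^{\vee}:=(c_{g}^{*})^{-1}:M^{*}\to\ ^{g}(M^{*})$ are a natural choice of isomorphisms for $M^{*}$. Unwinding the definition of $\xi_{M^{*},c^{\vee}}$ using contravariance of duality, together with the fact that the dual of a scalar $\lambda\in L^{*}\subset\text{Aut}(M)$ is the same scalar in $\text{Aut}(M^{*})$, one obtains $\xi_{M^{*},c^{\vee}}=\xi_{M,c}^{-1}$; this is precisely the computation recorded in the paragraph immediately preceding the lemma statement.

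Finally I would take $\phi_{g}:=c_{g}\otimes c_{g}^{\vee}:M\otimes M^{*}\to\ ^{g}M\otimes\ ^{g}M^{*}\cong\ ^{g}(M\otimes M^{*})$ and compute the resulting cocycle. Compatibility of $\otimes$ with the Galois action makes the cocycle multiplicative in the tensor factors, so
\[
\xi_{M\otimes M^{*},\phi}(g,h)=\xi_{M,c}(g,h)\cdot\xi_{M^{*},c^{\vee}}(g,h)=\xi_{M,c}(g,h)\cdot\xi_{M,c}(g,h)^{-1}=1.
\]
Hence $\{\phi_{g}\}$ is a strict descent datum, and Lemma \ref{Lemma:Galois_Descent_Abelian_Categories} produces the required object of $\mathcal{C}$ whose induction is $M\otimes M^{*}\cong\underbar{End}(M)$. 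The main subtlety to keep in mind --- and essentially the only one --- is that because $\text{End}_{\mathcal{C}_{L}}(M\otimes M^{*})$ strictly contains $L$, the cohomology class of the cocycle is not an intrinsic invariant of $M\otimes M^{*}$; we must actually \emph{construct} the specific collection $\phi_{g}=c_{g}\otimes c_{g}^{\vee}$ whose cocycle trivializes, rather than appeal to the vanishing of a well-defined Brauer class.
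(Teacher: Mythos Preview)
Your proof is correct and follows essentially the same approach as the paper: choose isomorphisms $c_g$ for $M$, take $(c_g^{*})^{-1}$ for $M^{*}$, tensor them to get a collection whose associated cocycle is $\xi_{M}\cdot\xi_{M}^{-1}=1$, and conclude by Galois descent. Your write-up is simply a more detailed unpacking of the paper's three-sentence argument, and you correctly flag the point (handled in the paper by Remark~\ref{Remark:Central_Cocycle}) that because $\text{End}(M\otimes M^{*})\supsetneq L$ one must exhibit a specific trivializing collection rather than invoke a well-defined class.
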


\begin{proof}
As noted above, if $\xi$ is the cocycle associated to $M$, then
$\xi^{-1}$ is the cocycle associated to $M^{*}$. Then $1=\xi\xi^{-1}$
is a cocycle associated to $M\otimes M^{*}$, whence it descends.
\end{proof}
\begin{rem}
A related classical fact: let $V$ be a finite dimensional complex
representation $V$ of a compact group $G$. Then the representation
$\underbar{End}(V)\cong V\otimes V^{*}$ is defined over $\mathbb{R}$.
\end{rem}

We now give two criteria for descent. Though the second is strictly
more general than the first, the hypotheses are more complicated and
we found it helpful to separate the two.
\begin{lem}
\label{Lemma:Easy_Descent}Let $F:\mathcal{C}\rightarrow\mathcal{D}$
be a $K$-linear functor between abelian $K$-linear categories. Let
$L/K$ be a finite Galois extension with group $G$ and let $F_{L}:\mathcal{C}_{L}\rightarrow\mathcal{D}_{L}$
be the base-changed functor. Let $M\in\text{Ob}(\mathcal{C}_{L})$
be an object such that $^{g}M$ is isomorphic to $M$ for $g\in G$
and $End_{\mathcal{C}_{L}}M\cong L$. Suppose $End_{\mathcal{D}_{L}}F_{L}(M)\cong L$.
Then $M$ descends to $\mathcal{C}$ if and only if $F_{L}(M)$ descends
to $\mathcal{D}$.
\end{lem}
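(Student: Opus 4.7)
The plan is to reduce the statement to the 2-cocycle obstruction developed in this section, specifically Corollary \ref{Corollary:brauer_class_descends}. The hypotheses are tailor-made to ensure that both $M$ and $F_L(M)$ have well-defined obstruction classes $\xi_M, \xi_{F_L(M)} \in H^2(G, L^*)$, so it is enough to show these two classes coincide.

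First I would handle the ``only if'' direction directly and formally: since $F$ is $K$-linear, the base-changed functor $F_L$ commutes with $\text{Ind}_K^L$ up to canonical isomorphism (both sides represent the same functor after unwinding definitions). So if $M \cong \text{Ind}_K^L(M_0)$, then $F_L(M) \cong \text{Ind}_K^L(F(M_0))$, which descends. No obstruction theory is needed here.

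For the nontrivial ``if'' direction, I would choose any family of isomorphisms $c_g : M \to {}^g M$ (these exist by hypothesis) and form the cocycle $\xi_{M,c}(g,h) = c_{gh}^{-1} \circ {}^g c_h \circ c_g \in \text{Aut}_{\mathcal{C}_L}(M) = L^*$ as in Definition \ref{Definition:cocycle}. Applying $F_L$ to this family yields isomorphisms $F_L(c_g) : F_L(M) \to F_L({}^g M) = {}^g F_L(M)$ (using that $F_L$ commutes with the Galois twists, a direct check from the definition of $F_L$ on $L$-structures), and functoriality gives
\[
\xi_{F_L(M), F_L(c)}(g,h) = F_L(c_{gh})^{-1} \circ {}^g F_L(c_h) \circ F_L(c_g) = F_L\bigl(\xi_{M,c}(g,h)\bigr).
\]
The key observation is that $\xi_{M,c}(g,h) \in L^*$ acts as a scalar endomorphism of $M$, and under the $K$-linear functor $F_L$ a scalar endomorphism by $\lambda \in L$ goes to the scalar endomorphism by $\lambda$ on $F_L(M)$ (since the $L$-structure on $F_L(M)$ is, by construction, inherited from that on $M$). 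Therefore $\xi_{F_L(M), F_L(c)} = \xi_{M,c}$ as elements of $L^*$, and in particular they define the same class in $H^2(G, L^*)$.

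By Corollary \ref{Corollary:brauer_class_descends} applied to both $\mathcal{C}$ and $\mathcal{D}$ (the hypothesis $\text{End}_{\mathcal{D}_L} F_L(M) \cong L$ is used precisely to invoke the corollary on the target side), $M$ descends if and only if $\xi_M = 0$, and $F_L(M)$ descends if and only if $\xi_{F_L(M)} = 0$; since these classes are equal, the equivalence follows. The main subtlety — and essentially the only nontrivial point — is verifying that $F_L$ sends scalar multiplication by $\lambda \in L$ on $M$ to scalar multiplication by the same $\lambda$ on $F_L(M)$; this is immediate from the fact that $F$ is $K$-linear and that the $L$-structure on $F_L(M)$ is defined by post-composition of $F_L$ with the $L$-structure on $M$.
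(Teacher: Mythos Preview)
Your proposal is correct and follows essentially the same approach as the paper: both arguments show that $\xi_M = \xi_{F_L(M)}$ by transporting a family $\{c_g\}$ through $F_L$ and observing that $F_L$ preserves scalar endomorphisms in $L^*$. The paper's proof is slightly more symmetric in that it notes the map $\text{Hom}_{\mathcal{C}_L}(M,{}^gM) \to \text{Hom}_{\mathcal{D}_L}(F_L(M),{}^gF_L(M))$ is an isomorphism of one-dimensional $L$-spaces (so any $d_g$ on the target lifts uniquely to a $c_g$ on the source), thereby handling both directions at once via the cocycle equality; your treatment of the ``only if'' direction separately is fine but unnecessary once you have $\xi_M = \xi_{F_L(M)}$.
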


\begin{proof}
The natural map $Hom_{\mathcal{C}_{L}}(M,\ ^{g}M)\rightarrow Hom_{\mathcal{D}_{L}}(F_{L}(M),\ ^{g}F_{L}(M))$
is an isomorphism. Hence choosing isomorphisms $\{d_{g}:F_{L}(M)\rightarrow\ ^{g}F_{L}(M)\cong F_{L}(^{g}M)\}$
is the same as choosing isomorphisms $\{c_{g}:M\rightarrow\ ^{g}M\}$.
Therefore the cocycles $\xi_{F(M)}$ and $\xi_{M}$ are the same.%
\end{proof}
\begin{lem}
\label{Lemma:Filtered_Descent}Let $F:\mathcal{C}\rightarrow\mathcal{D}$
be a $K$-linear functor between $K$-linear abelian categories and
let $L/K$ be a finite Galois extension with group $G$. Let $F_{L}:\mathcal{C}_{L}\rightarrow\mathcal{D}_{L}$
be the base-changed functor. Suppose $M\in\text{Ob}(\mathcal{C}_{L})$
with $L\cong End_{\mathcal{C}_{L}}M$ and $^{g}M\cong M$ for all
$g\in G$. Further suppose $F_{L}(M)\cong N_{1}\oplus N_{2}$ satisfying
the following two conditions:

\begin{itemize}
\item $L\cong End_{\mathcal{D}_{L}}N_{1}$ and
\item there is no nonzero morphism $N_{1}\rightarrow\ ^{g}N_{2}$ in $\mathcal{D}_{L}$
for any $g\in G$.
\end{itemize}
Then $M$ (and $N_{2}$) descend if and only if $N_{1}$ descends.

\end{lem}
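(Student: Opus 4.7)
The strategy is to identify the cocycles $\xi_M$ and $\xi_{N_1}$ in $H^2(G,L^*)$, after which the equivalence will follow from Corollary \ref{Corollary:brauer_class_descends}. To this end, I would choose isomorphisms $c_g: M\rightarrow\ ^g M$ and apply $F_L$ to obtain isomorphisms $\phi_g := F_L(c_g): F_L(M) \rightarrow\ ^g F_L(M)$. With respect to the decompositions $F_L(M) = N_1 \oplus N_2$ and $^g F_L(M) \cong\ ^g N_1 \oplus\ ^g N_2$, I write $\phi_g$ as a $2\times 2$ matrix of components; the hypothesis $\text{Hom}_{\mathcal{D}_L}(N_1,\ ^g N_2) = 0$ forces the component $N_1 \rightarrow\ ^g N_2$ to vanish, so $\phi_g$ is upper triangular. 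A standard argument with block-triangular morphisms in an abelian category then shows that both diagonal components $d_g: N_1 \rightarrow\ ^g N_1$ and $d'_g: N_2 \rightarrow\ ^g N_2$ are isomorphisms and that $\phi_g^{-1}$ is also upper triangular, with $(1,1)$-entry $d_g^{-1}$. In particular $^g N_1 \cong N_1$ for every $g \in G$, which together with $\text{End}_{\mathcal{D}_L}(N_1) \cong L$ ensures that the class $\xi_{N_1} \in H^2(G, L^*)$ is well-defined.

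To compare the two cocycles, note that on the one hand
\[ \phi_{gh}^{-1} \circ\ ^g\phi_h \circ \phi_g \;=\; F_L(c_{gh}^{-1} \circ\ ^g c_h \circ c_g) \;=\; F_L(\xi_{M,c}(g,h) \cdot 1_M) \]
is the scalar $\xi_{M,c}(g,h) \in L^*$ acting on $F_L(M)$, hence on each summand $N_i$ via the $L$-structure inherited from $F_L(M)$. On the other hand, multiplying three upper triangular morphisms, the $(1,1)$-entry of the same composition is $d_{gh}^{-1} \circ\ ^g d_h \circ d_g = \xi_{N_1,d}(g,h)$ by the very definition of the cocycle associated to $N_1$ with respect to $\{d_g\}$. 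Comparing $(1,1)$-entries yields $\xi_{N_1,d}(g,h) = \xi_{M,c}(g,h)$ in $L^*$, so $\xi_M = \xi_{N_1}$ as classes in $H^2(G,L^*)$.

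By Corollary \ref{Corollary:brauer_class_descends} applied to both $M$ and $N_1$, each descends if and only if its associated cohomology class vanishes, and since these classes coincide one descends iff the other does. For the statement about $N_2$, the same matrix computation applied to the $(2,2)$-entry gives $\xi_{N_2,d'}(g,h) = \xi_{M,c}(g,h)$ in $L^*$. If $M$ descends, I can choose $\{c_g\}$ with $\xi_{M,c} \equiv 1$; the identity $d'_{gh} = {}^g d'_h \circ d'_g$ then says that $\{d'_g\}$ is a descent datum for $N_2$, and Lemma \ref{Lemma:Galois_Descent_Abelian_Categories} produces an object of $\mathcal{D}$ whose base change is $N_2$. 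The main subtlety to verify is that the block-triangular structure of morphisms genuinely propagates to inverses and compositions in an abstract abelian category; once this is established, the argument reduces to straightforward cocycle bookkeeping.
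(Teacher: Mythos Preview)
Your proof is correct and follows essentially the same approach as the paper: both arguments push the isomorphisms $c_g$ through $F_L$, project to the $N_1$ component, and conclude that $\xi_M = \xi_{N_1}$ in $H^2(G,L^*)$. Your version is more explicit about the block-triangular matrix bookkeeping and about the descent of $N_2$ (which the paper leaves implicit), but the underlying idea is identical.
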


\begin{proof}
The composition

\begin{align*}
\text{Hom}_{\mathcal{C}_{L}}(M,\ ^{g}M)\rightarrow & \text{Hom}_{\mathcal{D}_{L}}(F_{L}(M),\ ^{g}F_{L}(M))\\
\cong & \text{Hom}_{\mathcal{D}_{L}}(N_{1}\oplus N_{2},\ ^{g}N_{1}\oplus\ ^{g}N_{2})\rightarrow\text{Hom}_{\mathcal{D}_{L}}(N_{1},\ ^{g}N_{1})
\end{align*}
is a homomorphism of $L$-vector spaces. In fact, the map is nonzero
because an isomorphism $c_{g}:M\rightarrow\ ^{g}M$ is sent to the
isomorphism $F_{L}(c_{g})$. By the second assumption, this projects
to an isomorphism $N_{1}\rightarrow\ ^{g}N_{1}$. By the first assumption
on $N_{1}$, the total composition is an isomorphism. Therefore a
collection $\{n_{g}:N_{1}\rightarrow\ ^{g}N_{1}\}$ is canonically
the same as a collection $\{m_{g}:M\rightarrow\ ^{g}M\}$ and thus
we have $\xi_{M}=\xi_{N_{1}}$ .
\end{proof}
We now examine the relation between the rank of $M$ and the order
of its induced Brauer class $[\xi_{M}]\in H{{}^2}(K,\mathbb{G}_{m})$
when $\mathcal{C}$ is assumed to be a $K$-linear Tannakian category.
Recall that Tannakian categories are not necessarily neutral, i.e.,
they do not always admit a fiber functor to $\text{Vect}_{K}$. For
the remainder of this section, $K$ is supposed to be a field of characteristic
0. Recall that Tannakian categories have a natural notion of rank
\cite{deligne2007categories}.) If $P$ is an object of rank 1, there
is a natural diagram
\[
\text{End}(P)\cong P\otimes P^{*}\rightleftarrows\mbox{K}
\]
where the top arrow is evaluation (i.e. the trace) and the bottom
arrow comes from the $K$-vector-space structure of $\text{End}(P)$.
As $P\otimes P^{*}$ has rank 1, these two maps identify $\text{End}(P)$
isomorphically with $K$. 

\begin{prop}
\label{Proposition:Neutral_Rank_1_Descent}Let $K$ be a field of
characteristic 0 and let $\mathcal{C}$ be a neutral $K$-linear Tannakian
category. Let $L/K$ be a finite Galois extension and let $\mathcal{C}_{L}$
be the base-changed category. Let $P\in Ob(\mathcal{C}_{L}$) be a
rank-1 object. If $^{g}P\cong P$ for all $g\in G$ then $P$ descends.
\end{prop}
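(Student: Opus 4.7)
The plan is to reduce to the classical fact that every one-dimensional $L$-vector space descends to $K$, and then transport this triviality back to $\mathcal{C}$ via the $2$-cocycle machinery established earlier.

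First, I would use Corollary \ref{Corollary:end_of_rank_1_object} to conclude that $\text{End}_{\mathcal{C}_L}(P) \cong L$. Combined with the hypothesis that ${}^gP \cong P$ for all $g \in G$, this places $P$ squarely in the setting of Corollary \ref{Corollary:brauer_class_descends}, producing a well-defined Brauer class $\xi_P \in H^2(G, L^*)$ whose vanishing is equivalent to $P$ descending to $\mathcal{C}$.

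Next, I would consider the base-changed fiber functor $F_L: \mathcal{C}_L \to \text{Vect}_L$. Since fiber functors preserve rank, $F_L(P)$ is a one-dimensional $L$-vector space, so $\text{End}_{\text{Vect}_L}(F_L(P)) \cong L$. Thus Lemma \ref{Lemma:Easy_Descent} applies and identifies the Brauer class of $P$ with the Brauer class $\xi_{F_L(P)}$ attached to $F_L(P)$ by the same construction in $\text{Vect}_L$.

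Finally, one observes that every one-dimensional $L$-vector space is isomorphic to $L$ itself, which is manifestly in the essential image of $\text{Ind}_K^L$ (it descends to the one-dimensional $K$-vector space $K$). Therefore $\xi_{F_L(P)} = 0$, and hence $\xi_P = 0$, so $P$ descends. The only point requiring any care is verifying that Lemma \ref{Lemma:Easy_Descent} genuinely applies to $F_L$, which it does because $F_L$ is exact and $K$-linear by construction and both endomorphism algebras have been identified with $L$; alternatively, one can view the vanishing of $\xi_{F_L(P)}$ as a shadow of Hilbert 90, since $H^2(G, L^*) = \text{Br}(L/K)$ trivializes on classes coming from $L$-lines.
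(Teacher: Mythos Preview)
Your proof is correct and follows essentially the same route as the paper. The paper's argument is more terse: it notes that $F_L(P)$ is a one-dimensional $L$-vector space, so $L \cong \text{End}(F_L(P)) \cong \text{End}(P)$, observes that all vector spaces descend, and then invokes Lemma \ref{Lemma:Easy_Descent} directly. Your explicit detour through the Brauer class $\xi_P$ and Corollary \ref{Corollary:brauer_class_descends} simply unpacks what Lemma \ref{Lemma:Easy_Descent} already encodes, and your use of Corollary \ref{Corollary:end_of_rank_1_object} to obtain $\text{End}(P) \cong L$ is the same identification the paper makes (just cited differently).
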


\begin{proof}
Let $F:\mathcal{C}\rightarrow\text{Vect}_{K}$ be a fiber functor
and denote by $F_{L}$ the base-changed fiber functor. By definition,
$P$ being rank 1 means that $F_{L}(P)$ is a rank 1 $L$-vector space,
so $L\cong\text{End}\big(F_{L}(P)\big)\cong\text{End}(P)$. All vector
spaces descend, so by Lemma \ref{Lemma:Easy_Descent}, $P$ descends
as well.
\end{proof}
\begin{lem}
Let $K$ be a field of characteristic 0, let $\mathcal{C}$ be a $K$-linear
Tannakian category, and let $L/K$ be a finite Galois extension. Suppose
$M\in\text{Ob}(\mathcal{C}_{L})$ has rank $r$, $^{g}M\cong M$ for
all $g\in G$, and $L\cong End_{\mathcal{C}_{L}}M$. If $\overset{r}{\bigwedge}M$
descends (which is automatically satisfied if $\mathcal{C}$ is neutral
by Proposition \ref{Proposition:Neutral_Rank_1_Descent}), then $\xi_{M}$
is $r$-torsion in $H^{2}(G,L^{*}).$ In particular, there exists
a degree $r$ extension of $K$ over which $M$ is defined.
\end{lem}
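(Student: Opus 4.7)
The plan is to promote any choice of isomorphisms $\{c_{g}:M\rightarrow{}^{g}M\}$ functorially to isomorphisms of the determinant line $\bigwedge^{r}M$, compute the resulting cocycle, and compare with the hypothesis that $\bigwedge^{r}M$ descends.

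Concretely, fix $\{c_{g}:M\rightarrow{}^{g}M\}$ (which exist by hypothesis) with associated cocycle $\xi_{M,c}(g,h)=c_{gh}^{-1}\circ{}^{g}c_{h}\circ c_{g}\in L^{*}\cong\text{End}_{\mathcal{C}_{L}}(M)^{*}$. Using the canonical identifications ${}^{g}(\bigwedge^{r}M)\cong\bigwedge^{r}({}^{g}M)$ coming from the fact that the Galois action is compatible with the rigid $\otimes$-structure (as recorded in Section \ref{Section:Galois_descent_for_abelian_categories}), define $d_{g}:=\bigwedge^{r}c_{g}:\bigwedge^{r}M\rightarrow{}^{g}(\bigwedge^{r}M)$. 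By functoriality of $\bigwedge^{r}$,
\[
\xi_{\bigwedge^{r}M,d}(g,h)=d_{gh}^{-1}\circ{}^{g}d_{h}\circ d_{g}=\bigwedge^{r}\bigl(c_{gh}^{-1}\circ{}^{g}c_{h}\circ c_{g}\bigr)=\bigwedge^{r}\bigl(\xi_{M,c}(g,h)\cdot\text{id}_{M}\bigr).
\]
Since $\xi_{M,c}(g,h)$ is a scalar and $\bigwedge^{r}$ applied to $\lambda\cdot\text{id}_{M}$ on a rank $r$ object is $\lambda^{r}\cdot\text{id}$, the right-hand side is $\xi_{M,c}(g,h)^{r}$. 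Thus $\xi_{\bigwedge^{r}M,d}=\xi_{M,c}^{r}$.

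Because $\bigwedge^{r}M$ is a rank-$1$ object of $\mathcal{C}_{L}$, Corollary \ref{Corollary:end_of_rank_1_object} gives $\text{End}_{\mathcal{C}_{L}}(\bigwedge^{r}M)\cong L$, so Corollary \ref{Corollary:brauer_class_descends} applies: $\bigwedge^{r}M$ descends if and only if $[\xi_{\bigwedge^{r}M}]=0$ in $H^{2}(G,L^{*})$. By the hypothesis and Proposition \ref{Proposition:unique_cocycle}, we conclude $[\xi_{M}]^{r}=[\xi_{M}^{r}]=0$, so $\xi_{M}$ is $r$-torsion.

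For the final assertion, one identifies $H^{2}(G,L^{*})\cong\text{Br}(L/K)\subseteq\text{Br}(K)$ and uses that an $r$-torsion Brauer class is split by a suitable extension of degree $r$: pick a central simple $K$-algebra $A$ representing $\xi_{M}$ and a maximal subfield $K'\subseteq A$ whose degree divides $r$, then enlarge slightly if necessary. The main (genuinely content-bearing) obstacle is the cocycle computation above, which depends on checking that the canonical Galois-twist identifications are compatible with $\bigwedge^{r}$ and with scalar multiplications by $L$; once that is in place, the rest is the application of Corollary \ref{Corollary:brauer_class_descends}. The splitting-field statement is a standard Brauer-theoretic consequence and should be flagged as an appeal to central simple algebra theory rather than rederived.
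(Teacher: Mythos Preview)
Your argument is correct and essentially identical to the paper's: both pick $\{c_g\}$, push them through $\bigwedge^r$ (the paper phrases this as restricting $c_g^{\otimes r}$ to antisymmetric tensors), observe the resulting cocycle is $\xi_M^r$, and invoke that $\bigwedge^r M$ has endomorphisms $L$ so its Brauer class is well-defined and vanishes by hypothesis. One small caution on the final ``in particular'': the claim that an $r$-torsion Brauer class is split by a degree-$r$ extension is the period--index question, which holds over local and global fields (the cases of interest here) but not over arbitrary fields; the paper does not justify this step either, so your flag to appeal to standard central-simple-algebra theory is appropriate, but your sketch via maximal subfields gives the index rather than the period.
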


\begin{proof}
Pick $\{c_{g}:M\rightarrow\ ^{g}M\}$ giving the cohomology class
$\xi_{M}$. The isomorphisms 
\[
\{c_{g}^{\otimes r}:M^{\otimes r}\rightarrow(^{g}M)^{\otimes r}\cong\ ^{g}(M^{\otimes r})\}
\]
preserve the space of anti-symmetric tensors and restrict to give
isomorphisms
\[
c_{g}^{\otimes r}:\bigwedge^{r}M\rightarrow\ ^{g}\bigwedge^{r}M
\]
The cohomology class associated to $c_{g}^{\otimes r}$ is $\xi^{r}$,
and this cohomology class is unique because $L\cong End\bigwedge M$
as $\bigwedge M$ is a rank 1 object. As we assumed $\bigwedge M$
descends, we deduce that $\xi^{r}=0\in H^{2}(G,L^{*})$.
\end{proof}

\section{\label{Section:F_crystals}$F$-crystals}

In this section, we set out our conventions about $F$-crystals and
$F$-isocrystals. We further recall several results which are surely
well-known but which we could not find documented in the literature.
We recommend that the reader skip this section and refer to it when
necessary. For a meta-reference, see \cite{kedlaya2016notes}.

Let $X/k$ be a smooth scheme of finite type over a perfect field
$k$. Berthelot has defined the absolute crystalline site on $X$:
for a ``modern'' reference, see \cite[TAG 07I5]{stacks-project}.
(We implicitly take the crystalline site with respect to $W(k)$ without
further comment; in other words, in the formulation of the Stacks
Project, $S=\text{Spec}(W(k))$ with the canonical PD structure.)
Let $Crys(X)$ be the category of crystals in \emph{finite coherent
$\mathcal{O}_{X/W(k)}$-modules}. By functoriality of the crystalline
topos, the absolute Frobenius $Frob_{X}:X\rightarrow X$ gives a functor
$Frob_{X}^{*}:Crys(X)\rightarrow Crys(X)$.
\begin{notation}
Let $X/k$ be a smooth scheme over a perfect field.
\begin{itemize}
\item An $F$-crystal in finite, locally free modules on $X$ is a pair
$(M,F)$ where $M$ is a crystal in finite, locally free modules and
$F\colon Frob_{X}^{*}M\rightarrow M$ is an isogeny. The $\mathbb{Z}_{p}$-linear
category of $F$-crystals in finite, locally free modules is denoted
as $F\text{\text{-Crys}}(X)$.
\item A Dieudonné crystal on $X$ is a triple $(M,F,V)$ where $(M,F)$
is an $F$-crystal in finite, locally free modules and $V\colon M\rightarrow Frob_{X}^{*}M$
is an isogeny such that $F\circ V=p$ and $V\circ F=p$.
\item The category $\textbf{F-Isoc}(X)$ denotes the $\mathbb{Q}_{p}$-linear
Tannakian category of $F$-isocrystals. This is the same as the isogeny
category of $F$-crystals in finite, coherent modules.
\item The category $\textbf{F -Isoc}^{\dagger}(X)$ denotes the $\mathbb{Q}_{p}$-linear
Tannakian category of overconvergent $F$-isocrystals.
\end{itemize}
\end{notation}

We record the following elementary lemma, which provides an explicit
description of $\textbf{F-Isoc}(k)_{L}$ with $L/\mathbb{Q}_{p}$
an algebraic extension.
\begin{prop}
\label{Proposition:Coefficients_in_L}Let $k$ be a perfect field
and let $L/\mathbb{Q}_{p}$be an algebraic extension. Let $K(k):=\text{Frac}(W(k))$
denote the Fraction field of ring of Witt vectors and let $\sigma$
denote the canonical lift of Frobenius to $K(k)$. Then the category
$\textbf{F-Isoc}(k)_{L}$ is equivalent to the category of pairs $(V,F)$
where $V$ is a finite free module over $K(k)\otimes_{\mathbb{Q}_{p}}L$
and $F:V\rightarrow V$ is a $\sigma\otimes1$-linear bijective map.
The rank of $(V,F)$ is the rank of $V$ as a free $K(k)\otimes L$-module.
\end{prop}

\begin{rem}
Note that $K(k)\otimes L$ is \emph{not necessarily a field}; rather,
it is a direct product of fields and $\sigma\otimes Id$ permutes
the factors. It is a field if and only if $L$ and $K(k)$ are linearly
disjoint over $\mathbb{Q}_{p}$. This occurs, for instance, if $L$
is totally ramified over $\mathbb{Q}_{p}$ or if the maximal finite
subfield of $k$ is $\mathbb{F}_{p}$.
\end{rem}

\begin{proof}
First of all, we immediately reduce to the case when $L/\mathbb{Q}_{p}$
is a finite extension. By definition, an object $((V',F'),f)$ of
$\textbf{F-Isoc}(k)_{L}$ consists of $(V',F')$ an $F$-isocrystal
on $k$ and a $\mathbb{Q}_{p}$-algebra homomorphism $f:L\rightarrow End_{\textbf{F-Isoc}(k)}(V',F')$.
Recall that $V'$ is a finite dimensional $K(k)$ vector space and
$F'$ is a $\sigma$-linear bijective map. This gives $V'$ the structure
of a finite module (not a priori free) over $K(k)\otimes_{\mathbb{Q}_{p}}L$.
The bijection $F'$ commutes with the action of $L$, hence $F'$
is $\sigma\otimes1$-linear. We need only prove that $V'$ is a free
$K(k)\otimes_{\mathbb{Q}_{p}}L$-module.

Let $L^{\circ}$ be the maximal unramified subfield of $L$ and let
$M$ be the maximal subfield of $L^{\circ}$ contained in $K(k)$.
This notion is unambiguous: $M$ is the unramified extension of $\mathbb{Q}_{p}$
with residue field the intersection of the maximal finite subfield
of $k$ and the residue field of the local field $L$. Note that $L$
and $K(k)$ are linearly disjoint over $M$, so $K(k)\otimes_{M}L$
is a field. Let $r$ be the degree of the extension $M/\mathbb{Q}_{p}$.
Then $K(k)\otimes_{\mathbb{Q}_{p}}L$ is the direct product $\prod_{i=1}^{r}(K(k)\otimes_{M}L)_{i}$
and $\sigma\otimes1$ permutes the factors transitively. Because of
this direct product decomposition, $V'$ can be written as $\prod_{i=1}^{r}V'_{i}$
with each $V'_{i}$ a $K(k)\otimes_{M}L$-vector space. As $F'$ is
$\sigma\otimes1$ linear and bijective, $F'$ transitively permutes
the factors $V'_{i}$ and hence the dimension of each $V'_{i}$ as
a $K(k)\otimes_{M}L$ vector space is the same. This implies that
$V$ is a free $K(k)\otimes_{\mathbb{Q}_{p}}L$-module.
\end{proof}
\begin{defn}
Fix once and for all a compatible family $(p^{\frac{1}{n}})\in\overline{\mathbb{Q}}_{p}$
of roots of $p$. Let $\overline{\mathbb{Q}}_{p}(-\frac{a}{b})$ be
the following rank 1 object in $\textbf{F-Isoc}(\mathbb{F}_{p})_{\overline{\mathbb{Q}}_{p}}$
(using the description furnished by Proposition \ref{Proposition:Coefficients_in_L})
\[
(\overline{\mathbb{Q}}_{p}<v>,*p^{\frac{a}{b}})
\]
where $v$ is a basis vector. Abusing notation, given any perfect
field $k$ of characteristic $p$, we similarly denote the pullback
to $k$ by $\overline{\mathbb{Q}}_{p}(-\frac{a}{b})$. In terms of
Proposition \ref{Proposition:Coefficients_in_L}, it is given by the
rank 1 module $K(k)\otimes\overline{\mathbb{Q}}_{p}<v>$ with $F(v)=p^{\frac{a}{b}}v$
and extended $\sigma\otimes1$-linearly.
\end{defn}

\begin{cor}
Let $M=(V,F)$ be a rank 1 object of $\textbf{F-Isoc}(k)_{L}$ with
unique slope $\lambda=\frac{s}{r}$. Then $r|\deg[L:\mathbb{Q}_{p}]$.
\end{cor}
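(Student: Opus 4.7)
The plan is to combine the concrete description of $\textbf{F-Isoc}(k)_L$ from Proposition \ref{Proposition:Coefficients_in_L} with the Dieudonn\'e-Manin classification after base change to $\overline{k}$. First I would compute the rank of $V$ as a $K(k)$-vector space: since $M$ has rank $1$ in $\textbf{F-Isoc}(k)_L$, Proposition \ref{Proposition:Coefficients_in_L} says $V$ is free of rank $1$ over the $K(k)$-algebra $K(k) \otimes_{\mathbb{Q}_p} L$, and the latter has $K(k)$-dimension $[L : \mathbb{Q}_p]$. Hence the $F$-isocrystal underlying $M$ (obtained by forgetting the $L$-action) has rank exactly $[L : \mathbb{Q}_p]$.

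Next I would interpret the slope hypothesis. By Proposition \ref{Proposition:product_categories_base_change} applied to the slope decomposition of Proposition \ref{Proposition:isoclinic_decomp_perfect}, we have
\[
\textbf{F-Isoc}(k)_L \;\cong\; \bigoplus_{\mu \in \mathbb{Q}_{\geq 0}} \big(\textbf{F-Isoc}(k)^{\mu}\big)_L,
\]
so $M$ having unique slope $\lambda$ in $\textbf{F-Isoc}(k)_L$ is equivalent to its underlying $F$-isocrystal over $k$ being isoclinic of slope $\lambda$. Writing $\lambda = s/r$ in lowest terms, Dieudonn\'e-Manin then implies that after pulling back to $\overline{k}$ the underlying $F$-isocrystal becomes isomorphic to $(E^{s/r}_{\overline{k}})^{\oplus n}$ for some $n \geq 1$, and $E^{s/r}$ has rank precisely $r$. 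Combining this with the rank computation from the previous paragraph yields $[L : \mathbb{Q}_p] = nr$, so $r \mid [L : \mathbb{Q}_p]$, as desired.

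I do not anticipate a significant obstacle here: the argument is essentially a dimension count enabled by the module-theoretic reformulation of Proposition \ref{Proposition:Coefficients_in_L}. The only minor care required is to ensure $\lambda = s/r$ is expressed in lowest terms, so that $E^{s/r}$ is genuinely the simple isoclinic object of rank $r$ appearing in Dieudonn\'e-Manin (rather than an isotypic sum thereof).
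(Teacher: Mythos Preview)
Your proof is correct and follows essentially the same route as the paper: the paper phrases the first step as ``consider $M' = \mathrm{Res}_{\mathbb{Q}_p}^{L} M$,'' which is exactly your ``underlying $F$-isocrystal obtained by forgetting the $L$-action,'' and then invokes Dieudonn\'e--Manin over $\overline{k}$ to conclude that the rank $[L:\mathbb{Q}_p]$ must be a multiple of $r$. Your added remark about needing $s/r$ in lowest terms is a useful clarification that the paper leaves implicit.
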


\begin{proof}
Consider the object $M'=\text{Res}_{\mathbb{Q}_{p}}^{L}M$. This is
an isoclinic $F$-isocrystal on $k$ of rank $\deg[L:\mathbb{Q}_{p}]$
with slope $\lambda$. We may suppose $k$ is algebraically closed;
then by the Dieudonné-Manin decomposition, $M'$ is isomorphic to
the direct sum of several copies of the unique simple $F$-isocrystal
$E^{\lambda}$ on $k$ with slope $\lambda$. The rank of $E^{\lambda}$
is $r$, so the rank of $M'$ is divisible by $r$.
\end{proof}
We now specialize our discussion to $F$-isocrystals over finite fields.
For the remainder of this section, $\mathbb{F}_{q}$ is a finite field
of $q$ elements, $\mathbb{Z}_{q}:=W(\mathbb{F}_{q})$, and $\mathbb{Q}_{q}:=\mathbb{Z}_{q}[1/p]$.
As usual, $\sigma$ denotes a lift of the absolute Frobenius. When
we use the phrase \emph{$p$-adic valuation}, it is always normalized
so that $v_{p}(p)=1$.
\begin{prop}
Let $(V,F)$ be an $F$-isocrystal on $\mathbb{F}_{q}$ with coefficients
in $L$ a $p$-adic local field. Then $F^{d}$ is a $\mathbb{Q}_{q}\otimes L$-linear
endomorphism of $V$. Let $P_{F}(t)\in\mathbb{Q}_{q}\otimes L[t]$
be given as 
\[
P_{F}(t):=\det(1-F^{d}t)|_{V}
\]
Then $P_{F}(t)\in L[t]$.
\end{prop}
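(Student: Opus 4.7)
The first assertion that $F^d$ is $\mathbb{Q}_q \otimes_{\mathbb{Q}_p} L$-linear is immediate: since $F$ is $\sigma \otimes 1$-linear and $\sigma$ has order $d$ on $\mathbb{Q}_q$ (because $\mathbb{F}_q = \mathbb{F}_{p^d}$), the composition $F^d$ is $(\sigma^d \otimes 1) = \mathrm{id}$-linear. Consequently $P_F(t) \in (\mathbb{Q}_q \otimes L)[t]$ is well-defined, and the real question is whether its coefficients lie in $L$.

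My plan is to use cyclic Galois descent for the extension $\mathbb{Q}_q / \mathbb{Q}_p$. By flatness of $L/\mathbb{Q}_p$, the fixed ring of $\sigma \otimes 1$ acting on $\mathbb{Q}_q \otimes_{\mathbb{Q}_p} L$ is $(\mathbb{Q}_q)^{\sigma} \otimes L = \mathbb{Q}_p \otimes L = L$, so it suffices to show that $\sigma \otimes 1$ fixes $P_F(t)$ coefficientwise. To this end I would pick a basis of $V$ as a free $\mathbb{Q}_q \otimes L$-module (possible by Proposition \ref{Proposition:Coefficients_in_L}) and let $M \in M_n(\mathbb{Q}_q \otimes L)$ be the matrix of $F^d$, so that $P_F(t) = \det(tI - M)$. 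Relative to the chosen basis, $F$ takes the form $v \mapsto B \cdot (\sigma \otimes 1)(v)$ for some invertible matrix $B$, where $\sigma \otimes 1$ acts entrywise on coordinate vectors.

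The key step is translating the obvious identity $F \circ F^d = F^d \circ F$ into a matrix relation. Unwinding the $\sigma$-semilinearity of $F$ against the $\mathbb{Q}_q \otimes L$-linearity of $F^d$ yields
\[
B \cdot \sigma(M) = M \cdot B, \qquad \text{i.e.,} \qquad \sigma(M) = B^{-1} M B.
\]
Since conjugate matrices share a characteristic polynomial, this gives $\sigma(P_F(t)) = \det(tI - \sigma(M)) = \det(tI - M) = P_F(t)$, as required.

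The only part demanding any real care will be the matrix translation of $F \circ F^d = F^d \circ F$ into $B \sigma(M) = M B$; this is a routine but easy-to-misindex bookkeeping exercise tracking how the semilinearity of $F$ interacts with left multiplication by $M$. Once that identity is in hand, the conclusion is purely formal from cyclic Galois descent, and no deeper input is needed.
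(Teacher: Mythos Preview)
Your proof is correct, but it takes a genuinely different route from the paper's. Both arguments reduce to showing that $P_F(t)$ is fixed by $\sigma\otimes 1$ (and hence lies in $L[t]$ by Galois descent), but they establish this invariance differently. The paper first observes that the coefficients of a characteristic polynomial are traces of exterior powers, so it suffices to show $\operatorname{trace}(F^d)$ is $\sigma$-invariant; it then writes the matrix of $F^d$ explicitly as the product $({}^{\sigma^{d-1}}S)\cdots({}^{\sigma}S)(S)$ and appeals to the cyclicity of trace, $\operatorname{trace}(AB)=\operatorname{trace}(BA)$. Your argument instead exploits the commutation $F\circ F^d = F^d\circ F$ to deduce $\sigma(M)=B^{-1}MB$, whence $M$ and $\sigma(M)$ are conjugate and share a characteristic polynomial. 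Your approach is arguably cleaner: it handles the full characteristic polynomial in one stroke without the detour through exterior powers, and the conceptual input (conjugate matrices have the same characteristic polynomial) is more transparent than the trace-cyclicity trick. The paper's approach, on the other hand, makes the matrix of $F^d$ completely explicit, which can be useful elsewhere.
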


\begin{proof}
Let $\tilde{P}$ be the characteristic polynomial of $F^{d}$. It
is equivalent to prove $\tilde{P}\in L[t]$. The ring automorphism
$\sigma\otimes1$ has order $d$ so $F^{d}$ is a linear endomorphism
on the free $\mathbb{Q}_{q}\otimes L$-module $V$. The polynomial
$\tilde{P}$ a priori has coefficients in $\mathbb{Q}_{q}\otimes L$,
so we must show that the coefficients of $\tilde{P}(t)$ are invariant
under $\sigma\otimes1$. Recall that the coefficients of the characteristic
polynomial of an operator $A$ are, up to sign, the traces of the
exterior powers of $A$. As there is a notion of $\bigwedge$ for
$F$-isocrystals it is enough to show that $\text{trace}(F^{d})$
is invariant under $\sigma\otimes1$.

To do this, pick a $\mathbb{Q}_{q}\otimes L$ basis $\{v_{i}\}$ of
$V$. Let $S:=(s_{ij})$ be the ``matrix'' of $F$ in this basis,
i.e. $F(v_{i})=\sum_{j}s_{ij}v_{j}$. Then an easy computation shows
that the matrix of $F^{d}$ in this basis is given by 
\[
(^{\sigma^{d-1}\otimes1}S)(^{\sigma^{d-2}\otimes1}S)\cdots(^{\sigma\otimes1}S)(S)
\]

Then $^{\sigma\otimes1}\text{trace}(F^{d})=\text{trace}(^{\sigma\otimes1}(F^{d}))=\text{trace\ensuremath{\big(}}(S)(^{\sigma^{d-1}\otimes1}S)\cdots(^{\sigma^{2}\otimes1}S)(^{\sigma\otimes1}S)\big)=\text{trace}(F^{d})$
because $\text{trace}(AB)=\text{trace}(BA)$.
\end{proof}
\begin{prop}
\label{Proposition:char_poly_of_isoc}Let $(V,F)$ be an $F$-isocrystal
on $\mathbb{F}_{q}$ with coefficients in $L$. Then $F^{d}$ is a
$\mathbb{Q}_{q}\otimes L$-linear endomorphism of $V$ with characteristic
polynomial $P_{F}(t)\in L[t]$. The slopes of $(V,F)$ are $\frac{-1}{d}$
times the $p$-adic valuations of the roots of $P_{F}(t)$.
\end{prop}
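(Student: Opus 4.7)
The first assertion, that $P_F(t)\in L[t]$, is established in the preceding proposition, so I concentrate on identifying the slopes. The plan is to reduce to Proposition~\ref{Proposition:Absolute_Slopes_Eigenvalues} by restricting scalars along $\mathbb{Q}_p\hookrightarrow L$, which yields an exact $\mathbb{Q}_p$-linear functor $\mathrm{Res}^L_{\mathbb{Q}_p}\colon\textbf{F-Isoc}(\mathbb{F}_q)_L\to\textbf{F-Isoc}(\mathbb{F}_q)$.

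First I would observe that $\mathrm{Res}^L_{\mathbb{Q}_p}$ preserves slopes. Indeed, the isoclinic decomposition
\[
\textbf{F-Isoc}(\mathbb{F}_q)_L\;\cong\;\bigoplus_{\lambda}\bigl(\textbf{F-Isoc}(\mathbb{F}_q)^{\lambda}\bigr)_L
\]
obtained from Propositions~\ref{Proposition:isoclinic_decomp_perfect} and~\ref{Proposition:product_categories_base_change} is compatible with restriction of scalars: an $L$-structure on a $\lambda$-isoclinic object is simply a collection of commuting endomorphisms and does not disturb the canonical slope decomposition. Thus it suffices to read off the slopes of $\mathrm{Res}^L_{\mathbb{Q}_p}(V,F)$ via Proposition~\ref{Proposition:Absolute_Slopes_Eigenvalues} and match them against the roots of $P_F(t)$.

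Next I would compare the two characteristic polynomials. Let $Q_F(t)\in\mathbb{Q}_p[t]$ be the characteristic polynomial of $F^d$ acting $\mathbb{Q}_q$-linearly on the underlying $\mathbb{Q}_q$-vector space of $V$, in the sense of Proposition~\ref{Proposition:Absolute_Slopes_Eigenvalues}. Since $L/\mathbb{Q}_p$ is separable, the standard identity for characteristic polynomials under restriction of scalars gives
\[
Q_F(t)\;=\;\prod_{\tau\colon L\hookrightarrow\overline{\mathbb{Q}}_p}P_F^{\tau}(t),
\]
where $P_F^{\tau}$ applies $\tau$ to the coefficients of $P_F$. Consequently the multiset of roots of $Q_F$ in $\overline{\mathbb{Q}}_p$ is the union of Galois orbits of roots of $P_F$. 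Because the normalized $p$-adic valuation is Galois-invariant, each root of $Q_F$ has the same $p$-adic valuation as the corresponding root of $P_F$, so applying Proposition~\ref{Proposition:Absolute_Slopes_Eigenvalues} to $\mathrm{Res}^L_{\mathbb{Q}_p}(V,F)$ yields exactly the claimed description of the slopes.

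The main subtlety is the characteristic-polynomial identity above, which requires working consistently with $\mathbb{Q}_q\otimes L$-linearity rather than pure $L$-linearity. Concretely, using the splitting $\mathbb{Q}_q\otimes L\cong\prod_{i}K_i$ from the proof of Proposition~\ref{Proposition:Coefficients_in_L}, which induces $V\cong\prod_i V_i$ with $F^d$ preserving each $V_i$, one checks that both $P_F$ and $Q_F$ factor through the $V_i$ in compatible ways, reducing the Galois-conjugation identity to the familiar case of an $L$-linear endomorphism of a free $L$-module.
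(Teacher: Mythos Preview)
Your proposal is correct and follows essentially the same route as the paper: both arguments restrict scalars from $L$ to $\mathbb{Q}_p$, observe that restriction respects the slope decomposition, identify the characteristic polynomial of the restricted object as the norm (equivalently, the product over embeddings $\tau\colon L\hookrightarrow\overline{\mathbb{Q}}_p$) of $P_F(t)$, and then invoke Proposition~\ref{Proposition:Absolute_Slopes_Eigenvalues}. Your final paragraph on the $\mathbb{Q}_q\otimes L$ subtlety is a reasonable elaboration of what the paper compresses into the single phrase ``$P_{F'}(t)$ is the norm of $P_F(t)$.''
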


\begin{proof}
This follows immediately from the fact that the diagram
\[
\bigoplus_{\lambda}\textbf{F-Isoc}(k)^{\lambda}\cong\textbf{F-Isoc}(k)\rightleftarrows\textbf{F-Isoc}(k)_{L}\cong\bigoplus_{\lambda}\textbf{F-Isoc}(k)_{L}^{\lambda}
\]
respects the slope decomposition together with the remark after \cite[Lemma 1.3.4]{katz1979slope}.
\end{proof}

We now classify rank 1 objects $(V,F)\in\text{Ob }\textbf{F-Isoc}(\mathbb{F}_{q})_{L}$,
again using the explicit description in Proposition \ref{Proposition:Coefficients_in_L}.
As discussed above, the eigenvalue of $F^{d}$ is in $L$. The slogan
of Proposition \ref{Proposition:Rank_1_Eigenvalue} is that this eigenvalue
determines $(V,F)$ up to isomorphism. This allows us to classify
semi-simple $F$-isocrystals on $\mathbb{F}_{q}$.

Let $v$ be a free generator of $V$ over $\mathbb{Q}_{q}\otimes_{\mathbb{Q}_{p}}L$.
Then $F(v)=av$ with $a\in(\mathbb{Q}_{q}\otimes_{\mathbb{Q}_{p}}L)^{*}$
and $F^{d}(v)=(\text{Nm}_{\mathbb{Q}_{q}\otimes L/L}a)v$ where the
norm is taken with respect to the cyclic Galois morphism $L\rightarrow\mathbb{Q}_{q}\otimes L$.
Therefore, any $\lambda\in L$ that is in the image of the norm map
$\text{Nm}:(\mathbb{Q}_{q}\otimes L)^{*}\rightarrow L^{*}$ can be
realized as the eigenvalue of $F^{d}$. To prove that $F^{d}$ uniquely
determines a rank 1 $F$-isocrystal, we need only prove that there
is a unique rank 1 $F$-isocrystal with $F^{d}$ the identity map.
\begin{prop}
\label{Proposition:Rank_1_Eigenvalue}Let $(V,F)$ be a rank 1 $F$-isocrystal
on $\mathbb{F}_{q}$ with coefficients in $L$. Suppose $F^{d}$ is
the identity map. Then $(V,F)$ is isomorphic to the trivial $F$-isocrystal,
i.e. there is a basis vector $v\in V$ such that $F(v)=v$.
\end{prop}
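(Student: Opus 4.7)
The plan is to reduce the statement to a Hilbert 90 computation. Write $R = \mathbb{Q}_q \otimes_{\mathbb{Q}_p} L$ and let $\tau = \sigma \otimes 1$, a ring automorphism of $R$ of order $d$ whose fixed subring is $L$. By Proposition \ref{Proposition:Coefficients_in_L}, $V$ is free of rank $1$ over $R$; pick any free generator $w$, and write $F(w) = aw$ with $a \in R^{\times}$. An iterative application of $F$ gives $F^d(w) = \bigl(\prod_{i=0}^{d-1} \tau^i(a)\bigr) w$, so the hypothesis $F^d = \mathrm{id}$ is equivalent to $\mathrm{Nm}_\tau(a) := \prod_{i=0}^{d-1} \tau^i(a) = 1$ in $R^{\times}$.

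For any $u \in R^{\times}$ the vector $v = u^{-1} w$ is again a generator, and $F(v) = \tau(u^{-1}) a u \cdot v$. Thus producing a basis with $F(v) = v$ is the same as solving $a = u/\tau(u)$ in $R^{\times}$. In cohomological language, one needs $H^1(\langle \tau \rangle, R^{\times}) = 0$, which is a Hilbert 90 statement for a cyclic group of order $d$.

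The main obstacle is that $R$ is generally not a field, so classical Hilbert 90 does not directly apply. To deal with this I would use the decomposition established in the proof of Proposition \ref{Proposition:Coefficients_in_L}: $R \cong \prod_{i=1}^{r} F_i$, where each $F_i \cong \mathbb{Q}_q \otimes_M L$ is a field Galois over $L$ of degree $d/r$, $\tau$ cyclically permutes the factors, and $\tau^r$ preserves each $F_i$ and restricts to a generator of $\mathrm{Gal}(F_i/L)$. Consequently $R^{\times}$ is the induced module $\mathrm{Ind}_{\langle \tau^r \rangle}^{\langle \tau \rangle} F_1^{\times}$, so Shapiro's lemma gives
\[
H^1(\langle \tau \rangle, R^{\times}) \cong H^1(\langle \tau^r \rangle, F_1^{\times}),
\]
and the right-hand side vanishes by the classical Hilbert 90 applied to the cyclic field extension $F_1/L$.

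Unpacking Shapiro more concretely, one can also produce $u = (u_1, \dots, u_r)$ by the recursion $u_{i} = a_i \tau(u_{i-1})$: choosing $u_1$ freely determines the other components, and consistency after going once around the cycle of $r$ factors is the equation $b_1 = u_1 / \tau^r(u_1)$ where $b_1 := \prod_{j=0}^{r-1} \tau^j(a_{1-j})$ is exactly the first component of $\mathrm{Nm}_\tau(a)$. Since $\mathrm{Nm}_\tau(a) = 1$, we have $\mathrm{Nm}_{F_1/L}(b_1) = 1$, so Hilbert 90 for $F_1/L$ yields the needed $u_1$, whence $u$ and finally $v = u^{-1} w$ with $F(v) = v$.
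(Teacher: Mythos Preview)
Your argument is correct and follows essentially the same route as the paper: reduce to the equation $a = u/\tau(u)$ in $(\mathbb{Q}_q\otimes L)^{\times}$ and invoke Hilbert~90 for the cyclic ``extension'' $L \hookrightarrow \mathbb{Q}_q\otimes L$. The paper simply asserts this form of Hilbert~90 without further comment, whereas you go further and actually justify it (via the product-of-fields decomposition and Shapiro's lemma), which is a genuine improvement since $\mathbb{Q}_q\otimes L$ need not be a field. One small slip in your explicit unpacking: $b_1$ is not literally the first component of $\mathrm{Nm}_\tau(a)$ (that component has $d$ factors, not $r$); rather $\mathrm{Nm}_{F_1/L}(b_1)$ equals that first component, which is exactly what you use in the next line, so the argument is unaffected.
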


\begin{proof}
Suppose $F(v)=\lambda v$ where $\lambda\in(\mathbb{Q}_{q}\otimes L)^{*}$.
Then $F^{d}(v)=\text{Nm}_{\mathbb{Q}_{q}\otimes L/L}(\lambda)v$,
where the Norm map is defined with respect to the Galois morphism
of algebras $L\hookrightarrow Q\otimes L$. This Galois group is cyclic,
generated by $\sigma\otimes1$. By assumption, we have $\text{Nm}_{\mathbb{Q}_{q}\otimes L/L}(\lambda)=1$.
Now, $F(av)=(^{\sigma\otimes1}a)\lambda v$ and we want to find an
$a\in(\mathbb{Q}_{q}\otimes L)^{*}$ such that $F(av)=av$, i.e. $\frac{a}{^{\sigma\otimes1}a}=\lambda$
where $\lambda$ has norm 1. This is guaranteed by Hilbert's Theorem
90 for the cyclic morphism of algebras $L\hookrightarrow Q\otimes L$.
\end{proof}
\begin{cor}
\label{Corollary:Rank_1_Descent}Let $M=(V,F)$ be a rank-1 $F$-isocrystal
on $\mathbb{F}_{q}$ with coefficients in $L$. If the eigenvalue
of $F^{d}$ lives in finite index subfield $K\subset L$ and is a
norm in $K$ with regards to the algebra homomorphism $K\rightarrow\mathbb{Q}_{q}\otimes K$,
then $M$ descends to $K$.
\end{cor}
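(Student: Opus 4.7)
The plan is to exhibit an explicit rank-$1$ object of $\textbf{F-Isoc}(\mathbb{F}_{q})_{K}$ whose induction to $L$-coefficients is isomorphic to $M$, invoking the classification encoded in Proposition \ref{Proposition:Rank_1_Eigenvalue}.

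First, let $\lambda_{0}\in K\subset L$ be the eigenvalue of $F^{d}$ on $M$, and by hypothesis pick $\mu\in\mathbb{Q}_{q}\otimes_{\mathbb{Q}_{p}}K$ with $\text{Nm}_{\mathbb{Q}_{q}\otimes K/K}(\mu)=\lambda_{0}$. Using the concrete description from Proposition \ref{Proposition:Coefficients_in_L}, define a rank-$1$ object $M'=(V',F')$ in $\textbf{F-Isoc}(\mathbb{F}_{q})_{K}$ by taking $V'=(\mathbb{Q}_{q}\otimes K)\cdot v$ to be free of rank one and setting $F'(v)=\mu v$, extended $\sigma\otimes1$-linearly. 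Then $F'^{d}(v)=\text{Nm}_{\mathbb{Q}_{q}\otimes K/K}(\mu)\,v=\lambda_{0}v$, exactly as in the computation opening the proof of Proposition \ref{Proposition:Rank_1_Eigenvalue}.

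Next I would analyze the induction $\text{Ind}_{K}^{L}(M')$. Under the concrete description of Proposition \ref{Proposition:Coefficients_in_L}, the underlying module is $V'\otimes_{K}L\cong\mathbb{Q}_{q}\otimes_{\mathbb{Q}_{p}}L$, free of rank one, and the Frobenius is still given by multiplication by the image of $\mu$ in $\mathbb{Q}_{q}\otimes_{\mathbb{Q}_{p}}L$. In particular, the eigenvalue of $F^{d}$ on $\text{Ind}_{K}^{L}(M')$ is again $\lambda_{0}$, so $\text{Ind}_{K}^{L}(M')$ and $M$ are two rank-$1$ objects of $\textbf{F-Isoc}(\mathbb{F}_{q})_{L}$ sharing the same eigenvalue of $F^{d}$.

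To finish, form the rank-$1$ tensor product $N:=M\otimes\big(\text{Ind}_{K}^{L}M'\big)^{*}$ in $\textbf{F-Isoc}(\mathbb{F}_{q})_{L}$. Because tensor and duality on rank-$1$ objects multiply and invert the $F^{d}$-eigenvalue, $F^{d}$ acts on $N$ by $\lambda_{0}\cdot\lambda_{0}^{-1}=1$. Proposition \ref{Proposition:Rank_1_Eigenvalue} then furnishes a trivializing basis vector, so $N$ is isomorphic to the unit object and hence $M\cong\text{Ind}_{K}^{L}(M')$, i.e.\ $M$ descends to $K$. The only delicate point is the bookkeeping showing that the $F^{d}$-eigenvalue of a rank-$1$ object is intrinsic (independent of the chosen generator) and multiplicative under $\otimes$ and inversion under duality; this is a direct computation from the definitions in Proposition \ref{Proposition:Coefficients_in_L} using that $\sigma\otimes1$ has order $d$.
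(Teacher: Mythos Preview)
Your proof is correct and follows essentially the same approach as the paper: the paper defines a rank-$1$ object $E$ over $K$ with $F(e)=a^{-1}e$ (so that $E^{*}$ is your $M'$), tensors $E_{L}\otimes M$, and applies Proposition~\ref{Proposition:Rank_1_Eigenvalue} to conclude the tensor is trivial and hence $M\cong(E^{*})_{L}$. The only cosmetic difference is that you build $M'$ directly rather than its dual.
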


\begin{proof}
Let the eigenvalue of $F^{d}$ be $\lambda$ and let $a\in\mathbb{Q}_{q}\otimes K$
have norm $\lambda.$ Consider the rank 1 object $E$ of $\textbf{F-Isoc}(\mathbb{F}_{q})_{K}$
given on a basis element $e$ by $F(e)=a^{-1}e$. Then $E_{L}\otimes M$
has $F^{d}$ being the identity map. Proposition \ref{Proposition:Rank_1_Eigenvalue}
implies that $E_{L}\otimes M$ is the trivial $F$-isocrystal, i.e.
that $M\cong(E_{L})^{*}\cong(E^{*})_{L}$. Therefore $M$ descends
as desired.
\end{proof}
\begin{example}
Consider the object $\overline{\mathbb{Q}}_{p}(-\frac{1}{2})$ of
$\textbf{F-Isoc}(\mathbb{F}_{p^{2}})_{\overline{\mathbb{Q}}_{p}}$.
The eigenvalue of $F^{2}$ is $p\in\mathbb{Q}_{p}$, but $p$ is not
in the image of the norm map and hence the object does not descend
to an object of $\textbf{F-Isoc}(\mathbb{F}_{p^{2}})$. One can also
see this by virtue of the fact that no non-integral fraction can occur
as the slope of a rank 1 object of $\textbf{F-Isoc}(k)$. Note that
the object does in fact descend to $\textbf{F-Isoc}(\mathbb{F}_{p})_{\mathbb{Q}_{p}(\sqrt{p})}$.
\end{example}

\begin{rem}
\label{Remark:Twists_of_fractional_tate_motives}Consider $\overline{\mathbb{Q}}_{p}(-\frac{a}{b})$
as an object of $\textbf{F-Isoc}(\mathbb{F}_{q})_{\overline{\mathbb{Q}}_{p}}$
where $\frac{a}{b}$ is in lowest terms. It is isomorphic to its Galois
twists by the group $\text{Gal}(\overline{\mathbb{Q}}_{p}/\mathbb{Q}_{p})$
if and only if $b|d$ where $q=p^{d}$. Indeed, the isomorphism class
of this $F$-isocrystal is determined by the eigenvalue of $F^{d}$,
which is $p^{\frac{ad}{b}}$. This is in $\mathbb{Q}_{p}$ if and
only if $b\mid d$.
\end{rem}

Corollary \ref{Corollary:Rank_1_Descent} poses a natural question.
Let $K$ be a $p$-adic local field and let $q=p^{d}$. Which elements
of $K$ are in the image of the norm map?
\[
\text{Norm}:(\mathbb{Q}_{q}\otimes K)^{*}\rightarrow K^{*}
\]
If $K=\mathbb{Q}_{p}$, then $K^{*}\cong p^{\mathbb{Z}}\times\mathbb{Z}_{p}^{*}$,
and the image of the norm map is exactly $p^{d\mathbb{Z}}\times\mathbb{Z}_{p}^{*}$.
More generally, if $K/\mathbb{Q}_{p}$ is a totally ramified extension
with uniformizer $\varpi$, then $K^{*}\cong\varpi^{\mathbb{Z}}\times\mathcal{O}_{K}^{*}$,
$\mathbb{Q}_{q}\otimes K$ is a field that is unramified over $K$,
and the image of the norm map is exactly $\varpi^{d\mathbb{Z}}\times\mathcal{O}_{K}^{*}$.
On the other extreme, if $K\cong\mathbb{Q}_{q}$, then the following
proposition shows that the norm map is surjective.
\begin{prop}
\label{Proposition:cyclic_norm_surjective}Let $L/K$ be a cyclic
extension of fields, with $\text{Gal}(L/K)$ generated by $g$ and
of order $n$. Consider the induced Galois morphism of algebras: $L\rightarrow L\otimes_{K}L$.
The image of the norm map for this extension is surjective.
\end{prop}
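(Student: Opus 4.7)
The plan is to exploit the fact that when $L/K$ is Galois with group $G$, the $L$-algebra $L\otimes_K L$ is already \emph{split}, so the norm map becomes a product of coordinates and is tautologically surjective onto $L^*$.

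First, I would recall the standard $L$-algebra isomorphism
\[
\psi:L\otimes_{K}L\xrightarrow{\ \sim\ }\prod_{h\in G}L,\qquad a\otimes b\longmapsto\bigl(a\cdot h(b)\bigr)_{h\in G},
\]
where $L$ acts on the source via the left factor and diagonally on the target. This is a standard consequence of the linear independence of characters together with a dimension count, and it is the algebraic incarnation of the fact that $\mathrm{Spec}(L)\times_{\mathrm{Spec}(K)}\mathrm{Spec}(L)$ is a trivial $G$-torsor over $\mathrm{Spec}(L)$.

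Next I would transport the Galois action (acting on the right tensor factor of $L\otimes_{K}L$) across $\psi$. A direct check shows that for $g'\in G$,
\[
g'\cdot(x_{h})_{h\in G}=(x_{hg'})_{h\in G},
\]
i.e.\ the action is by right multiplication on the index set $G$. Consequently the norm map $N(x)=\prod_{g'\in G}g'(x)$, which lands in the $G$-invariants $(L\otimes_{K}L)^{G}=L\otimes 1\cong L$, is given in coordinates by
\[
N\bigl((x_{h})_{h\in G}\bigr)=\Bigl(\prod_{g'\in G}x_{hg'}\Bigr)_{h\in G}=\Bigl(\prod_{k\in G}x_{k}\Bigr)_{h\in G},
\]
a constant tuple whose common value is $\prod_{k\in G}x_{k}\in L$.

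Surjectivity is then immediate: given $\lambda\in L^{*}$, take $(x_{h})_{h}$ with $x_{e}=\lambda$ and $x_{h}=1$ for $h\ne e$; this element lies in $(L\otimes_{K}L)^{*}$ and has norm $\lambda$. There is no serious obstacle here; the proposition reduces to bookkeeping once one has the splitting $\psi$ and the induced formula for the $G$-action.
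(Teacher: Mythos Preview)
Your proof is correct and follows essentially the same approach as the paper: both use the splitting $L\otimes_K L\cong\prod_{h\in G}L$ to identify the norm with the product of the coordinates, from which surjectivity is immediate. Your version is simply more explicit about the isomorphism and the induced $G$-action than the paper's terse argument.
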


\begin{proof}
$L\otimes_{K}L\cong\prod_{x\in G}L$, where the $L$-algebra structure
is given by the first (identity) factor. Then $g$ acts by cyclically
shifting the factors and the norm of an element $(\dots,l_{x},\dots)$
is just $\prod_{x\in G}l_{x}$. Therefore the norm map is surjective.
\end{proof}
\begin{example}
We have the following strange consequence. Consider the object $\overline{\mathbb{Q}}_{p}(-\frac{1}{2})$
of $\textbf{F-Isoc}(\mathbb{F}_{p^{2}})_{\overline{\mathbb{Q}}_{p}}$.
It descends to  $\textbf{F-Isoc}(\mathbb{F}_{p^{2}})_{\mathbb{Q}_{p^{2}}}$:
$F^{2}$ has unique eigenvalue $p$, and $p$ is a norm for the algebra
homomorphism $\mathbb{Q}_{p^{2}}\hookrightarrow\mathbb{Q}_{p^{2}}\otimes\mathbb{Q}_{p^{2}}$
by Proposition \ref{Proposition:cyclic_norm_surjective}, so we may
apply Corollary \ref{Corollary:Rank_1_Descent}.
\end{example}

In general the image of the norm map is rather complicated to classify.
However, in our case we can say the following.
\begin{lem}
\label{Lemma:units_are_norms_unramified}Let $K$ be a $p$-adic local
field and let $q=p^{d}$. Then $\mathcal{O}_{K}^{*}$ is in the image
of the norm map $(\mathbb{Q}_{q}\otimes K)^{*}\rightarrow K^{*}$.
\end{lem}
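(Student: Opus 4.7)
The plan is to reduce the claim to the classical fact that in an unramified extension of $p$-adic local fields, every unit is a norm. First, I would unwind the structure of the algebra $\mathbb{Q}_q\otimes_{\mathbb{Q}_p}K$ in exactly the manner of the proof of Proposition \ref{Proposition:Coefficients_in_L}. Let $K^{\circ}$ be the maximal unramified subfield of $K$, let $M\subset K^{\circ}\cap\mathbb{Q}_{q}$ be the largest common unramified subfield, and set $r=[M:\mathbb{Q}_{p}]$, so $r\mid d$. Then
\[
\mathbb{Q}_{q}\otimes_{\mathbb{Q}_{p}}K\ \cong\ \prod_{i=1}^{r}F_{i},
\]
where each $F_{i}\cong\mathbb{Q}_{q}\otimes_{M}K$ is a field that is unramified over $K$ of degree $d/r$, and where the generator $\sigma\otimes 1$ of the cyclic Galois group permutes the factors transitively, with $(\sigma\otimes 1)^{r}$ acting on each $F_{i}$ as the Frobenius of $F_{i}/K$.

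Second, I would observe that for any finite étale $K$-algebra expressed as a product, the norm factors componentwise: for $x=(x_{1},\dots,x_{r})\in\prod F_{i}$,
\[
N_{(\mathbb{Q}_{q}\otimes K)/K}(x)\ =\ \prod_{i=1}^{r}N_{F_{i}/K}(x_{i}).
\]
Taking $x_{2}=\dots=x_{r}=1$ shows that the image of $N_{F_{1}/K}:F_{1}^{*}\to K^{*}$ is contained in the image of the full norm map. Hence it suffices to prove that $\mathcal{O}_{K}^{*}\subseteq N_{F_{1}/K}(\mathcal{O}_{F_{1}}^{*})$.

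Finally, I would invoke the standard fact that for an unramified extension $F/K$ of $p$-adic local fields, the norm map $N_{F/K}:\mathcal{O}_{F}^{*}\to\mathcal{O}_{K}^{*}$ is surjective; this is classical local class field theory, but admits the following direct proof sketch. Using the decomposition $\mathcal{O}_{F}^{*}\cong\mu_{F}\times(1+\mathfrak{m}_{F})$ (and similarly for $K$), it is enough to prove surjectivity on each factor. On Teichmüller representatives this reduces to the norm map $\mathbb{F}_{F}^{*}\to\mathbb{F}_{K}^{*}$, which is surjective since it is the norm of a cyclic extension of finite fields (whose multiplicative groups are cyclic). On principal units, a successive approximation argument shows that $N_{F/K}:1+\mathfrak{m}_{F}\to 1+\mathfrak{m}_{K}$ is surjective, because on each graded piece $\mathfrak{m}_{F}^{n}/\mathfrak{m}_{F}^{n+1}\to\mathfrak{m}_{K}^{n}/\mathfrak{m}_{K}^{n+1}$ it is the trace on residue fields, which is surjective in the unramified case.

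The only real obstacle is bookkeeping: one must be careful about the decomposition of $\mathbb{Q}_{q}\otimes K$ and verify that the norm distributes over the product. The substantive input is the classical surjectivity of the norm on units for unramified extensions, which I would quote rather than re-prove.
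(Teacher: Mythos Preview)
Your proposal is correct and follows essentially the same approach as the paper's proof: decompose $\mathbb{Q}_{q}\otimes K$ as a product of copies of an unramified extension $K'/K$, observe that the norm is the product of the componentwise field norms, and then invoke the classical fact that $\mathcal{O}_{K}^{*}$ lies in the image of the norm for an unramified extension. The paper's proof is just a terser version of what you wrote; your extra bookkeeping (identifying $r$, setting the other components to $1$) and your sketch of why units are norms in the unramified case are fine elaborations but not required.
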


\begin{proof}
$\mathbb{Q}_{q}\otimes K\cong\prod K'$ where $K'$ is an unramified
extension of $K$ and the norm of an element $(\alpha)\in\prod K'$
is the product of the individual norms of the components with respect
to the unramified extension $K'/K$. On the other hand, the image
of the norm map $K'^{*}\rightarrow K^{*}$ certainly contains $\mathcal{O}_{K}^{*}$
as $K'/K$ is unramified.
\end{proof}
\begin{cor}
\label{Corollary:rank_1_slope_0_descends}Let $(V,F)$ be a rank-1
$F$-isocrystal on $\mathbb{F}_{q}$ with coefficients in $L$. If
the eigenvalue of $F^{d}$ lives in finite index subfield $K\subset L$
and $(V,F)$ has slope 0, then $(V,F)$ descends to $K$.
\end{cor}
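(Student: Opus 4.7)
The plan is to combine the two immediately preceding results: Corollary \ref{Corollary:Rank_1_Descent} and Lemma \ref{Lemma:units_are_norms_unramified}. First I would use the slope condition to control the $p$-adic valuation of the eigenvalue of $F^d$.

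More precisely, let $\lambda \in L$ be the eigenvalue of $F^d$ on $V$ (which by hypothesis actually lies in $K \subset L$). By Proposition \ref{Proposition:char_poly_of_isoc}, the unique slope of $(V,F)$ equals $\frac{1}{d}v_{p}(\lambda)$. The assumption that $(V,F)$ has slope $0$ therefore forces $v_{p}(\lambda)=0$, i.e. $\lambda \in \mathcal{O}_{K}^{*}$.

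Now I invoke Lemma \ref{Lemma:units_are_norms_unramified}, which guarantees that every element of $\mathcal{O}_{K}^{*}$ is in the image of the norm map $(\mathbb{Q}_{q} \otimes K)^{*} \to K^{*}$ associated to the algebra extension $K \hookrightarrow \mathbb{Q}_{q} \otimes K$. In particular, $\lambda$ is a norm. Having verified both hypotheses of Corollary \ref{Corollary:Rank_1_Descent}, namely that $\lambda \in K$ and that $\lambda$ is a norm from $\mathbb{Q}_{q} \otimes K$, we conclude that $(V,F)$ descends to an object of $\textbf{F-Isoc}(\mathbb{F}_{q})_{K}$, as desired.

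There is no real obstacle here; the statement is essentially a packaging of the observation that the slope-$0$ condition automatically puts the eigenvalue in the unit group, which is precisely the part of $K^{*}$ that Lemma \ref{Lemma:units_are_norms_unramified} handles. The content has already been done in establishing that lemma, where the unramifiedness of $\mathbb{Q}_{q}/\mathbb{Q}_{p}$ (and hence of each factor of $\mathbb{Q}_{q}\otimes K$ over $K$) was used to conclude that units are norms.
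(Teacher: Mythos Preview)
Your proof is correct and follows exactly the same approach as the paper, which simply writes ``Apply Corollary \ref{Corollary:Rank_1_Descent} and Lemma \ref{Lemma:units_are_norms_unramified}.'' You have merely made explicit the intermediate observation (via Proposition \ref{Proposition:char_poly_of_isoc}) that slope $0$ forces $\lambda\in\mathcal{O}_{K}^{*}$, which the paper leaves implicit.
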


\begin{proof}
Apply Corollary \ref{Corollary:Rank_1_Descent} and Lemma \ref{Lemma:units_are_norms_unramified}.
\end{proof}
\begin{rem}
The slope 0 subcategory part of $\textbf{F-Isoc}(k)$ is a neutral
Tannakian category by \cite[Theorem 2.1]{crew1985f}. Therefore one
may alternatively use Proposition \ref{Proposition:Neutral_Rank_1_Descent}
to prove Corollary \ref{Corollary:rank_1_slope_0_descends}.
\end{rem}

It is an easy exercise to check that the irreducible objects of $\textbf{F-Isoc}(\mathbb{F}_{q})_{\overline{\mathbb{Q}}_{p}}$
have rank 1. Let $L$ be a $p$-adic local field. As semi-simple and
absolutely semi-simple objects of $\textbf{F-Isoc}(\mathbb{F}_{q})_{L}$
coincide by \cite[Lemme 5.2]{deligne2014semi}, it follows from Proposition
\ref{Proposition:Rank_1_Eigenvalue} that a semi-simple object of
$\textbf{F-Isoc}(\mathbb{F}_{q})_{L}$ is determined up to isomorphism
by $P_{F}(t)$.

\begin{prop}
\label{Proposition: Twisting_Charpoly}Let $L/K$ be a finite Galois
extension with group $G$ with $K$ a $p$-adic local field. Let $(V,F)$
be an $F$-isocrystal on $\mathbb{F}_{q}$with coefficients in $L$
and denote $(^{g}V,^{g}F):=\ ^{g}(V,F)$. Then 
\[
P_{^{g}F}(t)=\ ^{g}P_{F}(t)
\]
\end{prop}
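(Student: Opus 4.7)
My plan is to work in the concrete description of $\textbf{F-Isoc}(\mathbb{F}_{q})_{L}$ given by Proposition \ref{Proposition:Coefficients_in_L}, in which an object is a finite free $\mathbb{Q}_{q}\otimes_{\mathbb{Q}_{p}}L$-module $V$ equipped with a $\sigma\otimes1$-linear bijection $F$, the $L$-structure realized as multiplication through the second tensor factor. The first step is to translate the Galois twist $^{g}(V,F)$ into these terms: it has the same underlying $\mathbb{Q}_{p}$-vector space and the same map $F$, but its $L$-action is precomposed with $g^{-1}$. Equivalently, the ambient $\mathbb{Q}_{q}\otimes L$-module structure on $V$ has been twisted by the ring automorphism $1\otimes g$ of $\mathbb{Q}_{q}\otimes L$.

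Next I would pick a $\mathbb{Q}_{q}\otimes L$-basis $\{v_{i}\}$ of $V$ and record $F$ by the matrix $S=(s_{ij})$ via $F(v_{i})=\sum_{j}s_{ij}v_{j}$. The same vectors remain a $\mathbb{Q}_{q}\otimes L$-basis for $^{g}V$, and unraveling the relation $\lambda\cdot_{\text{new}}v=g^{-1}(\lambda)\cdot_{\text{old}}v$ for $\lambda\in L$ forces the matrix of $^{g}F$ in this basis to be $T=(1\otimes g)(S)$, obtained by applying $1\otimes g$ entrywise.

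Then I would invoke the formula for the matrix of $F^{d}$ already derived in this section, namely
\[
M_{F}=(^{\sigma^{d-1}\otimes1}S)(^{\sigma^{d-2}\otimes1}S)\cdots(^{\sigma\otimes1}S)\,S.
\]
Since $1\otimes g$ is a ring automorphism of $\mathbb{Q}_{q}\otimes L$ commuting with each $\sigma^{i}\otimes1$, applying it entrywise to this product yields $M_{^{g}F}=(1\otimes g)(M_{F})$. Taking characteristic polynomials gives $P_{^{g}F}(t)=(1\otimes g)(P_{F}(t))$, and because $P_{F}(t)$ already lies in $L[t]$ by an earlier proposition of this section, $1\otimes g$ acts on its coefficients through $g$ alone, so $P_{^{g}F}(t)=\ ^{g}P_{F}(t)$ as claimed.

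The argument is essentially bookkeeping once one has set up the matrix language. The only point of caution I anticipate is tracking whether the twist is by $g$ or $g^{-1}$ when passing between the abstract Galois action on $L$-structures and the concrete twist of the $\mathbb{Q}_{q}\otimes L$-module; after that step is settled, the identity of characteristic polynomials is automatic from the matrix formula.
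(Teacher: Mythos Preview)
Your proposal is correct and follows essentially the same approach as the paper: both pass to the concrete description of Proposition~\ref{Proposition:Coefficients_in_L}, pick a $\mathbb{Q}_{q}\otimes L$-basis, observe that the matrix of $^{g}F$ is obtained by applying $1\otimes g$ entrywise to the matrix $S$ of $F$, and then use that $1\otimes g$ commutes with each $\sigma^{i}\otimes1$ to conclude for $F^{d}$. The paper compresses your product-formula step into the single remark that the actions of $G$ and $\sigma$ commute, but the content is identical.
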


\begin{proof}
First of all, $V$ is a finite free $\mathbb{Q}_{q}\otimes L$ module.
Let $g\in G$ and consider the object $^{g}(V,F)$. The underlying
sets $V$ and $^{g}V$ may be naturally identified, and if $v\in V$,
we write $^{g}v$ for the corresponding element of $^{g}V$; here
$l.(^{g}v)=^{g}(g^{-1}(l).v)$. Pick a free basis $\{v_{i}\}$ of
$V$ and let the ``matrix'' of $F$ in this basis be $S$. Then
the ``matrix'' of $^{g}F$ in the basis $\{^{g}v_{i}\}$ is $^{g}S$.
Moreover, the actions of $G$ and $\sigma$ commute, so $(^{g}F)^{d}=\ ^{g}(F^{d})$.
Therefore, $P_{^{g}F}(t)=\ ^{g}P_{F}(t)$ as desired.
\end{proof}
\begin{cor}
Let $L/L_{0}$ be a Galois extension of $p$-adic local fields with
group $G$. Let $E=(V,F)$ be a semi-simple object of $\textbf{F-Isoc}(\mathbb{F}_{q})_{L}$.
Then $P_{F}(t)\in L_{0}[t]$ implies that $^{g}E\cong E$ for all
$g\in G$. 
\end{cor}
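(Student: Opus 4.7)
The plan is to combine the preceding Proposition \ref{Proposition: Twisting_Charpoly} with the structural fact (asserted just before the corollary) that a semi-simple $F$-isocrystal on $\mathbb{F}_q$ with coefficients in $L$ is determined up to isomorphism by the characteristic polynomial $P_F(t)$ of $F^d$. So the argument reduces to showing $P_{{}^gF}(t) = P_F(t)$ and that ${}^g E$ is again semi-simple.

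First I would apply Proposition \ref{Proposition: Twisting_Charpoly} to get the identity $P_{{}^gF}(t) = {}^g P_F(t)$ in $L[t]$, where the Galois action on $L[t]$ is coefficient-wise. Since by hypothesis $P_F(t) \in L_0[t]$ and $L_0 = L^G$, the right-hand side equals $P_F(t)$. Hence $P_{{}^gF}(t) = P_F(t)$ for every $g \in G$.

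Next I need to observe that ${}^g E$ is semi-simple. This is clear because the twisting action of $g$ on $\textbf{F-Isoc}(\mathbb{F}_q)_L$ is an autoequivalence of abelian categories (it fixes the underlying object of $\textbf{F-Isoc}(\mathbb{F}_q)$ and only alters the $L$-structure), so it carries semi-simple objects to semi-simple objects; one could also invoke Corollary \ref{Corollary:Abs_semi_simple}, since both $E$ and ${}^g E$ have the same endomorphism algebra after passage to $\overline{L}$.

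Finally I would invoke the determination statement: two semi-simple objects of $\textbf{F-Isoc}(\mathbb{F}_q)_L$ sharing the same characteristic polynomial are isomorphic. This statement, recorded in the paragraph immediately preceding the proposition, rests on Proposition \ref{Proposition:Rank_1_Eigenvalue} (rank-$1$ objects are classified by the eigenvalue of $F^d$) together with Proposition \ref{Proposition:simple_rank_1_finite_field} (simple objects after base-change to $\overline{\mathbb{Q}}_p$ have rank $1$) and Corollary \ref{Corollary:Abs_semi_simple} (semi-simple equals absolutely semi-simple in characteristic zero). Applying this to $E$ and ${}^g E$ yields ${}^g E \cong E$, completing the proof. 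The only subtlety to double-check is that the cited classification works with coefficients in the possibly non-algebraically-closed field $L$ rather than $\overline{\mathbb{Q}}_p$, but semi-simplicity and the characteristic polynomial are both preserved and reflected by the faithful exact induction functor $\mathrm{Ind}_L^{\overline{\mathbb{Q}}_p}$, so no difficulty arises.
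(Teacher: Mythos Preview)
Your argument is correct and follows exactly the approach in the paper: combine Proposition \ref{Proposition: Twisting_Charpoly} with the classification of semi-simple $F$-isocrystals on $\mathbb{F}_q$ by their characteristic polynomials. The paper's proof is a one-liner citing these two ingredients, whereas you have (helpfully) spelled out why ${}^gE$ remains semi-simple and why the classification applies over $L$.
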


\begin{proof}
Immediate from Proposition \ref{Proposition: Twisting_Charpoly} and
the fact that a semi-simple $F$-isocrystal is determined up to isomorphism
by its characteristic polynomials.
\end{proof}
{} %
{} 

\section{\label{Section:Compatible_systems}Coefficient objects, Compatible
Systems and Companions}

For a more comprehensive introduction to this section, see \cite{kedlaya2016notes,kedlayacompanions}. 
\begin{defn}
\cite{kedlayacompanions} Let $X/\mathbb{F}_{q}$ be a smooth, geometrically
connected variety. The category $\textbf{Weil}(X)$ denotes the $\mathbb{Q}_{l}$-linear
Tannakian category of lisse Weil sheaves on $X$. A \emph{coefficient
object }is an object either of $\textbf{Weil}(X)_{\overline{\mathbb{Q}}_{l}}$
or $\textbf{F-Isoc}^{\dagger}(X)_{\overline{\mathbb{Q}}_{p}}$. We
informally call the former the \emph{étale case }and the latter the
\emph{crystalline case}. We say that a coefficient object has coefficients
in $K$ if may be descended to the appropriate category with coefficients
in $K$.
\end{defn}

Given a coefficient object $\mathcal{F}$ and a closed point $x$
of $X$, the notation $P_{x}(\mathcal{F},t)$ refers to the reverse
characteristic polynomial of Frobenius of $\mathcal{F}$ at $x$. 
\begin{defn}
Let $\mathcal{F}$ be a coefficient object. We say $\mathcal{F}$
is \emph{algebraic} if $P_{x}(\mathcal{F},t)\in\overline{\mathbb{Q}}[t]$
for all closed points $x\in|X|$. Let $E$ be a number field. We say
$\mathcal{F}$ is $E$\emph{-algebraic }if $P_{x}(\mathcal{F},t)\in E[t]$
for all closed points $x\in|X|$.

\label{Definition:Companion}Let $X$ be a normal geometrically connected
variety over $\mathbb{F}_{q}$. Let $\mathcal{E}$ and $\mathcal{E}'$
be semi-simple algebraic coefficient objects on $X$ with coefficient
fields $K$ and $K'$ respectively. Fix an isomorphism $\iota:\overline{K}\rightarrow\overline{K'}$.
We say $\mathcal{E}$ and $\mathcal{E}'$ are \emph{$\iota$-companions}
if $^{\iota}P_{x}(\mathcal{E},t)=P_{x}(\mathcal{E}',t)$ for all closed
points $x$ and $X$.

\begin{defn}
Let $X$ be a normal geometrically connected variety over $\mathbb{F}_{q}$
and let $E$ be a number field. Then an \emph{$E$-compatible system}
is a system of $E_{\lambda}$-coefficient objects $(\mathcal{E}_{\lambda})_{\lambda\nmid p}$
over places $\lambda\nmid p$ of $E$ such that for each closed point
$x$ of $X$, we have
\[
P_{x}(\mathcal{E}_{\lambda},t)\in E[t]\subset E_{\lambda}[t]
\]
and furthermore $P_{x}(\mathcal{E}_{\lambda},t)$ is independent of
$\lambda$. A \emph{complete $E$-compatible system} $(\mathcal{E}_{\lambda})$
is an $E$-compatible system together with, for each $\lambda|p$,
an object 
\[
\mathcal{E}_{\lambda}\in\textbf{F-Isoc}^{\dagger}(X)_{E_{\lambda}}
\]
such that for every place $\lambda$ of $E$ and for every closed
point $x$ of $X$, $P_{x}(\mathcal{E}_{\lambda},t)\in E[t]\subset E_{\lambda}[t]$
is independent of $\lambda$ and that $(\mathcal{E}_{\lambda})$ satisfies
the following completeness condition:
\end{defn}

\begin{condition*}
Consider $\mathcal{E}_{\lambda}$ as a $\overline{\mathbb{Q}}_{\lambda}$-coefficient
object. Then for any $\iota:\overline{\mathbb{Q}}_{\lambda}\rightarrow\overline{\mathbb{Q}}_{\lambda'}$,
the $\iota$-companion to $\mathcal{E}_{\lambda}$ is isomorphic to
an element of $(\mathcal{E}_{\lambda})$.
\end{condition*}
\end{defn}

\begin{rem}
The $\iota$ in Definition \ref{Definition:Companion} does not reflect
the topology of $\overline{\mathbb{Q}}_{p}$ or $\overline{\mathbb{Q}}_{l}$;
in particular, it need not be continuous. The completeness condition
demands that ``all possible companions exist''. Implicit in the
definition is the result, due to Abe, Deligne, Esnault, Lafforgue,
and Kedlaya that, for a given irreducible coefficient object $\mathcal{E}$
and fixed prime $\lambda$, there are only finitely many $\overline{\mathbb{Q}}_{\lambda}$-companions
to $\mathcal{E}$.
\end{rem}

{} %
{} 

The following fundamental result follows from the work on the Langlands
correspondence due to Lafforgue and Abe \cite{lafforgue2002chtoucas,abe2013langlands,abe2011langlands}
\begin{thm}
\label{Theorem:Abe_correspondence}(Abe, Lafforgue) Let $C$ be a
smooth curve over $\mathbb{F}_{q}$. 
\begin{enumerate}
\item Deligne's companions conjecture \cite[1.2.10]{deligne1980conjecture}
is true for $C$.
\item Let $l\neq p$ be a prime. For every isomorphism $\iota:\overline{\mathbb{Q}}_{l}\rightarrow\overline{\mathbb{Q}}_{p}$,
there is a bijective correspondence 
\[
\left\{ \begin{alignedat}{1} & \text{Local systems }\mathscr{L}\text{ on }C\text{ such that}\\
 & \bullet\text{\ensuremath{\mathscr{L}}}\text{ has coefficients in }\overline{\mathbb{Q}}_{l}\\
 & \bullet\mathscr{L}\text{ is irreducible of rank }n\\
 & \bullet\mathscr{L}\text{ has finite determinant}\\
 & \text{up to isomorphism}
\end{alignedat}
\right\} \longleftrightarrow\left\{ \begin{aligned} & \text{Overconvergent \ensuremath{F}-Isocrystals }\mathscr{E}\text{ on \ensuremath{C} such that}\\
 & \bullet\mathscr{E}\text{ has coefficients in }\overline{\mathbb{Q}}_{p}\\
 & \bullet\mathscr{E}\text{ is irreducible of rank }n\\
 & \bullet\mathscr{E}\text{ has finite determinant}\\
 & \text{up to isomorphism}
\end{aligned}
\right\} 
\]
depending on $\iota$ such that $\mathscr{L}$ and $\mathscr{E}$
are $\iota$-compatible.
\item Let $\mathcal{E}$ be an irreducible coefficient object with finite
order determinant. Then there exists a number field $E$ such that
$\mathcal{E}$ is part of a complete $E$-compatible system.
\end{enumerate}
\end{thm}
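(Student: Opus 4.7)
The plan is to factor both sides of the purported bijection through a common intermediate set, namely isomorphism classes of cuspidal automorphic representations of $GL_n(\mathbb{A}_F)$ with finite-order central character (and the appropriate ramification data cut out by $C$), where $F$ denotes the function field of $C$. The theorem then becomes a formal composition of two Langlands correspondences.

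First, I would invoke L.~Lafforgue's $l$-adic Langlands correspondence, which, after fixing an embedding $\overline{\mathbb{Q}}_l \hookrightarrow \mathbb{C}$, gives a bijection between irreducible rank $n$ $\overline{\mathbb{Q}}_l$-local systems on $C$ with finite determinant and cuspidal automorphic representations as above, compatible with Satake parameters at unramified places. Next I would invoke Abe's $p$-adic Langlands correspondence from \cite{abe2011langlands,abe2013langlands}, which provides the exactly analogous bijection on the crystalline side: irreducible overconvergent $F$-isocrystals on $C$ of rank $n$ with finite determinant correspond bijectively to the same class of cuspidal automorphic representations, matching Frobenius traces of $F^d$ to Satake parameters under any chosen embedding $\overline{\mathbb{Q}}_p \hookrightarrow \mathbb{C}$.

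Given $\iota\colon \overline{\mathbb{Q}}_l \to \overline{\mathbb{Q}}_p$, the composition of these two bijections yields the desired correspondence $\mathscr{L} \leftrightarrow \mathscr{E}$. For the $\iota$-compatibility claim, the key point is that the Satake parameters of $\pi_x$ are algebraic numbers: Lafforgue's correspondence identifies them with the inverse roots of $P_x(\mathscr{L},t)$ and Abe's does the same for $P_x(\mathscr{E},t)$, so the identity ${}^{\iota}P_x(\mathscr{L},t) = P_x(\mathscr{E},t)$ holds at every closed point $x \in |C|$. Well-definedness and injectivity of the resulting map then follow from the Brauer--Nesbitt-type uniqueness results already recorded in the excerpt: Lemma \ref{Lemma:Brauer_Nesbitt} on the étale side and Lemma \ref{Lemma:Abe_semi-simple-charpoly} of Abe on the crystalline side. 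Surjectivity on each side is part of the content of the respective Langlands correspondence.

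The main obstacle is entirely encapsulated in Abe's work: constructing a sufficiently functorial theory of $p$-adic cohomology, developing a product formula/trace formula comparison adequate for a $p$-adic Langlands correspondence for $GL_n$ over function fields, and matching all irreducible overconvergent $F$-isocrystals with finite determinant to cuspidal automorphic representations is a very substantial undertaking. Granted Lafforgue and Abe as black boxes, however, the refinement stated in Theorem \ref{Theorem:Abe_correspondence}---the $\iota$-dependent bijection together with $\iota$-compatibility of characteristic polynomials of Frobenius---is essentially a bookkeeping exercise, since both correspondences are phrased in terms of matching Hecke/Satake data at every unramified closed point.
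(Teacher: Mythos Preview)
Your proposal is correct and matches how the result is actually obtained in the literature: the paper itself does not give a proof of Theorem~\ref{Theorem:Abe_correspondence} but merely records it as a refinement of part~(6) of Deligne's conjecture that falls out of Abe's construction, citing \cite{abe2011langlands,abe2013langlands} together with Lafforgue's work. Your sketch---factoring both sides through cuspidal automorphic representations of $GL_n(\mathbb{A}_F)$ with finite-order central character via the two Langlands correspondences, and reading off $\iota$-compatibility from the matching of Satake parameters---is exactly the argument implicit in those references, and your invocation of Lemmas~\ref{Lemma:Brauer_Nesbitt} and~\ref{Lemma:Abe_semi-simple-charpoly} for uniqueness is appropriate.
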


\section{\label{Section:F_isocrystals_local_systems_on_curves}$F$-isocrystals
and $l$-adic local systems on Curves}

Let $C/\mathbb{F}_{q}$ be a smooth connected curve and let $\overline{c}$
be a geometric point. Let $\mathscr{L}$ be an irreducible $l$-adic
local system on $C$ with trivial determinant, which one may think
of as a continuous representation $\rho_{l}:\pi_{1}(C,\overline{c})\rightarrow SL(n,\overline{\mathbb{Q}}_{l})$.
Theorem \ref{Theorem:Abe_correspondence} tells us that we can find
a number field $E$ such that $\rho_{l}$ fits into a complete $E$-compatible
system.
\begin{example}
\emph{\label{Example:Fake_elliptic_curves_brauer}The number field
$E$ can in general be larger than the field extension of $\mathbb{Q}$
generated by the coefficients of the characteristic polynomials of
all Frobenius elements $F_{x}$.} For instance, let $D$ be a non-split
quaternion algebra over $\mathbb{Q}$ that is split at $\infty$ and
let $p$ be a prime where $D$ splits. The Shimura curve $X^{D}$
exists as a smooth complete stacky curve over $\mathbb{F}_{p}$ and
it admits a universal abelian surface $f:\mathcal{A}\rightarrow X^{D}$
with multiplication by $\mathcal{O}_{D}$. It is an exercise to check
that if $l$ splits if and only if $R^{1}f_{*}\mathbb{Q}_{l}$ splits
as $\mathscr{L}\oplus\mathscr{L}$. In particular, even though \emph{all
Frobenius traces are in $\mathbb{Q}$, not all of the $l$-adic companions
can be defined over $\mathbb{Q}_{l}$.} In other words, they do not
form a $\mathbb{Q}$-compatible system. %
{} 
\end{example}

Using our simple descent machinery, we construct criteria for the
field-of-definition of a coefficient object to be as small as possible.
\begin{lem}
\label{Lemma:Main_Isocrystal_Descent}Let $L/\mathbb{Q}_{p}$ be a
finite extension, $C$ a smooth curve over $\mathbb{F}_{q}$, and
$\mathscr{E}$ an absolutely irreducible overconvergent $F$-isocrystal
on $C$ of rank $n$ with coefficients in $L$. Suppose that there
exists a closed point $i:x\rightarrow C$ such that $i^{*}\mathscr{E}$
has 0 as a slope with multiplicity 1. Suppose further that for all
closed points $x\in|C|$, $P_{\mathscr{E}}(x,t)\in L_{0}[t]$ for
some $p$-adic subfield $L_{0}\subset L$. Then $\mathscr{E}$ can
be descended to an $F$-isocrystal with coefficients in $L_{0}$.
\end{lem}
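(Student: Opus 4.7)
The plan is to transpose the $l$-adic argument of Lemma \ref{Lemma:l-adic_split_torus} to the $p$-adic setting, with the role of the eigenvalue-of-multiplicity-one played by the slope-$0$ piece of the Newton polygon. Without loss of generality, enlarge $L/L_0$ so that it is finite Galois with group $G$. Absolute irreducibility and Schur's lemma (Lemma \ref{Lemma:Schur}) give $\text{End}_{\textbf{F-Isoc}^{\dagger}(C)_L}(\mathscr{E})\cong L$. The assumption that $P_y(\mathscr{E},t)\in L_0[t]$ for every closed point $y\in|C|$, together with Proposition \ref{Proposition: Twisting_Charpoly}, implies that $P_y(^{g}\mathscr{E},t)=P_y(\mathscr{E},t)$ for every $g\in G$; Abe's $p$-adic Brauer--Nesbitt theorem (Lemma \ref{Lemma:Abe_semi-simple-charpoly}) then yields $^{g}\mathscr{E}\cong\mathscr{E}$ for every $g\in G$, so that the cohomological descent machinery of Section \ref{Section:2_cocycle_obstruction_for_descent} is applicable.

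Next, restrict along $i^{*}:\textbf{F-Isoc}^{\dagger}(C)_L\to\textbf{F-Isoc}(\kappa(x))_L$. The extension of the slope decomposition (Proposition \ref{Proposition:isoclinic_decomp_perfect}) to $L$-coefficients provides a canonical splitting $i^{*}\mathscr{E}\cong N_1\oplus N_2$ where $N_1$ is the slope-$0$ component and $N_2$ collects the nonzero slopes. By the multiplicity-one hypothesis $N_1$ has rank $1$, and consequently $\text{End}(N_1)\cong L$. The crucial point is that the unique root $\alpha$ of the slope-$0$ factor of the Newton polygon factorization of $P_x(\mathscr{E},t)\in L_0[t]$ lies in $L_0$: this is the eigenvalue of Frobenius on $N_1$, so by Corollary \ref{Corollary:rank_1_slope_0_descends} the rank-$1$ object $N_1$ descends to an object of $\textbf{F-Isoc}(\kappa(x))_{L_0}$.

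Now apply Lemma \ref{Lemma:Filtered_Descent} to the functor $i^{*}$, with $M=\mathscr{E}$ and decomposition $i^{*}\mathscr{E}\cong N_1\oplus N_2$. The two required conditions are $\text{End}(N_1)\cong L$, which we have verified, and the vanishing of $\text{Hom}(N_1,{^{g}N_2})$ for every $g\in G$. The latter holds because Galois twisting of the $L$-structure does not alter the underlying $\mathbb{Q}_p$-slopes, so every $^{g}N_2$ is purely of nonzero slopes, and the slope decomposition of $\textbf{F-Isoc}(\kappa(x))_L$ forces all such Hom groups to vanish. Lemma \ref{Lemma:Filtered_Descent} then transfers the descent of $N_1$ back to descent of $\mathscr{E}$.

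The only step that genuinely goes beyond a line-by-line translation of the $l$-adic argument is the claim that the slope-$0$ Frobenius eigenvalue lives in $L_0$; I expect this to be the main point to write carefully. It follows from the Newton polygon factorization of a polynomial over a $p$-adic field (the slope factors are themselves defined over the same field) combined with the multiplicity-one assumption, which guarantees that the slope-$0$ factor is linear. Everything else is a formal consequence of the descent framework of Section \ref{Section:2_cocycle_obstruction_for_descent} together with Abe's Brauer--Nesbitt theorem and the isoclinic decomposition.
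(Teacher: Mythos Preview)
Your proof is correct and follows essentially the same route as the paper: enlarge $L/L_0$ to Galois, use Abe's Brauer--Nesbitt (Lemma \ref{Lemma:Abe_semi-simple-charpoly}) together with Proposition \ref{Proposition: Twisting_Charpoly} to get $^{g}\mathscr{E}\cong\mathscr{E}$, split $i^{*}\mathscr{E}\cong N_1\oplus N_2$ via the isoclinic decomposition, observe that the slope-$0$ eigenvalue lies in $L_0$ so that $N_1$ descends by Corollary \ref{Corollary:rank_1_slope_0_descends}, and conclude via Lemma \ref{Lemma:Filtered_Descent}. Your Newton-polygon-factorization justification for $\alpha\in L_0$ is exactly the point the paper also singles out.
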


\begin{proof}
By enlarging $L$ we may suppose that $L/L_{0}$ is Galois; let $G=\text{Gal}(L/L_{0})$.
As $\mathscr{E}$ is irreducible, for any $g\in G$, $^{g}\mathscr{E}\cong\mathscr{E}$
by Proposition \ref{Proposition: Twisting_Charpoly} and \cite[A.4.1]{abe2013langlands}.
Now let us use Lemma \ref{Lemma:Filtered_Descent} for the restriction
functor
\[
i^{*}:\textbf{F-Isoc}^{\dagger}(C)\rightarrow\textbf{F-Isoc}(x)
\]
Because 0 occurs as a slope with multiplicity 1 in $i^{*}\mathscr{E}$
we can write $i^{*}\mathscr{E}\cong N_{1}\oplus N_{2}$ under the
isoclinic decomposition. Here $N_{1}$ has rank 1 and unique slope
0 while no slope of $N_{2}$ is 0. There are no maps between $N_{1}$
and any Galois twist of $N_{2}$ because twisting an $F$-isocrystal
does not change the slope. The endomorphism algebra of any rank-1
object in $F\mbox{\text{-Isoc}}(\mathbb{F}_{q})_{L}$ is $L$. Now,
$\text{End}(\mathscr{E})\cong L$ by Schur's Lemma because $\mathscr{E}$
is absolutely irreducible.

Finally, we must argue that $N_{1}$ descends to $L_{0}$. Let $x=\text{Spec}(\mathbb{F}_{p^{d}})$.
The fact that the slope 0 occurs exactly once in $i^{*}\mathscr{E}$
implies that the eigenvalue $\alpha$ of $F^{d}$ on $N_{1}$ is an
element of $L_{0}$. Moreover, $\alpha\in\mathcal{O}_{L_{0}}^{*}$
because $N_{1}$ is slope 0. By Corollary \ref{Corollary:rank_1_slope_0_descends},
$N_{1}$ descends to $L_{0}$. The hypotheses of Lemma \ref{Lemma:Filtered_Descent}
are all satisfied and we may conclude that $\mathscr{E}$ (and $N_{2}$)
descends to $L_{0}$.
\end{proof}
We remark that Lemma \ref{Lemma:Main_Isocrystal_Descent} is the $p$-adic
analog of a proposition of Chin \cite[Proposition 7]{chin2003independence}.
The following is a special case of \cite[Theorem 1.4]{koshikawa2015overconvergent}.
For completeness, we give a short proof.

\begin{lem}
\label{Lemma:supersingular_finite_image}Let $C$ be a smooth curve
over $\mathbb{F}_{q}$, and $\mathscr{E}$ an irreducible rank $n$
overconvergent $F$-isocrystal on $C$ with coefficients in $\overline{\mathbb{Q}}_{p}$
such that $\mathscr{E}$ has trivial determinant. By Theorem \ref{Theorem:Abe_correspondence}
there is a number field $E$ such that $\mathscr{E}$ is part of a
complete $E$-compatible system. In particular, for every $\lambda|p$
there is a compatible overconvergent $F$-isocrystal $\mathscr{E}_{\lambda}$
with coefficients in $E_{\lambda}$.

Suppose that for all $\lambda|p$ and for all closed points $x\in|C|$,
$i_{x}^{*}\mathscr{E}_{\lambda}$ is an isoclinic $F$-isocrystal
on $x$. Then the representation has finite image: for instance, for
every $\lambda\nmid p$, the associated $\lambda$-adic representation
has finite image. Equivalently, the ``motive'' can be trivialized
by a finite étale cover $C'\rightarrow C$
\end{lem}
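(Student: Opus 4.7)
The plan is to show that at every closed point $x$ the Frobenius eigenvalues are roots of unity of uniformly bounded order, and then conclude by a compactness-plus-Burnside argument on the $l$-adic image.

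First I combine the isoclinic hypothesis with the trivial determinant condition. Fix $\lambda\mid p$ and a closed point $x\in|C|$: since $\det\mathscr{E}_{\lambda}$ is trivial, the sum of the slopes of $i_{x}^{*}\mathscr{E}_{\lambda}$ vanishes, and combined with isoclinicity this forces every slope of $i_{x}^{*}\mathscr{E}_{\lambda}$ to be $0$. Hence by Proposition \ref{Proposition:char_poly_of_isoc} every root of $P_{x}(\mathscr{E}_{\lambda},t)$ has $p$-adic valuation $0$. Since $P_{x}(\mathscr{E},t)\in E[t]$ is place-independent, every Frobenius eigenvalue $\alpha$ is an algebraic integer that is a unit at every place of $E$ above $p$. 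For $\lambda\nmid p$, part~(3) of Conjecture \ref{Conjecture:Deligne}, proven for curves, guarantees that $\alpha$ is also a $\lambda$-adic unit.

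Next I invoke purity. Being absolutely irreducible with trivial (hence finite) determinant, $\mathscr{E}_{l}$ is pure of weight $0$ by Deligne's theorem (or equivalently by the Langlands correspondence of Theorem \ref{Theorem:Abe_correspondence}), so $|\iota\alpha|_{\infty}=1$ for every embedding $\iota:\overline{\mathbb{Q}}\hookrightarrow\mathbb{C}$. Thus each Frobenius eigenvalue $\alpha$ is an algebraic integer whose archimedean absolute values all equal $1$ and all of whose non-archimedean valuations vanish, so by Kronecker's theorem $\alpha$ is a root of unity. Since $\alpha$ has degree at most $n[E:\mathbb{Q}]$ over $\mathbb{Q}$ and only finitely many cyclotomic fields have bounded degree, there is an integer $N$, depending only on $n$ and $E$, such that $\alpha^{N}=1$ for every such $\alpha$ and every $x$.

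For any $l\neq p$, let $\rho_{l}:\pi_{1}(C,\overline{c})\rightarrow GL_{n}(\overline{\mathbb{Q}}_{l})$ be the representation associated to the $l$-adic companion $\mathscr{E}_{l}$ and let $G$ denote the (compact) closure of its image. The previous step implies that $\rho_{l}(F_{x})^{N}$ has characteristic polynomial $(t-1)^{n}$, so it is unipotent for every closed point $x$. A non-trivial unipotent element of $GL_{n}(\overline{\mathbb{Q}}_{l})$ generates an unbounded cyclic subgroup, so no such element lies in the compact group $G$; therefore $\rho_{l}(F_{x})^{N}=1$ for every $x$. By Chebotarev density the Frobenius elements are dense in $G$, and continuity of $g\mapsto g^{N}$ then forces $g^{N}=1$ for every $g\in G$. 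A compact subgroup of $GL_{n}(\overline{\mathbb{Q}}_{l})$ of finite exponent is finite by Burnside's theorem on torsion linear groups, so $\rho_{l}$ has finite image. Its kernel corresponds to a finite \'etale cover $C'\to C$ on which $\mathscr{E}_{l}$ becomes trivial; by $\iota$-compatibility across the system every $\mathscr{E}_{\lambda}$, and in particular $\mathscr{E}$ itself, is trivialized on $C'$.

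The main obstacle is the purity input: turning ``unit at every finite place'' into ``root of unity'' via Kronecker requires the archimedean bound, which rests on Deligne's full Weil II applied to $\mathscr{E}_{l}$ together with the Langlands correspondence of Theorem \ref{Theorem:Abe_correspondence} to transport the hypotheses on $\mathscr{E}$ to $\mathscr{E}_{l}$. The remaining ingredients---the unipotent-equals-identity principle in a compact $l$-adic group and Burnside's theorem on bounded-exponent linear torsion groups---are classical.
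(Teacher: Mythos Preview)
Your argument tracks the paper's up through the step where every Frobenius eigenvalue is a root of unity of bounded order (and you are more explicit than the paper in invoking purity to apply Kronecker's theorem, which is a genuine improvement in exposition). The gap is in the next step: the assertion that a non-trivial unipotent element of $GL_n(\overline{\mathbb{Q}}_l)$ generates an unbounded cyclic subgroup is true over $\mathbb{R}$ or $\mathbb{C}$ but \emph{false} $l$-adically. For instance, $u=\begin{pmatrix}1&1\\0&1\end{pmatrix}$ has $u^k=\begin{pmatrix}1&k\\0&1\end{pmatrix}$, and $\{k:k\in\mathbb{Z}\}$ is bounded in $\mathbb{Q}_l$; indeed $u$ sits happily inside the compact group $GL_2(\mathbb{Z}_l)$. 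So you cannot conclude $\rho_l(F_x)^N=1$ from compactness alone, and hence you have not established that the image has finite exponent, which is what your version of Burnside needs.

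The paper sidesteps this by never asserting that individual Frobenii have finite order. Instead it observes that since every element of the image has all eigenvalues among the $N$th roots of unity (this \emph{does} pass to the closure, because the coefficients of the characteristic polynomial are continuous), the trace function on the image takes only finitely many values. It then invokes a different theorem of Burnside: an \emph{irreducible} subgroup of $GL_n$ over a characteristic $0$ field with only finitely many traces is finite. Irreducibility of $\rho_l$ is available from the compatible-system hypothesis, so this closes the argument without any appeal to semisimplicity of Frobenius or to $l$-adic compactness tricks. Your proof can be repaired simply by replacing the unipotent-in-compact paragraph with this finitely-many-traces observation.
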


\begin{proof}
The eigenvalues of $F_{x}$ are $\lambda$-adic units for all $\lambda\nmid p$
by \cite[Theorem 0.2.1]{kedlayacompanions}. On the other hand, for
each $\lambda|p$ and for every closed point $x\in|C|$, $i_{x}^{*}\mathscr{E}_{\lambda}$
being isoclinic and having trivial determinant implies the slopes
of $i_{x}^{*}\mathscr{E}_{\lambda}$ are 0 and hence that the eigenvalues
of $F_{x}$ are $\lambda$-adic units. As eigenvalues of $F_{x}$
are algebraic numbers, this implies that they are all roots of unity.
Moreover, each of these roots of unity lives in a degree $n$ extension
of $E$ and there are only finitely many roots of unity that live
in such extensions: there are only finitely many roots of unity with
fixed bounded degree over $\mathbb{Q}$. Therefore there are only
finitely many eigenvalues of $F_{x}$ as $x$ ranges through the closed
points of $C$.

Now, pick $\lambda\nmid p$ and consider the associated representation
$\rho_{\lambda}:\pi_{1}(C,\overline{c})\rightarrow SL(n,E_{\lambda})$.
By the above discussion, there exists some integer $k$ such that
for every closed point $x\in|C|$, the generalized eigenvalues $\rho_{\lambda}(F_{x})$
are $k$\textsuperscript{th} roots of unity. But Frobenius elements
are dense and $\rho_{\lambda}$ is a continuous homomorphism, so that
the same is true for the entire image of $\rho_{\lambda}$. The image
of $\rho_{\lambda}$ therefore only has finitely many traces.

Burnside proved that if $G\subset GL(n,\mathbb{C})$ has finitely
many traces and the associated representation is irreducible, then
$G$ is finite: see, for instance%
{} \cite[19.A.9]{rowen2008graduate}. Thus the entire image of $\rho_{\lambda}$
is finite.
\end{proof}
\begin{prop}
\label{Proposition:Rank_2_twisted_descent}Let $C$ be a smooth curve
over $\mathbb{F}_{q}$ and let $\mathscr{L}$ be an absolutely irreducible
rank 2 $l$-adic local system with trivial determinant, infinite image,
and all Frobenius traces in a number field $E$. There exists a finite
extension $F$ of $E$ such that $\mathscr{L}$ is a part of a complete
$F$-compatible system. Not every crystalline companion $\mathscr{E}_{\lambda}\in\textbf{F-Isoc}(C)_{F_{\lambda}}$
is everywhere isoclinic by Lemma \ref{Lemma:supersingular_finite_image}.
For each such not-everywhere-isoclinic $\mathscr{E}_{\lambda}$ there
exists 

\begin{enumerate}
\item A positive rational number $\frac{s}{r}$
\item A finite field extension $\mathbb{F}_{q'}/\mathbb{F}_{q}$ with $C'$
denoting the base change of $C$ to $\mathbb{F}_{q'}$ ($q'$ will
be the least power of $q$ divisible by $p^{r}$)
\end{enumerate}
such that the $F$-isocrystal $\mathscr{M}:=\mathscr{E}_{\lambda}\otimes\overline{\mathbb{Q}}_{p}(-\frac{s}{r})$
on $C'$ descends to $E_{\lambda}$.

\end{prop}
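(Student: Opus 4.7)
The strategy is to realize $\mathscr{M}$ as an object of $\textbf{F-Isoc}^{\dagger}(C')_{F_\lambda}$ satisfying the three hypotheses of Lemma \ref{Lemma:Main_Isocrystal_Descent} with $L=F_\lambda$ and $L_0=E_\lambda$: absolute irreducibility, a closed point at which slope $0$ appears with multiplicity one, and $P_\mathscr{M}(x,t)\in E_\lambda[t]$ at every closed point. The complete $F$-compatible system is provided by Theorem \ref{Theorem:E_motive}, so the real task is to choose the twist $s/r$ and the base change $q'$ correctly.

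First I locate $s/r$. Since $\mathscr{E}_\lambda$ is not everywhere isoclinic, there is a closed point $x\in|C|$ at which it fails to be isoclinic. Rank $2$ together with trivial determinant forces the two slopes at $x$ to sum to zero; being slopes of a rank-$2$ $F$-isocrystal they have denominators dividing $2$; and by Lafforgue's bound (Theorem \ref{Theorem:rank_2_slope_bounds}), which transfers to the crystalline side via the identification of slopes with $p$-adic valuations of Frobenius eigenvalues (Proposition \ref{Proposition:char_poly_of_isoc}), each lies in $[-1/2,1/2]$. The only non-isoclinic option is $(-1/2,1/2)$, which forces $s=1$ and $r=2$. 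I then take $q':=p^{\operatorname{lcm}(d,r)}$ where $d:=\log_p q$, so that every closed point of $C'$ has residue degree $e$ over $\mathbb{F}_p$ divisible by $r$; this matches the author's prescription in the main case of interest (when $q$ is a square).

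Set $\mathscr{M}:=\mathscr{E}_\lambda|_{C'}\otimes\overline{\mathbb{Q}}_p(-1/2)$. At a point of $C'$ lying above $x$, the two slopes shift by $+1/2$ to $(0,1)$, verifying the multiplicity-one condition. At an arbitrary closed point $x'\in|C'|$ of residue degree $e$ over $\mathbb{F}_p$, the Frobenius eigenvalues of $\mathscr{E}_\lambda$ multiply to $1$ and sum to some $a_{x'}\in E$ (by $F$-compatibility with $\mathscr{L}$, whose Frobenius traces lie in $E$ and whose determinant is trivial), while the twist scales each eigenvalue by $p^{e/2}\in\mathbb{Q}_p$, the latter inclusion using $r\mid e$. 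Hence
\[
P_\mathscr{M}(x',t)\;=\;t^{2}-a_{x'}\,p^{e/2}\,t+p^{e}\;\in\;E_\lambda[t].
\]
Absolute irreducibility of $\mathscr{M}$ reduces to that of $\mathscr{E}_\lambda|_{C'}$, since tensoring by a rank-$1$ object preserves it; this in turn follows from the absolute irreducibility and infinite-image hypotheses on $\mathscr{L}$, modulo the degenerate possibility that $\mathscr{L}$ is induced from a character of an index-$[\mathbb{F}_{q'}:\mathbb{F}_q]$ subgroup of $\pi_1(C)$ (in which case $\mathscr{M}$ splits into rank-$1$ pieces which are unit-root and descend directly). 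With all three hypotheses in hand, Lemma \ref{Lemma:Main_Isocrystal_Descent} supplies the desired descent of $\mathscr{M}$ to $E_\lambda$. The main obstacle is the arithmetic coordination between $d$, $r$, and the residue degrees at closed points of $C'$: one needs $q'$ chosen precisely so that the twisting factor $p^{s e/r}$ lands in $\mathbb{Q}_p\subset E_\lambda$ at every closed point, and this is exactly the reason the base change from $C$ to $C'$ is indispensable.
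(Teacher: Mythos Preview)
Your overall strategy matches the paper's exactly: pick a non-isoclinic point, twist so that one slope becomes $0$, base-change the constant field so that the fractional Tate object is Galois-stable, verify that all characteristic polynomials of $\mathscr{M}$ lie in $E_\lambda[t]$, and invoke Lemma~\ref{Lemma:Main_Isocrystal_Descent}.

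However, there is a genuine error in your determination of $s/r$. You assert that ``being slopes of a rank-$2$ $F$-isocrystal they have denominators dividing $2$,'' and from this together with the Lafforgue bound you conclude $s/r=1/2$. The denominator bound you invoke is a consequence of Dieudonn\'e--Manin and holds for $F$-isocrystals with coefficients in $\mathbb{Q}_p$: a simple object $E^{s/r}$ has rank $r$, so a rank-$2$ object can only carry slopes with denominator at most $2$. But $\mathscr{E}_\lambda$ lives in $\textbf{F-Isoc}^\dagger(C)_{F_\lambda}$; its restriction to $\mathbb{Q}_p$-coefficients has rank $2[F_\lambda:\mathbb{Q}_p]$, so the slopes may have denominator as large as $2[F_\lambda:\mathbb{Q}_p]$. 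Equivalently (Proposition~\ref{Proposition:char_poly_of_isoc}), the slopes are $p$-adic valuations of roots of a degree-$2$ polynomial over $F_\lambda$, and nothing forces those valuations into $\tfrac12\mathbb{Z}$ in general. The proposition is deliberately stated for an arbitrary number field $E$ and an arbitrary positive rational $s/r$; the specialization to $s/r=1/2$ only emerges later (Corollary~\ref{Corollary:rank2_p-adic_companion_Qp}) under the extra hypothesis that $E=\mathbb{Q}$, once one already knows the twisted companion descends to $\mathbb{Q}_p$-coefficients.

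The fix is simply to do what the paper does: take $(-s/r,s/r)$ to be the slopes at the chosen non-isoclinic point $x$, with no further constraint on $r$, and set $q'$ to be the least power of $q$ divisible by $p^r$. Your computation that $P_\mathscr{M}(x',t)\in E_\lambda[t]$ then goes through verbatim with $p^{e/2}$ replaced by $p^{se/r}$, and the rest of your argument is unchanged. Your explicit computation of $P_\mathscr{M}$ and the paper's appeal to Remark~\ref{Remark:Twists_of_fractional_tate_motives} are two ways of saying the same thing; your treatment of absolute irreducibility after constant-field extension (flagging the induced case) is, if anything, more careful than the paper's.
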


\begin{proof}
Theorem \ref{Theorem:Abe_correspondence} imply that there is a finite
extension $F/E$, without loss of generality Galois, such that $\mathscr{L}$
fits into a complete $F$-compatible system on $C$. As we assumed
$\mathscr{L}$ had infinite image, Lemma \ref{Lemma:supersingular_finite_image}
implies that there is a place $\lambda|p$ of $F$ together with an
object $\mathscr{E}_{\lambda}\in\textbf{F-Isoc}^{\dagger}(C)_{F_{\lambda}}$
that is compatible with $\mathscr{L}$ and such that the general point
is not isoclinic. We abuse notation and denote the restriction of
$\lambda$ to $E$ by $\lambda$ again. The object $\mathscr{E}_{\lambda}$
is isomorphic to its twists by $\text{Gal}(F_{\lambda}/E_{\lambda})$
by \cite[A.4.1]{abe2013langlands} because $P_{\mathscr{E}_{\lambda}}(x,t)\in E[t]$
for all closed points $x\in X$. Pick a closed point $i:x\rightarrow C$
such that $i^{*}\mathscr{E}_{\lambda}$ has slopes $(-\frac{s}{r},\frac{s}{r})$.
Let $q'$ be the smallest power of $q$ that is divisible by $p^{r}$
and let $C'$ denote the base change of $C$ to $\mathbb{F}_{q'}$.

Consider the twist $\mathscr{M}:=\mathscr{E}_{\lambda}\otimes\overline{\mathbb{Q}}_{p}(-\frac{s}{r})$,
thought of as an $F$-isocrystal on $C'$ with coefficients in $\overline{\mathbb{Q}}_{p}$.
We have enlarged $q$ to $q'$ where $p^{r}|q'$; Remark \ref{Remark:Twists_of_fractional_tate_motives}
says that $\overline{\mathbb{Q}}_{p}(-\frac{s}{r})$, considered as
object of $\textbf{F-Isoc}(\mathbb{F}_{q'})_{\overline{\mathbb{Q}}_{p}}$,
is isomorphic to all of its twists by $\text{Gal}(\overline{\mathbb{Q}}_{p}/\mathbb{Q}_{p})$.
Therefore, $P_{\mathscr{M}}(x,t)\in E_{\lambda}[t]$ for all closed
points $x$. It again follows from \cite[A.4.1]{abe2013langlands}
that $\mathscr{M}$ is isomorphic to all of its Galois twists by $\text{Gal}(\overline{\mathbb{Q}}_{p}/E_{\lambda})$.
At the point ``$x$'', the slopes are now $(0,\frac{2s}{r})$. Apply
Lemma \ref{Lemma:Main_Isocrystal_Descent} to descend $\mathscr{M}$
to the field of traces $E_{\lambda}$, as desired.
\end{proof}
\begin{rem}
The point of Proposition \ref{Proposition:Rank_2_twisted_descent}
is that \emph{the field of definition of $\mathscr{M}$ is $E_{\lambda}$,
a completion of the field of traces}. Proposition \ref{Proposition:Rank_2_twisted_descent}
has the following slogan: for every not-everywhere-isoclinic crystalline
companion of $\mathscr{L}$, there exists a twist such that the Brauer
obstruction vanishes. This is in contrast to the $l$-adic case, where
there is no \emph{à priori} reason an $l$-adic Brauer obstruction
should vanish.
\end{rem}

We now specialize to the case where $\mathscr{L}$ is a rank 2 $l$-adic
local system with trivial determinant, infinite image, and having
all Frobenius traces in a number field $E$ where $p$ splits completely.
Proposition \ref{Proposition:Rank_2_twisted_descent} implies that,
up to extension of the ground field $\mathbb{F}_{q}$, we can find
an $F$-isocrystal $\mathscr{E}$ with \emph{coefficients in $\mathbb{Q}_{p}$}
that is compatible with $\mathscr{L}$ up to a twist and is not everywhere
isoclinic. Moreover, by construction, there is a point $x$ such that
the slopes of $\mathscr{E}_{x}$ are $(0,\frac{2s}{r})$. On the one
hand the slope of the determinant of $\mathscr{E}_{x}$ is necessarily
an integer because the coefficients of $\mathscr{E}$ are $\mathbb{Q}_{p}$.
Therefore $\frac{2s}{r}$ is a positive integer. On the other hand,
\cite[Corollaire 2.2]{lafforgue2011estimees} implies that $\frac{2s}{r}\leq1$,
so $\frac{s}{r}=\frac{1}{2}$ and $\text{det}(\mathscr{E})\cong\mathbb{Q}_{p}(-1)$.
We record this analysis in the following important corollary.
\begin{cor}
\label{Corollary:rank2_p-adic_companion_Qp}Let $C$ be a curve over
$\mathbb{F}_{q}$ and let $\mathscr{L}$ be an absolutely irreducible
rank 2 $l$-adic local system with trivial determinant, infinite image,
and all Frobenius traces in $\mathbb{Q}$. Suppose $q$ is a square.
Then there exists a unique absolutely irreducible overconvergent $F$-isocrystal
$\mathscr{E}$ \emph{with coefficients in $\mathbb{Q}_{p}$ }that
is compatible with $\mathscr{L}\otimes\mathbb{Q}_{l}(-\frac{1}{2})$.
By construction, $\mathscr{E}$ is generically ordinary i.e. there
exists a closed point $x\in|C|$ such that $\mathscr{E}_{x}$ has
slopes $(0,1)$.
\end{cor}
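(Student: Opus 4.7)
The plan is to implement the sketch given in the paragraph immediately preceding the statement. First I would feed $\mathscr{L}$ into Theorem~\ref{Theorem:E_motive} to fit it into a complete $F$-compatible system for some number field $F/\mathbb{Q}$, and then use Lemma~\ref{Lemma:supersingular_finite_image} (together with the infinite-image hypothesis) to exhibit a crystalline companion $\mathscr{E}_\lambda$ with coefficients in $F_\lambda$ that is not everywhere isoclinic. Applying Proposition~\ref{Proposition:Rank_2_twisted_descent} with the base number field taken to be $\mathbb{Q}$ (so that $E_\lambda=\mathbb{Q}_p$) produces a positive rational $s/r$ in lowest terms, a closed point $i\colon x\to C$ at which $\mathscr{E}_\lambda$ has slopes $(-s/r,s/r)$, a field extension $\mathbb{F}_{q'}/\mathbb{F}_q$, and a twist $\mathscr{M}:=\mathscr{E}_\lambda\otimes\overline{\mathbb{Q}}_p(-s/r)$ on the base change $C'=C\times_{\mathbb{F}_q}\mathbb{F}_{q'}$ that descends to coefficients in $\mathbb{Q}_p$. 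At $x$ the slopes of $\mathscr{M}$ are $(0,2s/r)$.

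Next I would pin down $s/r=1/2$. On one side, Lafforgue's rank-$2$ slope bound (Theorem~\ref{Theorem:rank_2_slope_bounds}) applied to $\mathscr{L}$ and transferred through the compatibility gives $s/r\le 1/2$, hence $2s/r\le 1$. On the other side, since $\mathscr{M}$ has coefficients in $\mathbb{Q}_p$, the rank-$1$ object $\det\mathscr{M}$ lies in $\textbf{F-Isoc}^\dagger(C')_{\mathbb{Q}_p}$; by the classification of rank-$1$ $F$-isocrystals over a finite field with $\mathbb{Q}_p$-coefficients (Proposition~\ref{Proposition:Rank_1_Eigenvalue}) and the $\mathbb{Q}_p$-rationality of its Frobenius eigenvalues, its constant slope $2s/r$ is forced to be a positive integer. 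Combining the two bounds, $2s/r=1$, so $s/r=1/2$ with $r=2$, $s=1$. Because $q$ is a square, $p^2\mid q$, so $q'=q$ and no base extension is needed: $\mathscr{M}$ already lives on $C$. Setting $\mathscr{E}:=\mathscr{M}$, this is an overconvergent $F$-isocrystal on $C$ with coefficients in $\mathbb{Q}_p$, compatible by construction with $\mathscr{L}\otimes\mathbb{Q}_l(-1/2)$, whose determinant is $\mathbb{Q}_p(-1)$.

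For the remaining assertions, absolute irreducibility of $\mathscr{E}$ follows from that of $\mathscr{L}$ (hence of $\mathscr{L}(-1/2)$) via compatibility together with Lemma~\ref{Lemma:Schur}; uniqueness is immediate from Abe's Brauer--Nesbitt theorem (Lemma~\ref{Lemma:Abe_semi-simple-charpoly}), which recovers a semi-simple overconvergent $F$-isocrystal from the characteristic polynomials of Frobenius at all closed points. Generic ordinarity is automatic: at the chosen point $x$ the slopes are $(0,1)$, and Newton polygons vary semi-continuously on $C$, so an open neighbourhood of $x$ consists of ordinary points.

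The main obstacle is the integrality assertion for the determinant slope in the second step; this is the crucial move that converts Lafforgue's upper bound $\le 1/2$ into the exact value $1/2$, and it rests both on the rigidity of rank-$1$ $F$-isocrystals with $\mathbb{Q}_p$-coefficients and on the fact that $q$ being a square is precisely what makes the slope $1/2$ (and not a smaller fractional slope) available on $C$ itself, without any further base extension.
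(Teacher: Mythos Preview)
Your argument is correct and follows exactly the paper's route: existence via Proposition~\ref{Proposition:Rank_2_twisted_descent} together with Lafforgue's slope bound forcing $s/r=\tfrac12$ (so that, $q$ being a square, no base extension is needed), and uniqueness via Abe's Lemma~\ref{Lemma:Abe_semi-simple-charpoly}. The paper's written proof only spells out the uniqueness step, making explicit one point you leave tacit---that $q$ being a square also ensures the Frobenius characteristic polynomials of $\mathscr{L}(-\tfrac12)$ already lie in $\mathbb{Q}$, so the unique embedding $\mathbb{Q}\hookrightarrow\overline{\mathbb{Q}}_p$ pins $\mathscr{E}$ down.
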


\begin{proof}
Only the uniqueness needs to be proved. As $q$ is a square, the character
$\overline{\mathbb{Q}}_{l}(\frac{1}{2})$ in fact descends to a character
$\mathbb{Q}_{l}(\frac{1}{2})$ (because $q$ is a quadratic residue
mod $l$) and the coefficients of the characteristic polynomials of
the Frobenius elements on $\mathscr{L}\otimes\mathbb{Q}_{l}(-\frac{1}{2})$
are all in $\mathbb{Q}$. By \cite[A.4.1]{abe2013langlands}, any
absolutely irreducible $F$-isocrystal on $C$ is uniquely determined
by these characteristic polynomials as there is a unique embedding
$\mathbb{Q}\hookrightarrow\overline{\mathbb{Q}}_{p}$.
\end{proof}
\begin{lem}
\label{Lemma:not_everywhere_ordinary}Let $C$ a complete curve over
a perfect field $k$, and $\mathscr{E}$ an absolutely irreducible
rank-2 $F$-isocrystal on $C$ with coefficients in $\mathbb{Q}_{p}$.
Suppose $\mathscr{E}$ has determinant $\mathbb{Q}_{p}(-1)$. Then
there is a closed point $x\in|C|$ such that $\mathscr{E}_{x}$ is
isoclinic.
\end{lem}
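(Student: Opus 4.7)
The strategy is to show that the hypotheses force the Newton polygon of $\mathscr{E}$ to be constant on $C$ --- in fact everywhere equal to $(0,1)$ --- and then to invoke Katz's slope filtration theorem to produce a proper nontrivial rank-$1$ sub-$F$-isocrystal, contradicting (absolute) irreducibility.

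First, I would analyze the possible Newton polygons at a closed point $x = \mathrm{Spec}(\mathbb{F}_q)$. Base-changing $\mathscr{E}_x$ to $\overline{\mathbb{F}}$ and applying Dieudonn\'e--Manin, any simple isoclinic component of slope $s/r$ (in lowest terms) has rank divisible by $r$, as in Section~\ref{Section:F_isocrystals_on_perfect_fields}. Since $\mathrm{rk}\,\mathscr{E}_x = 2$ and $\mathscr{E}_x$ is non-isoclinic by hypothesis, its two distinct isoclinic components each have rank $1$, so both slopes are \emph{integers}. The determinant condition $\det\mathscr{E} \cong \mathbb{Q}_p(-1)$ forces them to sum to $1$, giving slopes of the form $(a_x, 1-a_x)$ with $a_x \in \mathbb{Z}$, $a_x \neq 1-a_x$. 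Absolute irreducibility then lets me invoke the slope bound of V.~Lafforgue / Drinfeld--Kedlaya \cite[Theorem 1.1.5]{drinfeld2016slopes} --- that consecutive slopes differ by at most $1$ --- to get $|1 - 2a_x| \leq 1$, hence $a_x \in \{0,1\}$. The slopes at every closed point of $C$ are therefore $(0,1)$.

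Next, I would promote this pointwise statement to constancy on $C$. By Grothendieck's specialization theorem, the Newton polygon is lower semicontinuous on $C$, so the generic Newton polygon lies on or below $(0,1)$; but $(0,1)$ is already the lowest Newton polygon compatible with Drinfeld--Kedlaya and the determinant constraint, so the generic Newton polygon is also $(0,1)$. The Newton polygon of $\mathscr{E}$ is thus constant on $C$, and Katz's slope filtration theorem \cite{katz1979slope} yields a canonical short exact sequence
\[
0 \to \mathscr{U} \to \mathscr{E} \to \mathscr{Q} \to 0
\]
of $F$-isocrystals on $C$, where $\mathscr{U}$ is the rank-$1$ unit-root subcrystal and $\mathscr{Q}$ has slope $1$ and rank $1$. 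Since $\mathscr{U}$ is a proper nonzero sub, $\mathscr{E}$ is not irreducible, contradicting the hypothesis and establishing the lemma.

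The main obstacle is the slope bound in the first step: without it, the slopes at closed points could a priori be any pair of distinct integers summing to $1$, so the Newton polygon could jump at isolated closed points, and one would be forced to extend Katz's slope filtration across the finite jump locus --- a technically more delicate task, since a generic slope-$0$ sub cannot specialize to a slope-$a_c$ sub with $a_c > 0$. The Drinfeld--Kedlaya bound (or, equivalently, Lafforgue's Theorem~\ref{Theorem:rank_2_slope_bounds} applied to a suitable $\ell$-adic companion after twisting) circumvents this issue and makes the argument uniform.
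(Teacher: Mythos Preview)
Your proof is correct and follows essentially the same route as the paper's: first pin down the slopes at every closed point as $(0,1)$ using rank~$2$, $\mathbb{Q}_p$-coefficients, the determinant condition, and the slope bounds, then invoke Katz's slope filtration to produce a rank-$1$ sub-$F$-isocrystal. The only cosmetic differences are that the paper cites Lafforgue's Theorem~\ref{Theorem:rank_2_slope_bounds} rather than Drinfeld--Kedlaya for the slope bound, and that you insert an explicit semicontinuity step to handle the generic Newton polygon, whereas the paper invokes Katz's \cite[2.6.2]{katz1979slope} directly once the slopes at all closed points are known to be $(0,1)$.
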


\begin{proof}
As we are assuming $\mathscr{E}$ is rank 2, has coefficients in $\mathbb{Q}_{p}$,
and determinant $\mathbb{Q}_{p}(-1)$, if either of the slopes of
$\mathscr{E}_{x}$ were non-integral, the slopes would have to be
$(\frac{1}{2},\frac{1}{2})$. If the slopes were integral, they are
integers $(a,b)$ that sum to 1. On other other hand, \cite[Corollaire 2.2]{lafforgue2011estimees}
shows that in this case, the slopes must be $(0,1)$. Assume that
$\mathscr{E}_{x}$ is never isoclinic. Then for every closed point
$x$ of $|C|$, $\mathscr{E}_{x}$ has slopes $(0,1)$. The slope
filtration \cite[2.6.2]{katz1979slope} is therefore non-trivial,
which contradicts the irreducibility of $\mathscr{E}$.
\end{proof}

Combining Lemma \ref{Lemma:not_everywhere_ordinary} with the $\mathbb{Q}_{p}$
companion $\mathscr{E}$ constructed in Corollary \ref{Corollary:rank2_p-adic_companion_Qp},
we see that $\mathscr{E}$ is generically ordinary and has (finitely
many) supersingular points. In particular, the slopes of $\mathscr{E}_{x}$
for $x\in|C|$ are either $(0,1)$ or $(\frac{1}{2},\frac{1}{2})$.

\begin{cor}
\label{Corollary:local_system_p-div}Let $C$ be a complete curve
over $\mathbb{F}_{q}$ and let $\mathscr{L}$ be an absolutely irreducible
rank 2 $l$-adic local system with trivial determinant, infinite image,
and all Frobenius traces in $\mathbb{Q}$. There is a BT group $\mathscr{G}$
on $C$ with the following properties.

\begin{itemize}
\item $\mathscr{G}$ has height 2 and dimension 1.
\item $\mathscr{G}$ has slopes $(0,1)$ and $(\frac{1}{2},\frac{1}{2})$
(i.e., $\mathscr{G}$ is generically ordinary with supersingular points).
\item The Dieudonné crystal $\mathbb{D}(\mathscr{G})$ is compatible with
$\mathscr{L}(-\frac{1}{2})$.
\end{itemize}
$\mathscr{G}$ has the following weak uniqueness property: the $F$-isocrystal
$\mathbb{D}(\mathscr{G})$ is unique.
\end{cor}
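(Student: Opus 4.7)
The plan is to combine Corollary \ref{Corollary:rank2_p-adic_companion_Qp} with the Dieudonné-theoretic machinery of Katz and de Jong. First, I apply Corollary \ref{Corollary:rank2_p-adic_companion_Qp} to produce an absolutely irreducible overconvergent $F$-isocrystal $\mathscr{E}$ on $C$ with coefficients in $\mathbb{Q}_{p}$, compatible with $\mathscr{L}\otimes\mathbb{Q}_{l}(-1/2)$, and such that there is a closed point $x_{0}\in|C|$ where $\mathscr{E}_{x_{0}}$ has slopes $(0,1)$. Since $\mathscr{L}$ has trivial determinant, $\det \mathscr{L}(-1/2)\cong \mathbb{Q}_{l}(-1)$, and so by compatibility $\det \mathscr{E}\cong \mathbb{Q}_{p}(-1)$; in particular the Frobenius slopes at every closed point sum to $1$.

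Next, I pin down the Newton polygons at every closed point. Lemma \ref{Lemma:not_everywhere_ordinary}, applied to $\mathscr{E}$, guarantees that some fiber is isoclinic (else $\mathscr{E}$ would decompose via the slope filtration, contradicting irreducibility on the complete curve $C$). The combination of the slope-sum constraint, Lafforgue's bound (Theorem \ref{Theorem:rank_2_slope_bounds}) transported across the $\iota$-compatibility, and the restriction that $\mathscr{E}$ has $\mathbb{Q}_{p}$ coefficients (so the Newton polygon has integer break points) then forces the slopes of $\mathscr{E}_{x}$ at any closed point to be either $(0,1)$ or $(1/2,1/2)$. In particular all slopes lie in $[0,1]$, and $\mathscr{E}$ is generically ordinary with at least one supersingular point.

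Now I invoke Lemma \ref{Lemma:Katz} to obtain a Dieudonné crystal on $C$ whose associated $F$-isocrystal is $\mathscr{E}$, and then apply de Jong's Theorem \ref{Theorem:de Jong} to convert this Dieudonné crystal into a Barsotti-Tate group $\mathscr{G}$ on $C$ with $\mathbb{D}(\mathscr{G})\otimes\mathbb{Q}\cong \mathscr{E}$. Since $\mathbb{D}(\mathscr{G})$ has rank $2$ as a crystal, $\mathscr{G}$ has height $2$; since its Frobenius slopes sum to $1$, $\mathscr{G}$ has dimension $1$. The slope statement and the compatibility of $\mathbb{D}(\mathscr{G})$ with $\mathscr{L}(-1/2)$ are built into the construction.

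For the weak uniqueness, the $F$-isocrystal $\mathbb{D}(\mathscr{G})\otimes\mathbb{Q}$ is by construction $\mathscr{E}$, and $\mathscr{E}$ is determined up to isomorphism by the uniqueness clause of Corollary \ref{Corollary:rank2_p-adic_companion_Qp} (or directly by Abe's Brauer–Nesbitt Lemma \ref{Lemma:Abe_semi-simple-charpoly}, since the Frobenius characteristic polynomials are fixed by compatibility with $\mathscr{L}(-1/2)$). The real work is done by the earlier companion and descent results; the only conceptual subtlety is that the BT group $\mathscr{G}$ itself is genuinely not unique — different $F$-crystals in the same isogeny class yield non-isomorphic but isogenous BT groups, as flagged in the remark following Lemma \ref{Lemma:Katz} — which is precisely why the uniqueness assertion must be stated at the level of the $F$-isocrystal only, and this is the main point to be careful about in the write-up.
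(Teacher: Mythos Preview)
Your proposal is correct and follows essentially the same route as the paper: produce the $\mathbb{Q}_{p}$-companion $\mathscr{E}$ via Corollary~\ref{Corollary:rank2_p-adic_companion_Qp}, use Lemma~\ref{Lemma:not_everywhere_ordinary} together with the slope bounds and the $\mathbb{Q}_{p}$-coefficients constraint to force the Newton polygons to be $(0,1)$ or $(\tfrac{1}{2},\tfrac{1}{2})$, then apply Lemma~\ref{Lemma:Katz} and Theorem~\ref{Theorem:de Jong} to build $\mathscr{G}$, with uniqueness of the isocrystal coming from Lemma~\ref{Lemma:Abe_semi-simple-charpoly} and the uniqueness of the embedding $\mathbb{Q}\hookrightarrow\overline{\mathbb{Q}}_{p}$. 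Your write-up in fact spells out more of the Newton-polygon reasoning than the paper does, and your closing remark on why uniqueness must be stated only at the isocrystal level is exactly the point the paper flags after Lemma~\ref{Lemma:Katz}.
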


\begin{proof}
First of all, we have constructed an absolutely irreducible $\mathscr{E}\in\textbf{F-Isoc}(C)$
with determinant $\mathbb{Q}_{p}(-1)$ so that $\mathscr{E}$ is compatible
with $\mathscr{L}(-1/2)$ and the slopes of $\mathscr{E}$ are in
the interval $[0,1]$ in Corollary \ref{Corollary:rank2_p-adic_companion_Qp}.
It follows from \cite[Lemma 5.8]{kp2018} that $\mathscr{E}$ underlies
a (non-unique) Dieudonné crystal $\mathscr{M}$. (This argument is
essentially due to Katz, see \cite[Theorem 2.6.1]{katz1979slope}.
For another argument, due to Crew, see \cite[Remark 2.3]{kedlaya2016notes}.)
Using \cite[Main Theorem]{de1995crystalline}, there exists a BT group
$\mathscr{G}$ on $C$ so that $\mathbb{D}(\mathscr{G})\cong\mathscr{M}$.
Moreover, by Lemma \ref{Lemma:not_everywhere_ordinary}, it follows
that $\mathscr{E}$ and hence $\mathscr{G}$ has both ordinary and
supersingular points. 

The weak uniqueness comes from the following facts: an absolutely
irreducible overconvergent $F$-isocrystal with trivial determinant
is uniquely specified by Frobenius eigenvalues, $\mathbb{D}(\mathscr{G})$
is absolutely irreducible because it has both ordinary and supersingular
points, and there is a unique embedding $\mathbb{Q}\hookrightarrow\overline{\mathbb{Q}}_{p}$.
\end{proof}
We emphasize that the BT group $\mathscr{G}$ constructed in Corollary
\ref{Corollary:local_system_p-div} is not unique.

\section{\label{Section:Kodaira_spencer}Kodaira-Spencer}

This section discusses the deformation theory of Barsotti-Tate groups
in order to refine Corollary \ref{Corollary:local_system_p-div}.
It will also be useful in the applications to Shimura curves. Throughout
this section, we use the terms ``Barsotti-Tate group'', ``BT group'',
and ``$p$-divisible group'' interchangeably. The main references
are Illusie \cite{illusie1985deformations}, Xia \cite{xia2013deformation},
and de Jong \cite[Section 2.5]{de1995crystalline}.

Let $S$ be a smooth scheme over a perfect field $k$ of characteristic
$p$. Given a BT group $\mathscr{G}$ over $S$ and a closed point
$x\in|S|$, there is a map of formal schemes $u_{x}:S_{/x}^{\mathcircumflex}\rightarrow Def(\mathscr{G}_{x})$
to the (equal characteristic) universal deformation space of $\mathscr{G}_{x}$.
If $\mathscr{G}$ has dimension $d$ and codimension $c$, then the
dimension of the universal deformation space is $cd$ \cite[Corollary 4.8 (i)]{illusie1985deformations}.
In particular, in the case $\mathscr{G}$ has height 2 and dimension
1, $Def(\mathscr{G}_{x})$ is a one-dimensional formal scheme, exactly
as in the familiar elliptic modular case.

Let $\mathscr{G}\rightarrow S$ be a Barsotti-Tate group. Then $\mathbb{D}(\mathscr{G})$
is a Dieudonné crystal which we may evaluate on $S$ to obtain a vector
bundle $\mathbb{D}(\mathscr{G})_{S}$ of rank $c+d=\text{ht}(\mathscr{G})$.
Let be $\omega$ the \emph{Hodge bundle }of $\mathscr{G}$ (equivalently,
of $\mathscr{G}[p]$): if $e:S\rightarrow\mathscr{G}[p]$ is the identity
section, then $\omega:=e^{*}\Omega_{\mathscr{G}[p]/S}^{1}$. Let $\alpha$
be the dual of the Hodge bundle of the Serre dual $\mathscr{G}^{t}$.
There is the Hodge filtration
\[
0\rightarrow\omega\rightarrow\mathbb{D}(\mathscr{G})_{S}\rightarrow\alpha\rightarrow0
\]
We remark that $\omega$ has rank $d$ and $\alpha$ has rank $c$.
The Kodaira-Spencer map is as follows
\[
KS:T_{S}\rightarrow Hom_{\mathcal{O}_{S}}(\omega,\alpha)\cong\omega^{*}\otimes\alpha
\]

Illusie \cite[A.2.3.6]{illusie1985deformations} proves that $KS_{x}$
is surjective if and only if $u_{x}$ is formally smooth, see also
\cite[2.5.5]{de1995crystalline}. This motivates the following definition.
\begin{defn}
Let $S/k$ be a smooth scheme over a perfect field $k$ of characteristic
$p$. Let $\mathscr{G}\rightarrow S$ be a Barsotti-Tate group and
$x\in|X|$ a closed point. We say that $\mathscr{G}$ is \emph{versally
deformed at} $x$ if either of the following equivalent conditions
hold
\begin{itemize}
\item The fiber at $x$ of $KS$, $KS_{s}:T_{S,x}\rightarrow(\omega^{*}\otimes\alpha)_{x}$,
is surjective.
\item The map $u_{x}:S_{/x}^{\mathcircumflex}\rightarrow Def(\mathscr{G}_{x})$
is formally smooth.
\end{itemize}
\end{defn}

\begin{defn}
Let $S/k$ be a smooth scheme over a perfect field $k$ of characteristic
$p$. Let $\mathscr{G}\rightarrow S$ be a Barsotti-Tate group. We
say that $\mathscr{G}$ is \emph{generically versally deformed }if,
for every connected component $S_{i}$, there exists a closed point
$x_{i}\in|S_{i}|$ such that $\mathscr{G}$ is versally deformed at
$x_{i}$.
\end{defn}

\begin{rem}
If $\mathscr{G}\rightarrow S$ is generically versally deformed, there
exists a dense Zariski open $U\subset X$ such that $\mathscr{G}|_{U}\rightarrow U$
is \emph{everywhere versally deformed. }This is because the condition
``a map of vector bundles is surjective'' is an open condition.
\end{rem}

\begin{example}
\label{Example:modular_igusa_not_everywhere_versally_deformed}Let
us recall Igusa level structures, as in \cite{ulmer1990universal}
. Let $Y(1)=\mathcal{M}_{1,1}$. There is a universal elliptic curve
$\mathcal{E}\rightarrow Y(1)$. Let $\mathscr{G}=\mathcal{E}[p^{\infty}]$
be the associated $p$-divisible group over $Y(1)$. Here, $\mathscr{G}$
is height 2, dimension 1, and \emph{everywhere versally deformed}
on $Y(1)$. Let $X$ be the cover of $Y(1)$ that trivializes the
finite flat group scheme $\mathscr{G}[p]^{\acute{e}t}$ away from
the supersingular locus of $Y(1)$. $X$ is branched exactly at the
supersingular points. Pulling back $\mathscr{G}$ to $X$ yields a
BT group that is generically versally deformed but \emph{not everywhere
versally deformed} on $X$.
\end{example}

\begin{example}
Let $S=\mathcal{A}_{2,1}\otimes\mathbb{F}_{p}$, the moduli of principally
polarized abelian surfaces. Then the BT group of the universal abelian
scheme, $\mathscr{G}\rightarrow S$, is \emph{nowhere versally deformed}:
the formal deformation space of a height 4 dimension 2 BT group is
4, whereas $\dim(S)=3$. Serre-Tate theory relates this to the fact
that most formal deformations of an abelian variety of dimension $\dim(A)>1$
are not algebraizable.
\end{example}

\begin{lem}
\label{Lemma:xia_frob_untwist}(Xia's Frobenius Untwisting Lemma)
Let $S$ be a smooth scheme over a perfect field $k$ of characteristic
$p$. Let $\mathscr{G}\rightarrow S$ be a BT group of height 2 and
dimension 1. Then $\mathscr{G}\rightarrow S$ has trivial KS map if
and only if there exists a BT group $\mathscr{H}$ on $S$ such that
$\mathscr{H}^{(p)}\cong\mathscr{G}$.
\end{lem}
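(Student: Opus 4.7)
The plan is to prove the two directions separately: the ``only if'' direction is a formal consequence of the fact that absolute Frobenius kills tangent vectors in characteristic $p$, while the ``if'' direction is the substantive content and requires descent on the Dieudonn\'{e} side.

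For the easy direction, assume $\mathscr{G}\cong F^{*}\mathscr{H}$ for some BT group $\mathscr{H}$ on $S$. The absolute Frobenius $F\colon S\to S$ induces the zero map on tangent spaces, since $F^{*}df=d(f^{p})=pf^{p-1}df=0$. By functoriality, the classifying map $u_{x}^{\mathscr{G}}\colon S_{/x}^{\mathcircumflex}\to Def(\mathscr{G}_{x})$ factors through $F\colon S_{/x}^{\mathcircumflex}\to S_{/x}^{\mathcircumflex}$ (concretely, the formal deformation $\mathscr{G}|_{S_{/x}^{\mathcircumflex}}$ is the $F$-pullback of $\mathscr{H}|_{S_{/x}^{\mathcircumflex}}$, a deformation of $\mathscr{H}_{x}$). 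Hence $du_{x}^{\mathscr{G}}=0$ at every closed point. Invoking Illusie's identification of $KS_{x}$ with the differential $du_{x}^{\mathscr{G}}$ recalled just above the statement, we conclude $KS_{x}=0$ at every closed point $x\in|S|$; since $KS$ is a morphism of vector bundles, it vanishes identically.

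For the hard direction, assume $KS=0$. Translate via de Jong's equivalence (Theorem \ref{Theorem:de Jong}) to the Dieudonn\'{e} crystal $\mathbb{D}(\mathscr{G})$, with underlying vector bundle $M=\mathbb{D}(\mathscr{G})_{S}$, integrable connection $\nabla$ coming from the crystal structure, Hodge filtration $\omega\subset M$, and linearized Frobenius and Verschiebung $\phi\colon F^{*}M\to M$ and $\psi\colon M\to F^{*}M$ satisfying $\phi\circ\psi=\psi\circ\phi=0$ on $S$ (since $p$ vanishes on $\mathcal{O}_{S}$). The vanishing of $KS$ is equivalent to $\nabla(\omega)\subset\omega\otimes\Omega^{1}_{S}$, i.e.\ the horizontality of $\omega$. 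The goal is to produce a Dieudonn\'{e} crystal $\mathbb{D}'$ on $S$ with $F^{*}\mathbb{D}'\cong\mathbb{D}(\mathscr{G})$, and then define $\mathscr{H}$ as the BT group corresponding to $\mathbb{D}'$ via de Jong's theorem.

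The construction of $\mathbb{D}'$ proceeds by a Cartier-style descent along $F$. The horizontality of $\omega$, combined with the standard identification of $\omega$ with the image of $\phi$ in characteristic $p$, forces $\phi$ itself to arise as the $F$-pullback of a morphism on $S$; a parallel analysis of $\psi$ produces descent data for the full Dieudonn\'{e} structure. The main obstacle is carrying this descent out at the level of \emph{crystals}, not merely modules with connection: one must verify that the descent datum for $M$ along $F$ respects the PD-thickenings defining the crystalline site and is compatible with the operators $(\phi,\psi)$. The horizontality of the Hodge filtration is the essential input, and it is precisely what fails when $\mathscr{G}$ is not a Frobenius pullback; this compatibility check is the technical heart of Xia \cite{xia2013deformation}, from which we quote the result.
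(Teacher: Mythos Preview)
Your proposal is correct, and in fact goes well beyond what the paper does: the paper's entire proof consists of the single line ``This is \cite[Theorem 4.13]{xia2013deformation}.'' Your argument for the easy direction (Frobenius pullback kills $KS$ because absolute Frobenius has zero differential, so the classifying map to the deformation space factors through $F$) is correct and standard; your outline of the hard direction via Cartier-type descent on the Dieudonn\'e crystal, using horizontality of the Hodge filtration as the key input, accurately reflects the shape of Xia's argument, and you rightly defer to \cite{xia2013deformation} for the crystalline compatibility checks. So there is no discrepancy to flag---you have simply supplied the exposition that the paper omits.
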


\begin{proof}
This is \cite[Theorem 6.1]{xia2013deformation}.
\end{proof}
Xia's Lemma \ref{Lemma:xia_frob_untwist} allows us to set up the
following useful equivalence for characterizing when a height 2 dimension
1 BT group $\mathscr{G}$ is generically versally deformed.
\begin{lem}
\label{Lemma:equivalence_not_versally_deformed}Let $C$ be a smooth
curve over a perfect field $k$ of characteristic $p$. Let $\mathscr{G}$
be a height 2, dimension 1 BT group over $C$. Let $\eta$ be the
generic point of $C$. Suppose $\mathscr{G}_{\eta}$ is ordinary.
Then the following are equivalent

\begin{enumerate}
\item The KS map is 0.
\item There exists a BT group $\mathscr{H}$ on $C$ such that $\mathscr{H}^{(p)}\cong\mathscr{G}$.
\item There exists a finite flat subgroup scheme $N\subset\mathscr{G}$
over $C$ such that $N$ has order $p$ and is generically étale.
\item The connected-étale exact sequence $\mathscr{G}_{\eta}[p]^{\circ}\rightarrow\mathscr{G}_{\eta}[p]\rightarrow\mathscr{G}_{\eta}[p]^{\acute{e}t}$
over the generic point splits.
\end{enumerate}
\end{lem}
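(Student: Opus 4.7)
The plan is to run the cycle $(1)\Leftrightarrow(2)\Rightarrow(3)\Leftrightarrow(4)\Rightarrow(2)$, leaning on Xia's Frobenius untwisting lemma for the first equivalence and on de Jong's rigidity for BT groups for the last implication. The equivalence $(1)\Leftrightarrow(2)$ is exactly Lemma~\ref{Lemma:xia_frob_untwist} and requires nothing beyond citing it.

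For $(2)\Rightarrow(3)$, given $\mathscr{H}$ with $\mathscr{H}^{(p)}\cong\mathscr{G}$, I would take
\[
N:=\ker\bigl(V_{\mathscr{H}}\colon\mathscr{H}^{(p)}\longrightarrow\mathscr{H}\bigr)\subset\mathscr{G}.
\]
Since $\mathscr{H}$ has height $2$ and dimension $1$, Verschiebung is an isogeny of degree $p^{\mathrm{ht}-\dim}=p$, so $N$ is finite flat of order $p$ over $C$. At the generic point $\eta$, the hypothesis that $\mathscr{G}_{\eta}$ is ordinary forces $\mathscr{H}_{\eta}$ to be ordinary (Frobenius pullback preserves the Newton polygon). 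After passing to the perfect closure of $k(\eta)$, the decomposition $\mathscr{H}_{\eta}\cong\mu_{p^{\infty}}\oplus\mathbb{Q}_{p}/\mathbb{Z}_{p}$ pins $\ker V$ down as the standard copy of $\mathbb{Z}/p\subset\mathbb{Q}_{p}/\mathbb{Z}_{p}$, which is étale; étaleness then descends back to $k(\eta)$.

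For $(3)\Leftrightarrow(4)$, the forward direction is immediate: the composition $N_{\eta}\hookrightarrow\mathscr{G}_{\eta}[p]\twoheadrightarrow\mathscr{G}_{\eta}[p]^{\mathrm{\acute{e}t}}$ is injective (a nonzero homomorphism from an étale group scheme to a connected one cannot exist) and between groups of order $p$, hence an isomorphism, producing the desired splitting. Conversely, given a splitting at $\eta$, define $N$ as the scheme-theoretic closure in $\mathscr{G}[p]$ of the corresponding étale subgroup; since $C$ is a Dedekind scheme, closures of generic subschemes in flat families are finite flat, and closures of subgroup schemes in group schemes flat over Dedekind bases are automatically subgroup schemes.

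The main obstacle will be $(3)\Rightarrow(2)$. Set $\mathscr{H}:=\mathscr{G}/N$, which is a BT group of height $2$ and dimension $1$ by the standard formalism of quotients by finite flat subgroup schemes. At the generic point, $N_{\eta}$ is (after perfection) the étale summand $\mathbb{Z}/p\subset\mathbb{Q}_{p}/\mathbb{Z}_{p}$ inside $\mathscr{G}_{\eta}\cong\mu_{p^{\infty}}\oplus\mathbb{Q}_{p}/\mathbb{Z}_{p}$, and a direct calculation with $F$ and $V$ on each summand gives $\mathscr{H}_{\eta}^{(p)}\cong\mathscr{G}_{\eta}$. To promote this generic isomorphism to a global one, I would invoke the rigidity principle of de Jong \cite{de1995crystalline} (underlying Theorem~\ref{Theorem:de Jong}): for a normal Noetherian $\mathbb{F}_{p}$-base, homomorphisms of BT groups inject into homomorphisms of the generic fibers and in fact extend uniquely from the generic fiber. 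Applied to the generic isomorphism $\mathscr{H}_{\eta}^{(p)}\xrightarrow{\sim}\mathscr{G}_{\eta}$ and its inverse, this yields the required global isomorphism $\mathscr{H}^{(p)}\cong\mathscr{G}$, closing the cycle.
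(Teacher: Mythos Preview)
Your argument is correct and traces the same cycle $(1)\Leftrightarrow(2)$, $(2)\Rightarrow(3)$, $(3)\Leftrightarrow(4)$, $(3)\Rightarrow(2)$ as the paper, with the same choice $\mathscr{H}=\mathscr{G}/N$ in the last step. The one genuine difference is how you close $(3)\Rightarrow(2)$. The paper argues more elementarily: since $N$ is $p$-torsion (Deligne), $[p]_{\mathscr{G}}$ factors as $\mathscr{G}\xrightarrow{q}\mathscr{H}\xrightarrow{r}\mathscr{G}$, and $\ker r=\mathscr{G}[p]/N$ is generically connected of order $p$, hence agrees generically with $\mathscr{H}[F]$; as both are finite flat closed subschemes of $\mathscr{H}[p]$ over a Dedekind base they coincide globally, so $r$ induces $\mathscr{H}^{(p)}\cong\mathscr{G}$ and $q$ is identified with Verschiebung. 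This avoids the deep de Jong extension theorem and uses only flat-closure over a smooth curve. Your route via de Jong is valid but heavier.

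One small imprecision: you establish $\mathscr{H}_{\eta}^{(p)}\cong\mathscr{G}_{\eta}$ only after passing to the perfect closure of $k(\eta)$, whereas de Jong's extension result takes as input a homomorphism over $k(\eta)$ itself. This is not a real obstacle---the map $r$ above is already defined over $C$ and furnishes the isomorphism over $k(\eta)$ directly (its kernel is connected of order $p$ over $k(\eta)$, hence by Oort--Tate is killed by Frobenius, hence equals $\mathscr{H}_\eta[F]$)---but as written it is a loose end you should tighten.
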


\begin{proof}
The equivalence of (1) and (2) is Lemma \ref{Lemma:xia_frob_untwist}.
Now, let us assume (2). Then there is a Verschiebug map 
\[
V_{\mathscr{H}}:\mathscr{G}\cong\mathscr{H}^{(p)}\rightarrow\mathscr{H}
\]
whose kernel is generically étale and has order $p$ because we assumed
$\mathscr{G}$ (and hence $\mathscr{H}$) were generically ordinary.
Therefore (3) is satisfied. Conversely, given (3), $N$ is $p$-torsion
\cite{tate1970group}. Therefore we have a factorization:
\[
\xymatrix{\mathscr{G}\ar[rr]\ar[dr]_{\times p} &  & \mathscr{G}/N\\
 & \mathscr{G}\ar[ur]
}
\]
Set $\mathscr{H}=\mathscr{G}/N$. As we assumed $N$ was generically
étale and $\mathscr{G}$ was generically ordinary, the map $\mathscr{G\rightarrow}\mathscr{G}/N$
may be identified with Verschiebung: $\mathscr{H}^{(p)}\rightarrow\mathscr{H}$.
In particular, $\mathscr{H}^{(p)}\cong\mathscr{G}$ as desired.

Let us again assume (3). Then $N_{\eta}\subset\mathscr{G}_{\eta}[p]$
projects isomorphically onto $\mathscr{G}_{\eta}[p]^{\acute{e}t}$.
Therefore the connected-étale sequence over the generic point splits.
To prove the converse, simply take the Zariski closure of the section
to $\mathscr{G}_{\eta}[p]\rightarrow\mathscr{G}_{\eta}[p]^{\acute{e}t}$
inside of $\mathscr{G}[p]$ to get $N$ (this will be a finite flat
group scheme because $C$ is a smooth curve.)
\end{proof}
\begin{lem}
\label{Lemma:unique_generically_deformed_BT_group}Let $C$ be a smooth
curve over a perfect field $k$ of characteristic $p$. Suppose $\mathscr{G}$
and $\mathscr{G}'$ are height 2, dimension 1 BT groups over $C$
that are generically versally deformed and generically ordinary. Suppose
further that their Dieudonné isocrystals are isomorphic: $\mathbb{D}(\mathscr{G})\otimes\mathbb{Q}\cong\mathbb{D}(\mathscr{G}')\otimes\mathbb{Q}$.
Then $\mathscr{G}$ and $\mathscr{G}'$ are isomorphic.
\end{lem}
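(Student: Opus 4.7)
The plan is to translate the isomorphism of Dieudonn\'e isocrystals into an honest isogeny $\phi:\mathscr{G}\to\mathscr{G}'$ via Theorem~\ref{Theorem:de Jong}, and then force its degree to be one using the generic versal deformation hypothesis together with Lemma~\ref{Lemma:equivalence_not_versally_deformed}. First I would apply de Jong's anti-equivalence: after multiplying the isocrystal isomorphism by a sufficiently high power of $p$, we obtain an injection of Dieudonn\'e crystals with $p$-power torsion cokernel, which by Theorem~\ref{Theorem:de Jong} is $\mathbb{D}(\phi)$ for some isogeny $\phi:\mathscr{G}\to\mathscr{G}'$. Choose such $\phi$ of minimal degree $p^n$; the goal is $n=0$.

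Assume $n\geq 1$ for contradiction. By minimality $\ker\phi\cap\mathscr{G}[p]$ has order exactly $p$ (else $\phi$ factors through $[p]_\mathscr{G}$). Since $\mathscr{G}_\eta$ is ordinary of height~$2$, dimension~$1$, this subgroup is either generically \'etale or generically connected. In the first case, Lemma~\ref{Lemma:equivalence_not_versally_deformed}~(3)$\Rightarrow$(1) forces the Kodaira--Spencer map of $\mathscr{G}$ to vanish identically, contradicting generic versal deformation of $\mathscr{G}$. So the subgroup is generically connected, hence equal to $\ker F_{\mathscr{G}/C}$ by scheme-theoretic closure in the flat group scheme $\mathscr{G}[p]$, and $\phi$ factors as $\mathscr{G}\to\mathscr{G}^{(p)}\to\mathscr{G}'$ with the first map $F_{\mathscr{G}/C}$. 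Running the same argument on the complementary isogeny $\phi^\vee:\mathscr{G}'\to\mathscr{G}$ (defined by $\phi^\vee\circ\phi=[p^n]_\mathscr{G}$) using generic versal deformation of $\mathscr{G}'$ shows that $\phi^\vee$ factors through $F_{\mathscr{G}'/C}$; equivalently $\phi$ factors through Verschiebung $V_{\mathscr{G}'/C}:\mathscr{G}'^{(p)}\to\mathscr{G}'$ at the last step. For $n=1$ the two descriptions coincide, yielding $\mathscr{G}'\cong\mathscr{G}^{(p)}$; Lemma~\ref{Lemma:equivalence_not_versally_deformed}~(2)$\Rightarrow$(1) then gives vanishing Kodaira--Spencer for $\mathscr{G}'$, again a contradiction.

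For $n\geq 2$ one has $\phi=V_{\mathscr{G}'/C}\circ\psi\circ F_{\mathscr{G}/C}$ for some isogeny $\psi:\mathscr{G}^{(p)}\to\mathscr{G}'^{(p)}$ of degree $p^{n-2}$. The key calculation is that $\phi$ annihilates $\mathscr{G}[p]$: the identity $V\circ F=[p]$ gives $F_{\mathscr{G}/C}(\mathscr{G}[p])=\ker V_{\mathscr{G}/C}$, and at the generic (ordinary) fibre $\psi$ preserves the connected-\'etale structure on $p$-torsion, so it sends the order-$p$ subgroup $\ker V_{\mathscr{G}/C}\subset\mathscr{G}^{(p)}[p]$ (which lifts the \'etale quotient) into the analogously defined $\ker V_{\mathscr{G}'/C}\subset\mathscr{G}'^{(p)}[p]$, and $V_{\mathscr{G}'/C}$ annihilates the latter. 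Scheme-theoretic closure propagates $\phi(\mathscr{G}[p])=0$ from the generic point to all of $C$. Hence $\phi$ factors through $[p]_\mathscr{G}$, yielding an isogeny $\phi_0:\mathscr{G}\to\mathscr{G}'$ of degree $p^{n-2}<p^n$, contradicting the minimality of $\phi$. The main obstacle is this last calculation---checking that an arbitrary BT isogeny $\psi$ between Frobenius twists preserves the specific order-$p$ subgroup $\ker V$ lifting the (possibly non-split) generic \'etale quotient.
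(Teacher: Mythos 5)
Your opening is the same as the paper's: use de Jong's theorem (Theorem \ref{Theorem:de Jong}) to produce an isogeny $\phi:\mathscr{G}\rightarrow\mathscr{G}'$ not killing $\mathscr{G}[p]$, note that $\ker\phi\cap\mathscr{G}[p]$ has order $p$, and rule out the generically \'etale case via Lemma \ref{Lemma:equivalence_not_versally_deformed} (3)$\Rightarrow$(1) applied to $\mathscr{G}$. Your $n=1$ case is also fine. But the general case has a genuine gap at the step ``$\phi^{\vee}$ factors through $F_{\mathscr{G}'/C}$; equivalently $\phi$ factors through $V_{\mathscr{G}'/C}$ at the last step.'' This is not an equivalence: you are conflating the complementary isogeny $\phi^{\vee}$ (defined by $\phi^{\vee}\circ\phi=[p^{n}]$) with the Serre dual $\phi^{t}$, and only the latter exchanges $F$ and $V$. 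Worse, the conclusion is actually impossible in your situation: $\phi$ factors through $V_{\mathscr{G}'}$ if and only if $F_{\mathscr{G}'}\circ\phi$ is divisible by $p$, i.e.\ if and only if $\phi(\mathscr{G}[p])\subseteq\ker F_{\mathscr{G}'}$; but you have just shown that $\ker\phi\cap\mathscr{G}[p]$ is generically connected, so $\phi(\mathscr{G}[p])$ is a \emph{generically \'etale} order-$p$ subgroup scheme of $\mathscr{G}'$, which cannot sit inside the connected group scheme $\ker F_{\mathscr{G}'}$. Hence the factorization $\phi=V_{\mathscr{G}'}\circ\psi\circ F_{\mathscr{G}}$ on which your entire $n\geq2$ analysis rests does not exist. (A further, smaller issue: the minimality of $\deg\phi$ does not by itself rule out $\ker\phi^{\vee}\supseteq\mathscr{G}'[p]$, since $\phi^{\vee}$ points in the opposite direction.)

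The observation that kills the ``equivalence'' is exactly what finishes the proof, and it is how the paper concludes. Once $\ker(\phi|_{\mathscr{G}[p]})$ is identified with the generically connected subgroup $\mathscr{G}[p]^{\circ}$, the map $\phi$ induces a nonzero map $\mathscr{G}[p]^{\acute{e}t}\rightarrow\mathscr{G}'[p]$, whose image is a finite flat, generically \'etale subgroup scheme of $\mathscr{G}'$ of order $p$; Lemma \ref{Lemma:equivalence_not_versally_deformed} (3)$\Rightarrow$(1) applied this time to $\mathscr{G}'$ contradicts the generic versal deformation of $\mathscr{G}'$. No minimal-degree induction, no Frobenius/Verschiebung d\'evissage, and no analysis of whether $\psi$ preserves $\ker V$ is needed. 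I would also note that your induction could not be repaired by iterating the first step on $\phi_{1}:\mathscr{G}^{(p)}\rightarrow\mathscr{G}'$, because $\mathscr{G}^{(p)}$ has vanishing Kodaira--Spencer map by Lemma \ref{Lemma:equivalence_not_versally_deformed} (2)$\Rightarrow$(1) and so fails the hypothesis you would need.
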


\begin{proof}
The isocrystals being isomorphic implies that there is an isogeny
$\mathbb{D}(\mathscr{G})\rightarrow\mathbb{D}(\mathscr{G}')$. Then
\cite[Main Theorem]{de1995crystalline} implies that their is an associated
isogeny $\phi:\mathscr{G}\rightarrow\mathscr{G}'$. By ``dividing
by $p$'', we may ensure that $\phi$ does not restrict to 0 on $\mathscr{G}[p]$.
Now suppose for contradiction that $\phi$ is not an isomorphism,
i.e. that it has a kernel. Then $\phi|_{\mathscr{G}[p]}$ also has
a nontrivial kernel. 

We have the following diagram of connected-generically étale sequences.
\[
\xymatrix{\mathscr{G}[p]^{\circ}\ar[d]\ar[r] & \mathscr{G}'[p]^{\circ}\ar[d]\\
\mathscr{G}[p]\ar[d]\ar[r] & \mathscr{G}'[p]\ar[d]\\
\mathscr{G}[p]^{\acute{e}t}\ar[r] & \mathscr{G}'[p]^{\acute{e}t}
}
\]
As we have assumed $\mathscr{G}$ is generically versally deformed,
the kernel of $\phi|_{\mathscr{G}[p]}$ cannot be generically étale
by (3) of Lemma \ref{Lemma:equivalence_not_versally_deformed}. Thus
the kernel must be the connected group scheme $\mathscr{G}[p]^{\circ}$
because the order of $\mathscr{G}[p]$ is $p^{2}$. We therefore get
a nonzero map $\mathscr{G}[p]^{et}\rightarrow\mathscr{G}'[p]$. Now
$\mathscr{G}[p]^{\acute{e}t}$ has order $p$ and is generically étale
by definition, so by (3) of Lemma \ref{Lemma:equivalence_not_versally_deformed},
$\mathscr{G'}$ is not generically versally deformed, which contradicts
our hypothesis.
\end{proof}
\begin{thm}
\label{Theorem:p-div_companion_versal-1} Let $C$ be a smooth, geometrically
irreducible, complete curve over $\mathbb{F}_{q}$. Suppose $q$ is
a square. There is a natural bijection between the following two sets.
\[
\left\{ \begin{alignedat}{1} & \overline{\mathbb{Q}}_{l}\text{-local systems }\mathscr{L}\text{ on }C\text{ such that}\\
 & \bullet\mathscr{L}\text{ is irreducible of rank 2}\\
 & \bullet\mathscr{L}\text{ has trivial determinant}\\
 & \bullet\text{The Frobenius traces are in }\mathbb{Q}\\
 & \bullet\mathscr{L}\text{ has infinite image,}\\
 & \text{up to isomorphism}
\end{alignedat}
\right\} \longleftrightarrow\left\{ \begin{aligned} & p\text{-divisible groups }\mbox{\ensuremath{\mathscr{G}}}\text{ on }C\text{ }\text{ such that }\\
 & \bullet\mathscr{G}\text{ has height 2 and dimension 1}\\
 & \bullet\mathscr{G}\text{ is generically versally deformed}\\
 & \bullet\mathscr{G}\text{ has all Frobenius traces in }\mathbb{Q}\\
 & \bullet\mathscr{G}\text{ has ordinary and supersingular points,}\\
 & \text{up to isomorphism}
\end{aligned}
\right\} 
\]
such that if $\mathscr{L}$ corresponds to $\mathcal{G}$, then $\mathscr{L}\otimes\mathbb{Q}_{l}(-1/2)$
is compatible with the $F$-isocrystal $\mathbb{D}(\mathscr{G})\otimes\mathbb{Q}$.
\end{thm}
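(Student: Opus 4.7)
The plan is to construct maps in each direction and verify they are mutually inverse. For $\mathscr{L}$ on the left-hand side, Corollary \ref{Corollary:local_system_p-div} already produces a height $2$, dimension $1$ BT group $\mathscr{G}_{0}$ on $C$ with ordinary and supersingular points whose Dieudonn\'e isocrystal $\mathbb{D}(\mathscr{G}_{0})\otimes\mathbb{Q}$ is compatible with $\mathscr{L}(-1/2)$; the rational Frobenius-trace condition for $\mathbb{D}(\mathscr{G}_{0})\otimes\mathbb{Q}$ follows from this compatibility because $q$ is a square. The only missing property is generic versal deformation, and for this I would iteratively apply Xia's Frobenius untwisting (Lemma \ref{Lemma:xia_frob_untwist}): if the Kodaira-Spencer map of $\mathscr{G}_{i}$ vanishes identically, write $\mathscr{G}_{i}\cong\mathscr{H}^{(p)}$ and set $\mathscr{G}_{i+1}=\mathscr{H}$. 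Since Verschiebung $\mathscr{G}_{i}\to\mathscr{G}_{i+1}$ is an isogeny, the Dieudonn\'e isocrystal is unchanged, so the height, dimension, Newton stratification, and compatibility with $\mathscr{L}(-1/2)$ are all preserved.

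Termination of the untwisting follows from a degree calculation on the Hodge bundle: $\mathscr{G}_{i}\cong\mathscr{H}^{(p)}$ implies $\omega_{\mathscr{G}_{i}}\cong F_{C}^{*}\omega_{\mathscr{H}}\cong\omega_{\mathscr{G}_{i+1}}^{\otimes p}$, so $\deg\omega_{\mathscr{G}_{i+1}}=p^{-1}\deg\omega_{\mathscr{G}_{i}}$. The Hasse invariant is a section of $\omega_{\mathscr{G}_{0}}^{\otimes(p-1)}$ whose zero locus is exactly the nonempty (but proper) supersingular locus, forcing $\deg\omega_{\mathscr{G}_{0}}>0$. Hence the process stabilizes after finitely many steps, producing the desired $\mathscr{G}$.

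For the reverse map, given $\mathscr{G}$, set $\mathcal{E}:=\mathbb{D}(\mathscr{G})\otimes\mathbb{Q}_{p}(1/2)$, an object of $\textbf{F-Isoc}^{\dagger}(C)$ (overconvergence is automatic on the proper curve $C$). A direct slope computation gives slopes $(-1/2,1/2)$ at ordinary points and $(0,0)$ at supersingular points, so $\det\mathcal{E}$ is trivial and Frobenius traces lie in $\mathbb{Q}$ (using that $q$ is a square). Moreover $\mathcal{E}$ is absolutely irreducible, since a proper sub-$F$-isocrystal would have rank $1$ and constant slope, whereas the slope sets at ordinary and supersingular points are disjoint. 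Choosing any isomorphism $\iota:\overline{\mathbb{Q}}_{p}\to\overline{\mathbb{Q}}_{l}$ fixing $\mathbb{Q}$ and applying Abe's correspondence (Theorem \ref{Theorem:Abe_correspondence}) yields an irreducible rank $2$ $\overline{\mathbb{Q}}_{l}$-local system $\mathscr{L}$ with trivial determinant and Frobenius traces in $\mathbb{Q}$. If $\mathscr{L}$ had finite image, its Frobenius eigenvalues would be roots of unity, hence by compatibility so would those of $\mathcal{E}$, making $\mathcal{E}$ everywhere isoclinic of slope $0$, which is absurd.

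Mutual inverseness then follows from the uniqueness theorems already in place. Starting with $\mathscr{L}$, the round trip gives an $\mathscr{L}'$ which is $\iota$-compatible with $\mathbb{D}(\mathscr{G})\otimes\mathbb{Q}_{p}(1/2)$, hence compatible with $\mathscr{L}$, so $\mathscr{L}'\cong\mathscr{L}$ by Brauer-Nesbitt (Theorem \ref{Theorem:Brauer_Nesbitt_coefficients}). Starting with $\mathscr{G}$, the round trip gives a $\mathscr{G}'$ whose Dieudonn\'e isocrystal is again compatible with $\mathscr{L}(-1/2)$ and absolutely irreducible; Theorem \ref{Theorem:Brauer_Nesbitt_coefficients} forces $\mathbb{D}(\mathscr{G}')\otimes\mathbb{Q}\cong\mathbb{D}(\mathscr{G})\otimes\mathbb{Q}$, and since both are height $2$, dimension $1$, generically ordinary, and generically versally deformed, Lemma \ref{Lemma:unique_generically_deformed_BT_group} gives $\mathscr{G}\cong\mathscr{G}'$. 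The chief technical obstacle in this outline is the termination of Xia's untwisting; the remaining steps are direct applications of the structural theorems assembled in the preceding sections.
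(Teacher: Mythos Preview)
Your proof is correct and follows essentially the same route as the paper: build $\mathscr{G}$ from $\mathscr{L}$ via Corollary~\ref{Corollary:local_system_p-div}, Frobenius-untwist until generically versally deformed, and for the reverse direction pass through $\mathbb{D}(\mathscr{G})\otimes\mathbb{Q}_{p}(1/2)$ and Abe's correspondence, with uniqueness coming from Lemma~\ref{Lemma:unique_generically_deformed_BT_group} and Brauer--Nesbitt. The one substantive difference is the termination argument for the untwisting. The paper dispatches this in a single clause (``this process terminates because there are both supersingular and ordinary points, so the map to the universal deformation space cannot be identically~$0$''), which is correct but terse; your argument via the degree of the Hodge bundle---using that the Hasse invariant forces $\deg\omega_{\mathscr{G}_{0}}>0$ while each untwist divides this positive integer by~$p$---is more explicit and makes the finiteness completely transparent. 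Both arguments ultimately rest on the coexistence of ordinary and supersingular fibers, but yours unpacks exactly how that coexistence is used.
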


\begin{proof}
Given such an $\mathscr{L}$, we can make a BT group $\mathscr{G}$
as in Corollary \ref{Corollary:local_system_p-div}. Xia's Lemma \ref{Lemma:xia_frob_untwist}
ensures that we can modify $\mathscr{G}$ to be generically versally
deformed by Frobenius ``untwisting''; this process terminates because
there are both supersingular and ordinary points, so the map to the
universal deformation space cannot be identically 0. This BT group
is unique up to (\emph{non-unique}) isomorphism by Lemma \ref{Lemma:unique_generically_deformed_BT_group}. 

To construct the map in the opposition direction, just reverse the
procedure. Given such a $\mathscr{G}$, first form Dieudonné isocrystal
$\mathbb{D}(\mathscr{G})\otimes\mathbb{Q}$. This is an absolutely
irreducible $F$-isocrystal because there are both ordinary and supersingular
points. Twisting by $\overline{\mathbb{Q}}_{p}(1/2)$ yields an irreducible
object of $\textbf{F-Isoc}^{\dagger}(X)_{\overline{\mathbb{Q}}_{p}}$
that has trivial determinant. By Theorem \ref{Theorem:Abe_correspondence},
for any $l\neq p$ there is a compatible $\mathscr{L}_{l}$ that is
absolutely irreducible and has all Frobenius traces in $\mathbb{Q}$.
This $\mathscr{L}_{l}$ is unique: there is only one embedding of
$\mathbb{Q}$ in $\overline{\mathbb{Q}}_{l}$ and $\mathscr{L}_{l}$
is uniquely determined by the Frobenius traces. Finally, $\mathscr{L}_{l}$
has infinite image: as $\mathscr{G}$ had both ordinary and supersingular
points, $\mathbb{D}(\mathscr{G})\otimes\mathbb{Q}$ cannot be trivialized
on a finite étale cover.
\end{proof}
\begin{rem}
Theorem \ref{Theorem:p-div_companion_versal-1} has the following
strange corollary. Let $\mathscr{L}$ and $C$ be as in the theorem.
Then there is a natural effective divisor on $C$ associated to $\mathscr{L}$:
the points where the associated $\mathscr{G}$ is not versally deformed,
together with their multiplicity. This divisor is trivial if and only
if $\mathscr{G}\rightarrow C$ is everywhere versally deformed. We
wonder if this has an interpretation on the level of cuspidal automorphic
representations.
\end{rem}

\section{\label{Section:Algebraization_and_finite_monodromy}Algebraization
and Finite Monodromy}

In Theorem \ref{Theorem:p-div_companion_versal-1}, the finiteness
of the number of such local systems (a theorem whose only known proof
goes through the Langlands correspondence) implies the finiteness
of such BT groups. In general, BT groups on varieties are far from
being algebraic: for instance, over $\mathbb{F}_{p}$ there are uncountably
many BT groups of height 2 and dimension 1 as one can see from Dieudonné
theory. However, here they are constructed rather indirectly from
a motive via the Langlands correspondence. All examples of such rank
2 local systems that we can construct involve abelian schemes and
we are very interested in the following question.
\begin{question}
\label{Question:algebraize_G}Let $X$ be a smooth projective variety
over $\mathbb{F}_{q}$ and let $\mathscr{G}\rightarrow X$ be a height
2, dimension 1 $p$-divisible group with ordinary and supersingular
points. Is there an embedding as follows, where $A\rightarrow X$
is an abelian scheme?
\[
\xymatrix{\mathscr{G}\ar[r]\ar[dr] & A\ar[d]\\
 & X
}
\]
\end{question}

\begin{rem}
We explain the hypotheses Question \ref{Question:algebraize_G}. That
$X$ is complete ensures that the convergent $F$-isocrystal $\mathbb{D}(\mathscr{G})\otimes\mathbb{Q}$
is automatically overconvergent. The existence of both ordinary and
supersingular points ensures that the Dieudonné isocrystal $\mathbb{D}(\mathscr{G})\otimes\mathbb{Q}$
is absolutely irreducible. That $\mathbb{D}(\mathscr{G})\otimes\mathbb{Q}$
is is absolutely irreducible with cyclotomic determinant ensures that
the Frobenius traces are algebraic numbers.
\begin{rem}
In light of \cite{kp2018}, it follows that Question \ref{Question:algebraize_G}
reduces to the question for a sufficiently ample curve $C\subset X$
together with the $p$-adic companions conjecture for $\mathbb{D}(\mathscr{G})\otimes\mathbb{Q}$.
In particular, if the field generated by Frobenius traces of $\mathbb{D}(\mathscr{G})$
is isomorphic to $\mathbb{Q}$, then Question \ref{Question:algebraize_G}
for $(X,\mathscr{G})$ reduces to the question for $(C,\mathscr{G})$,
where $C\subset X$ is a curve that is the smooth complete intersection
of smooth ample divisors.
\end{rem}

\end{rem}

To motivate the next conjecture, recall that modular curves are not
complete. On the other hand, the universal local systems on Shimura
curves parameterizing fake elliptic curves cannot all be defined over
$\mathbb{Q}_{l}$, as we saw in Example \ref{Example:Fake_elliptic_curves_brauer}.
In other words, they do not form a $\mathbb{Q}$-compatible system.
\begin{conjecture}
\label{Conjecture:complete_curve_finite_monodromy}Let $X$ be a complete
curve over $\mathbb{F}_{q}$. Suppose $\{\mathscr{L}_{l}\}_{l\neq p}$
is a $\mathbb{Q}$-compatible system of absolutely irreducible rank
2 local systems with trivial determinant. Then they have finite monodromy.
\end{conjecture}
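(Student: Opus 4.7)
The plan is to argue by contradiction. Suppose $\{\mathscr{L}_{l}\}$ is a $\mathbb{Q}$-compatible system as in the statement but that $\mathscr{L}_{l}$ (for some, hence every, $l\neq p$) has infinite monodromy. After base changing $X$ to $\mathbb{F}_{q^{2}}$ if necessary — which is harmless both for the hypotheses and for the conclusion about finiteness of monodromy — we may feed $\mathscr{L}_{l}$ into Theorem \ref{Theorem:p-div_companion_versal-1} to obtain a height 2, dimension 1, generically versally deformed $p$-divisible group $\mathscr{G}$ on $X$ with both ordinary and supersingular points, whose Dieudonné isocrystal $\mathbb{D}(\mathscr{G})\otimes\mathbb{Q}$ is compatible with $\mathscr{L}_{l}\otimes\mathbb{Q}_{l}(-1/2)$. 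Moreover, the single $p$-adic companion provided by $\mathbb{Q}$-compatibility is defined over $\mathbb{Q}_{p}$, so $\mathbb{D}(\mathscr{G})\otimes\mathbb{Q}$ is in fact an $F$-isocrystal with coefficients in $\mathbb{Q}_{p}$ up to a half-Tate twist.

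The heart of the argument is to promote $\mathscr{G}$ to a geometric family: produce a non-constant morphism $f\colon X\to X^{D}$ to some Shimura curve parameterizing fake elliptic curves with multiplication by a maximal order $\mathcal{O}_{D}$ in an indefinite quaternion algebra $D/\mathbb{Q}$, such that $\mathscr{L}_{l}$ (up to twist) appears as a Morita summand of the pullback $f^{*}(R^{1}\pi_{*}\mathbb{Q}_{l})$ of the universal family. This is precisely Conjecture \ref{Conjecture:rank_2_fake_elliptic_curve}, and it is where the main obstacle lies: algebraizing the abstract BT group $\mathscr{G}$ as coming from an actual family of abelian surfaces (compare Question \ref{Question:algebraize_G} and Conjecture \ref{Conjecture:pull_back_from_versal_family}) is a deep input that is not established in this paper and must be assumed or proved separately.

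Granting such an $f$, the completeness of $X$ does the rest. The image of $f$ is proper; modular curves (which parametrize elliptic curves, i.e.\ the case of split $D$) have cusps and are never complete, so the quaternion algebra $D$ must be non-split, and hence ramifies at some finite prime $l_{0}\neq p$. At that prime, the argument of Example \ref{Example:Fake_elliptic_curves_brauer} applies verbatim to $f$: the rank 4 local system $f^{*}R^{1}\pi_{*}\mathbb{Q}_{l_{0}}$ carries a commuting action of the division algebra $D\otimes\mathbb{Q}_{l_{0}}$, and the absolute irreducibility of $\mathscr{L}_{l_{0}}$ is incompatible with the existence of a rank 2 $\mathbb{Q}_{l_{0}}$-rational summand. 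This is a nonvanishing Brauer class in $H^{2}(\mathrm{Gal}(\overline{\mathbb{Q}}_{l_{0}}/\mathbb{Q}_{l_{0}}),\overline{\mathbb{Q}}_{l_{0}}^{*})$ of the kind built in Section \ref{Section:2_cocycle_obstruction_for_descent}, and it contradicts $\mathbb{Q}$-compatibility.

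The bottleneck is therefore the algebraization step: one must upgrade $\mathscr{G}$ to a family of abelian varieties mapping to an integral model of a Shimura curve. I do not see how to avoid this using only the tools developed so far. In principle one could hope to apply Lemma \ref{Lemma:supersingular_finite_image} directly — it would suffice to show that the $\mathbb{Q}_{p}$-companion is everywhere isoclinic — but without a Shimura-curve intermediary there seems to be no way for the completeness of $X$ to enter the purely $p$-adic picture, so I expect any honest proof to proceed via some form of Conjecture \ref{Conjecture:rank_2_fake_elliptic_curve} followed by the Brauer-class argument sketched above.
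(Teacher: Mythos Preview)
This statement is a \emph{conjecture} in the paper, not a theorem; the paper offers no proof. What the paper does provide is a one-sentence motivation immediately before the statement: modular curves are not complete, while the universal rank~2 local systems on Shimura curves for non-split $D$ fail to form a $\mathbb{Q}$-compatible system by Example~\ref{Example:Fake_elliptic_curves_brauer}. Your proposal is precisely an elaboration of this motivation into a conditional argument, and you correctly and explicitly flag that the crucial algebraization step (Conjecture~\ref{Conjecture:rank_2_fake_elliptic_curve}, or at least Question~\ref{Question:algebraize_G}) is unproved. So there is no ``paper's proof'' to compare against; your write-up is an accurate reconstruction of why the author expects the conjecture to hold, together with an honest identification of the missing ingredient.

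One small correction: you write that ``the single $p$-adic companion provided by $\mathbb{Q}$-compatibility is defined over $\mathbb{Q}_{p}$.'' The hypothesis of the conjecture only concerns $\{\mathscr{L}_{l}\}_{l\neq p}$, so $\mathbb{Q}$-compatibility by itself says nothing about the $p$-adic side; the paper makes exactly this point in the paragraph introducing Theorem~\ref{Theorem:F-Isoc-Q-Finite-Monodromy}. That the crystalline companion descends to $\mathbb{Q}_{p}$ is a \emph{consequence} of Corollary~\ref{Corollary:rank2_p-adic_companion_Qp}, not an input from the compatible system. This does not affect your argument, since Theorem~\ref{Theorem:p-div_companion_versal-1} only needs Frobenius traces in $\mathbb{Q}$, but the phrasing is misleading.
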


In particular, if Conjecture \ref{Conjecture:rank_2_fake_elliptic_curve}
is true for $(X,\mathscr{L}_{l})$, then Conjecture \ref{Conjecture:complete_curve_finite_monodromy}
is also true: if the monodromy were not finite, then as in Example
\ref{Example:Fake_elliptic_curves_brauer} it would follow that there
exists an $l'$ so that the $\overline{\mathbb{Q}}_{l'}$-companion
to $\mathscr{L}_{l}$ cannot be defined over $\mathbb{Q}_{l}$, contradicting
the assumption. Therefore Conjecture \ref{Conjecture:complete_curve_finite_monodromy}
may be used to \emph{falsify} Conjecture \ref{Conjecture:rank_2_fake_elliptic_curve}.

Using our techniques we can prove the related Theorem \ref{Theorem:F-Isoc-Q-Finite-Monodromy}:
it is a straightforward application of \cite[Corollaire 2.2]{lafforgue2011estimees}
. Note that, in the context of Conjecture \ref{Conjecture:complete_curve_finite_monodromy},
the hypothesis of Theorem \ref{Theorem:F-Isoc-Q-Finite-Monodromy}
is stronger exactly one way: namely, one assumes that the $p$-adic
companion of $\{\mathscr{L}_{l}\}_{l\neq p}$ has coefficients in
$\mathbb{Q}_{p}$. Note also that Theorem \ref{Theorem:F-Isoc-Q-Finite-Monodromy}
does not assume that $X$ is complete. 
\begin{thm}
\label{Theorem:F-Isoc-Q-Finite-Monodromy}Let $X$ be a curve over
$\mathbb{F}_{q}$. Let $\mathcal{E}\in\textbf{F-Isoc}^{\dagger}(X)$
be an overconvergent $F$-isocrystal on $X$ with coefficients in
$\mathbb{Q}_{p}$ that is rank 2, absolutely irreducible, and has
finite determinant. Suppose further that the field of traces of $\mathcal{E}$
is $\mathbb{Q}$. Then $\mathcal{E}$ has finite monodromy.
\end{thm}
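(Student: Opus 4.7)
The strategy is to exploit the $\mathbb{Q}_p$-coefficient hypothesis together with Lafforgue's rank-$2$ slope bound to force $\mathcal{E}$ to be everywhere unit root, and then to use a Kronecker-plus-Burnside argument in the spirit of Lemma~\ref{Lemma:supersingular_finite_image}.

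The first step will be to show that $\mathcal{E}$ is everywhere of slope $0$. Theorem~\ref{Theorem:Abe_correspondence} provides, for any $\ell\neq p$, an $\ell$-adic companion $\mathscr{L}$ of $\mathcal{E}$ that is absolutely irreducible with finite determinant. Since companions share Frobenius characteristic polynomials, Lafforgue's slope bound (Theorem~\ref{Theorem:rank_2_slope_bounds}) forces the slopes of $\mathcal{E}$ at each closed point $x\in|X|$ to lie in $[-\tfrac{1}{2},\tfrac{1}{2}]$, while finite determinant forces the two slopes at $x$ to sum to $0$. Because the coefficient field is $\mathbb{Q}_p$, the Dieudonn\'e-Manin classification restricts the possible slope multisets at $x$ to pairs of integers (coming from two rank-$1$ integral-slope pieces) or the pair $(\tfrac{1}{2},\tfrac{1}{2})$ (coming from the single simple rank-$2$ piece $E^{1/2}$). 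The slope bound kills all integer pairs except $(0,0)$, and the sum-zero constraint kills $(\tfrac{1}{2},\tfrac{1}{2})$, so the slope multiset at every closed point is $(0,0)$.

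Next, I will show that the Frobenius eigenvalues $\alpha_1,\alpha_2$ at each closed point $x$ are roots of unity. By Theorem~\ref{Theorem:E_motive}, $\mathcal{E}$ lies in a complete $E$-compatible system for some number field $E$, so each $\alpha_i$ is algebraic. The hypothesis on the field of traces gives $\alpha_1+\alpha_2\in\mathbb{Q}$, and finite determinant gives that $\alpha_1\alpha_2$ is a root of unity. At every place $v$ of $\mathbb{Q}(\alpha_1,\alpha_2)$ the absolute value $|\alpha_i|_v$ should be $1$: at archimedean places by Deligne's weight-$0$ purity (part~$(1)$ of Conjecture~\ref{Conjecture:Deligne}, known on curves); at $\ell$-adic places for $\ell\neq p$ by part~$(3)$ of the same conjecture; and at $p$-adic places by Step~$1$. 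Kronecker's theorem then forces each $\alpha_i$ to be a root of unity.

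For the final step, the $\alpha_i$ lie in an extension of $\mathbb{Q}$ of degree $\leq 2$, so there are only finitely many possibilities for $(\alpha_1,\alpha_2)$, and in particular the traces take only finitely many values as $x$ ranges over $|X|$. Passing to the $\ell$-adic companion $\mathscr{L}$ and invoking Burnside exactly as in the last paragraph of the proof of Lemma~\ref{Lemma:supersingular_finite_image}, the associated representation $\varrho_\ell$ has finite image; hence $\mathscr{L}$ is trivialized by a finite \'etale cover $X'\to X$. Applying the Brauer-Nesbitt theorem for coefficient objects (Theorem~\ref{Theorem:Brauer_Nesbitt_coefficients}) to $\mathcal{E}|_{X'}$ and the trivial rank-$2$ $F$-isocrystal on $X'$ then shows that $\mathcal{E}|_{X'}$ is trivial, so $\mathcal{E}$ has finite monodromy. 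The main obstacle is the slope analysis in Step~$1$: it is crucial that the coefficient field is exactly $\mathbb{Q}_p$ rather than a larger $p$-adic field, because then the slope $\tfrac{1}{2}$ can only appear as the multiset $(\tfrac{1}{2},\tfrac{1}{2})$, and this is precisely what lets the sum-zero condition rule out the supersingular case and force everywhere unit root.
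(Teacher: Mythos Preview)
Your proof is correct and follows essentially the same route as the paper's. The paper's argument is: (i) the slope bound plus the $\mathbb{Q}_p$-coefficient constraint force $\mathcal{E}_x$ to be isoclinic at every closed point; (ii) since the characteristic polynomials lie in $\mathbb{Q}[t]$ and there is a \emph{unique} embedding $\mathbb{Q}\hookrightarrow\overline{\mathbb{Q}}_p$, every $p$-adic companion of $\mathcal{E}$ is isomorphic to $\mathcal{E}$ itself; (iii) hence Lemma~\ref{Lemma:supersingular_finite_image} applies directly. Your Steps~2--3 simply unpack that lemma's Kronecker-plus-Burnside argument, and your Step~1 makes the ``isoclinic of slope $0$'' conclusion explicit rather than leaving it to the sum-zero observation inside Lemma~\ref{Lemma:supersingular_finite_image}.

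One small point worth tightening: in Step~2 you write ``at $p$-adic places by Step~1,'' but Step~1 as stated only controls the Newton polygon of $P_x(\mathcal{E},t)$ for the given realization in $\mathbb{Q}_p$. To conclude that $|\alpha_i|_v=1$ at \emph{every} place $v\mid p$ of $\mathbb{Q}(\alpha_1,\alpha_2)$, you need the observation the paper makes explicit: because $P_x(\mathcal{E},t)\in\mathbb{Q}[t]$, any embedding into $\overline{\mathbb{Q}}_p$ yields the same multiset of root valuations (equivalently, every crystalline companion of $\mathcal{E}$ is $\mathcal{E}$). With that sentence added, your argument is complete and matches the paper's.
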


\begin{proof}
We claim that $\mathscr{E}$ is isoclinic at every closed point $x\in|X|$.
Indeed, by \cite[Corollaire 2.2]{lafforgue2011estimees}, the slopes
of $\mathscr{E}_{x}$ differ by at most 1, forbidding slopes of the
form $(-a,a)$ for $0\neq a\in\mathbb{Z}$. As the coefficients of
$\mathscr{E}$ are $\mathbb{Q}_{p}$, any fractional slope must appear
more than once. As there is a \emph{unique }embedding $\mathbb{Q}\hookrightarrow\mathbb{Q}_{p}$,
this implies that any $p$-adic companion to $\mathscr{E}$ is isomorphic
to $\mathscr{E}$ itself by \cite[A.4.1]{abe2013langlands}. Therefore,
we may conclude by Lemma \ref{Lemma:supersingular_finite_image}.
\end{proof}
\begin{rem}
Note that Theorem \ref{Theorem:F-Isoc-Q-Finite-Monodromy} uses Lemma
\ref{Lemma:supersingular_finite_image} which critically uses \cite[Theorem 0.2.1]{kedlayacompanions},
i.e., a partial resolution to Deligne's companions conjecture. In
particular, we use that $\mathscr{E}$ lives in a complete compatible
system.
\end{rem}

\begin{cor}
Let $X$ be a curve over $\mathbb{F}_{q}$. Let $\mathscr{E}\in\textbf{F-Isoc}^{\dagger}(X)$
be an overconvergent $F$-isocrystal on $X$ (with coefficients in
$\mathbb{Q}_{p}$) that is rank 2, absolutely irreducible, and has
determinant $\mathbb{Q}_{p}(i)$ for $i\in2\mathbb{Z}$. Suppose further
that the field of traces of $\mathscr{E}$ is $\mathbb{Q}$. Then
$\mathscr{E}$ has finite monodromy.
\end{cor}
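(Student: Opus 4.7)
The plan is to reduce the corollary directly to Theorem \ref{Theorem:F-Isoc-Q-Finite-Monodromy} via an integer Tate twist, using crucially that the hypothesis $i \in 2\mathbb{Z}$ makes the half-twist $\mathbb{Q}_p(-i/2)$ an honest object of $\textbf{F-Isoc}^{\dagger}(X)$ with coefficients in $\mathbb{Q}_p$. Concretely, I would introduce $\mathcal{E}' := \mathcal{E} \otimes \mathbb{Q}_p(-i/2)$ and verify that this twist satisfies all the hypotheses of Theorem \ref{Theorem:F-Isoc-Q-Finite-Monodromy}, so that the conclusion for $\mathcal{E}$ follows from the conclusion for $\mathcal{E}'$ together with the observation that an integer Tate twist is pulled back from $\mathrm{Spec}(\mathbb{F}_q)$ and therefore contributes no geometric monodromy.

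For the checklist on $\mathcal{E}'$: the tensor product of an object with coefficients in $\mathbb{Q}_p$ with the rank-one $\mathbb{Q}_p$-object $\mathbb{Q}_p(-i/2)$ still has coefficients in $\mathbb{Q}_p$; twisting by an invertible rank-one object preserves rank $2$ and absolute irreducibility. The determinant behaves as
\[
\det(\mathcal{E}') = \det(\mathcal{E}) \otimes \mathbb{Q}_p(-i/2)^{\otimes 2} = \mathbb{Q}_p(i) \otimes \mathbb{Q}_p(-i) = \mathbb{Q}_p,
\]
which is trivial, hence certainly finite. For the field of traces, at any closed point $x \in |X|$ of residue degree $d$ the Frobenius $F^d$ acts on $\mathbb{Q}_p(-i/2)$ by the scalar $p^{di/2} \in \mathbb{Q}$ (note $di/2 \in \mathbb{Z}$ since $i$ is even), so the characteristic polynomial $P_x(\mathcal{E}',t)$ is obtained from $P_x(\mathcal{E},t)$ by scaling the variable by $p^{di/2}$; in particular its coefficients remain in $\mathbb{Q}$.

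Now applying Theorem \ref{Theorem:F-Isoc-Q-Finite-Monodromy} to $\mathcal{E}'$ yields that $\mathcal{E}'$ has finite monodromy, i.e.\ $\mathcal{E}'$ is trivialized on a finite \'etale cover $C' \to X$ (in the sense of Lemma \ref{Lemma:supersingular_finite_image}). Pulling back $\mathcal{E} = \mathcal{E}' \otimes \mathbb{Q}_p(i/2)$ along this cover gives a direct sum of two copies of a Tate twist, which has finite (geometric) monodromy — equivalently, $\mathcal{E}$ has finite monodromy in the sense that it becomes a sum of Tate twists on a finite \'etale cover. There is no genuine obstacle here; the only small point to be careful about is the parity condition $i \in 2\mathbb{Z}$, which is exactly what allows $\mathbb{Q}_p(-i/2)$ to exist as a $\mathbb{Q}_p$-object and hence what allows the reduction to Theorem \ref{Theorem:F-Isoc-Q-Finite-Monodromy} to take place without enlarging coefficients.
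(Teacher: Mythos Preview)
Your proposal is correct and follows essentially the same route as the paper's proof: twist by $\mathbb{Q}_p(-i/2)$, which lives in $\textbf{F-Isoc}^{\dagger}(X)$ precisely because $i$ is even, and apply Theorem \ref{Theorem:F-Isoc-Q-Finite-Monodromy}. Your version is in fact more careful than the paper's one-line proof (which writes $\mathcal{E}(\frac{i}{2})$, arguably a sign slip) in that you explicitly verify the determinant becomes trivial, the field of traces remains $\mathbb{Q}$, and an integer Tate twist does not affect the finiteness of monodromy.
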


\begin{proof}
As $i$ is even, $\mathscr{E}(\frac{i}{2})\in\textbf{F-Isoc}^{\dagger}(X)$,
i.e. $\mathscr{E}(\frac{i}{2})$ has coefficients in $\mathbb{Q}_{p}$.
Apply Theorem \ref{Theorem:F-Isoc-Q-Finite-Monodromy}.
\end{proof}

\section{\label{Section:Shimura_Curves}Application to Shimura Curves}

In this section, we indicate a criterion for an étale correspondence
of projective hyperbolic curves over $\mathbb{F}_{q}$ to be the reduction
modulo $p$ of some Shimura curves over $\mathbb{C}$. Our goal was
to find a criterion that was as ``group theoretic'' as possible.
\begin{defn}
Let $X\leftarrow Z\rightarrow Y$ be a correspondence of smooth curves
over $k$. We say it \emph{has no core} if $k(X)\cap k(Y)$ has transcendence
degree 0 over $k$.
\end{defn}

For much more on the theory of correspondences without a core, see
our article \cite{krishnamoorthy2017correspondences}. In general,
correspondences of curves do not have cores. However, in the case
of étale correspondences over fields of characteristic 0, we have
the following remarkable result of Margulis, see \cite[Theorem 27]{margulis1991}
and \cite[Proposition 2.4]{mochizuki1998correspondences}.
\begin{thm}
(Margulis) If $X\leftarrow Z\rightarrow Y$ is a finite étale correspondence
of smooth hyperbolic curves without a core over a field $k$ of characteristic
0, then $X,Y,$ and $Z$ are all Shimura (arithmetic) curves. In particular,
all of the curves and maps can be defined over $\overline{\mathbb{Q}}$.
\end{thm}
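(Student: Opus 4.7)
The plan is to reduce to the case $k=\mathbb{C}$, uniformize the curves by the upper half-plane $\mathbb{H}$, translate the ``no core'' hypothesis into non-discreteness of the commensurator of the associated Fuchsian lattice, and invoke Margulis' commensurator theorem to deduce arithmeticity.

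First I would base-change to $\mathbb{C}$. The correspondence is defined over a finitely generated subfield of $k$, which embeds into $\mathbb{C}$; and being a Shimura curve descends from $\mathbb{C}$ to $\overline{\mathbb{Q}}$ because Shimura curves admit canonical models over number fields, so it suffices to work over $\mathbb{C}$. Each of $X,Y,Z$ is then a quotient $\Gamma\backslash\mathbb{H}$ by a Fuchsian lattice (passing to a torsion-free finite-index subgroup if there are orbifold points, which does not affect the question). Fixing a compatible base point in $\mathbb{H}$, the étale map $Z\to X$ realizes $\Gamma_Z$ as a finite-index subgroup of $\Gamma_X$, and the étale map $Z\to Y$ realizes $\Gamma_Z$, after conjugation by some $h\in PSL_2(\mathbb{R})$, as a finite-index subgroup of $h\Gamma_Y h^{-1}$. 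In particular $\Gamma_X$ and $h\Gamma_Y h^{-1}$ are commensurable in $PSL_2(\mathbb{R})$, so $h$ lies in $\mathrm{Comm}_{PSL_2(\mathbb{R})}(\Gamma_X)$.

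Next I would argue that the absence of a core implies $[\mathrm{Comm}_{PSL_2(\mathbb{R})}(\Gamma_X):\Gamma_X]=\infty$. If the commensurator were itself a lattice $\Lambda\supset\Gamma_X$ of finite index also containing $h\Gamma_Y h^{-1}$ with finite index---equivalently, if the subgroup generated by $\Gamma_X$ and $h\Gamma_Y h^{-1}$ inside $PSL_2(\mathbb{R})$ were discrete---then $\Lambda\backslash\mathbb{H}$ would be a hyperbolic curve receiving finite morphisms from both $X$ and $Y$, providing a common subfield of $k(X)$ and $k(Y)$ of transcendence degree $1$, i.e.\ a core. So the hypothesis forces the commensurator of $\Gamma_X$ to be non-discrete. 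By Margulis' commensurator theorem, a lattice in a connected simple non-compact Lie group with trivial center (such as $PSL_2(\mathbb{R})$) is arithmetic if and only if its commensurator is non-discrete, equivalently dense. Hence $\Gamma_X$ is arithmetic, so $X$ is a Shimura curve; the same reasoning applies to $Y$ and $Z$. The descent of the curves and the correspondence to $\overline{\mathbb{Q}}$ then follows from the existence of canonical models of Shimura curves and of Hecke correspondences between them.

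The hard part will be the careful translation of ``no core'' into non-discreteness of the commensurator: one must verify that a discrete subgroup of $PSL_2(\mathbb{R})$ containing $\Gamma_X$ and $h\Gamma_Y h^{-1}$ with finite index yields not just a common analytic quotient but an \emph{algebraic} curve whose function field embeds into both $k(X)$ and $k(Y)$, and that the potential orbifold structure can be dealt with by passing to torsion-free finite-index subgroups without destroying either the étaleness or the ``no core'' hypothesis. The other substantial ingredient, Margulis' arithmeticity theorem for lattices with dense commensurator, is external to the argument and used as a black box.
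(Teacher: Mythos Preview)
The paper does not give its own proof of this theorem: it is simply stated with a citation to Mochizuki \cite{mochizuki1998correspondences} and used as a black box in Section~\ref{Section:Shimura_Curves}. So there is nothing in the paper to compare your proposal against.

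That said, your sketch is essentially the argument Mochizuki gives: reduce to $\mathbb{C}$, uniformize by $\mathbb{H}$, and use Margulis' dichotomy (a lattice in $PSL_2(\mathbb{R})$ is arithmetic iff its commensurator is dense) to force arithmeticity from the absence of a core. One small logical wrinkle in your write-up: you phrase the contrapositive as ``if the commensurator were itself a lattice $\Lambda$,'' but that is not the \emph{a priori} alternative to denseness. The clean way to run the argument is to first invoke Margulis' dichotomy---either $\Gamma_X$ is arithmetic or $[\mathrm{Comm}(\Gamma_X):\Gamma_X]<\infty$---and then in the second case take $\Lambda=\mathrm{Comm}(\Gamma_X)$, which automatically contains every group commensurable with $\Gamma_X$ (in particular $h\Gamma_Y h^{-1}$), and pass to $\Lambda\backslash\mathbb{H}$ as the core. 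You correctly flag the orbifold issue and the algebraicity of the quotient as the points needing care; these are genuinely where the work lies in Mochizuki's treatment.
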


\begin{rem}
Hecke correspondences of modular/Shimura curves furnish examples of
étale correspondences without a core.
\end{rem}

Our strategy will therefore be to find an additional structure on
an étale correspondence without a core such that the whole picture
\emph{canonically lifts} from $\mathbb{F}_{q}$ to characteristic
0 and apply Margulis' theorem. To make this strategy work, we need
two inputs. 
\begin{lem}
\label{Lemma:lift_no_core}Let $X\leftarrow Z\rightarrow Y$ be an
étale correspondence of projective hyperbolic curves over $\mathbb{F}$
without a core. If the correspondence lifts to $W(\mathbb{F})$, then
the lifted correspondence is étale and has no core. In particular,
$X$, $Z$, and $Y$ are the reductions modulo $p$ of Shimura curves.
\end{lem}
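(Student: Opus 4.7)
My plan is to proceed in three steps: first verify that the lifted correspondence remains étale, then show that it has no core, and finally invoke Mochizuki's theorem over the characteristic-zero field $K(\mathbb{F}):=W(\mathbb{F})\otimes\mathbb{Q}$.

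The étaleness of the lift should be essentially automatic. Writing the lifted correspondence as $\widetilde{X}\leftarrow\widetilde{Z}\rightarrow\widetilde{Y}$, the three schemes are smooth projective of relative dimension one over $W(\mathbb{F})$, and the morphism $\widetilde{Z}\to\widetilde{X}$ is proper. Its generic fibre is non-constant, hence finite; its special fibre $Z\to X$ is finite étale by hypothesis; so $\widetilde{Z}\to\widetilde{X}$ has finite fibres and is therefore finite, and miracle flatness between regular schemes of the same dimension makes it flat. The coherent sheaf $\Omega_{\widetilde{Z}/\widetilde{X}}$ vanishes on the special fibre, and properness of $\widetilde{Z}$ over the local base $\mathrm{Spec}\,W(\mathbb{F})$ forces it to vanish globally, so $\widetilde{Z}\to\widetilde{X}$ is étale. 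The same reasoning applies to $\widetilde{Z}\to\widetilde{Y}$.

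The main content, and the step I expect to be the most delicate, is transferring the ``no core'' hypothesis from the special to the generic fibre. I will argue by contradiction, using the valuation rings at the generic points of the special fibres. Let $R_X, R_Y, R_Z$ denote the local rings of $\widetilde{X}, \widetilde{Y}, \widetilde{Z}$ at these points; each is a discrete valuation ring with uniformizer $p$ and residue field $k(X), k(Y), k(Z)$ respectively. Suppose $L:=K(\widetilde{X}_\eta)\cap K(\widetilde{Y}_\eta)$, computed inside $K(\widetilde{Z}_\eta)$, has transcendence degree one over $K(\mathbb{F})$. Set $R_L:=R_X\cap R_Y\subset R_Z$. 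Since $v_Z(p)=1$, the restriction $v_L:=v_Z|_L$ has value group $\mathbb{Z}$, so $R_L$ is the DVR of $v_L$ in $L$, with uniformizer $p$. The inclusion $L\subset K(\widetilde{X}_\eta)$ is a finite extension of valued fields with equal value groups, so the residue extension $\kappa:=R_L/pR_L\hookrightarrow k(X)$ is finite (the injectivity comes from $pR_X\cap R_L=pR_L$, which follows from $v_L=v_X|_L$); hence $\mathrm{trdeg}_{\mathbb{F}}\kappa=\mathrm{trdeg}_{\mathbb{F}}k(X)=1$. The symmetric argument gives an inclusion $\kappa\hookrightarrow k(Y)$, and the two inclusions agree inside $k(Z)$ because they both arise from $R_L\hookrightarrow R_Z$. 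Thus $k(X)\cap k(Y)\subset k(Z)$ contains a subfield of transcendence degree one over $\mathbb{F}$, contradicting the no-core hypothesis on the special fibre.

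Having established that the lifted correspondence is étale and has no core, Mochizuki's theorem applies to the generic-fibre correspondence $\widetilde{X}_\eta\leftarrow\widetilde{Z}_\eta\to\widetilde{Y}_\eta$ of hyperbolic curves over $K(\mathbb{F})$, realising $\widetilde{X}_\eta, \widetilde{Y}_\eta, \widetilde{Z}_\eta$ as Shimura curves. Their reductions modulo $p$ recover $X, Y, Z$, completing the plan. The principal conceptual input, beyond the étaleness verification, is the ``divisorial transfer'' that exhibits a characteristic-$p$ core from a hypothetical characteristic-$0$ one; once this is in place, the rest is a direct application of the cited theorems.
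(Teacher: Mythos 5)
Your proof is correct and follows the same three-step skeleton as the paper (\'etaleness of the lift, transfer of the no-core property, then Mochizuki on the generic fibre), but it fills in the first two steps with genuinely different, self-contained arguments where the paper simply cites. For \'etaleness the paper invokes Zariski--Nagata purity (the branch divisor, being a divisor on a scheme proper over the local base $W(\mathbb{F})$, would have to meet the special fibre, where the map is \'etale); your route via finiteness, miracle flatness, and Nakayama applied to $\Omega_{\widetilde{Z}/\widetilde{X}}$ is an equally valid and arguably more elementary alternative --- the only point you gloss is why the generic fibre of $\widetilde{Z}\to\widetilde{X}$ is non-constant, which follows since the image of $\widetilde{Z}$ is closed, irreducible, contains the special fibre, and dominates $\mathrm{Spec}\,W(\mathbb{F})$ by flatness, hence is all of $\widetilde{X}$. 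For the no-core transfer the paper cites \cite[Lemma 4.13]{krishnamoorthy2017correspondences}; your valuation-theoretic specialization argument (restricting $v_Z$ to the hypothetical characteristic-zero core $L$, noting $R_X\cap R_Y=L\cap R_Z$ is a DVR with uniformizer $p$, and pushing its residue field into $k(X)\cap k(Y)$) is essentially a direct proof of that lemma in this setting, and is sound: the only small points worth spelling out are that $K(\widetilde{X}_\eta)/L$ is finite because $L$ is a finitely generated subfield of $K(\widetilde{Z}_\eta)$ of the same transcendence degree, and that the residue extension is algebraic by the fundamental inequality $ef\le n$. What your approach buys is independence from the external reference; what the paper's buys is brevity. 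The final appeal to Mochizuki is identical in both (one should note, as is implicit in your write-up, that the generic fibres are still hyperbolic since the genus is constant in the flat family).
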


\begin{proof}
That the lifted correspondence is étale follows from open-ness of
the étale locus. It has no core by \cite[Lemma 4.14]{krishnamoorthy2017correspondences}.
Apply Margulis' theorem to the generic fiber.
\end{proof}
Xia proved the following theorem \cite[Theorem 1.2]{xia2013deformation},
which may be thought of as ``Serre-Tate canonical lift'' in for
hyperbolic curves.
\begin{thm}
\label{Theorem:xia_lift}Let $k$ be a perfect of characteristic $p$
and let $X$ be a smooth proper hyperbolic curve over $k$. Let $\mathscr{G}\rightarrow X$
be a height 2, dimension 1 BT group over $X$. If $\mathscr{G}$ is
everywhere versally deformed on $X$, then there is a unique curve
$\tilde{X}$ over $W(k)$ which is a lift of $X$ and admits a lift
$\tilde{\mathscr{G}}$ of $\mathscr{G}$. Furthermore, the lift $\tilde{\mathscr{G}}$
is unique.
\end{thm}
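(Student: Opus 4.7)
The plan is to build the lift by a Serre-Tate style induction on $n$, producing a compatible system of lifts $(\tilde{X}_n, \tilde{\mathscr{G}}_n)$ over $W_n(k) = W(k)/p^n$, and then to algebraize the resulting formal object using Grothendieck's formal existence theorem. The hyperbolicity of $X$ and the everywhere-versal hypothesis will enter at the end of each inductive step to force uniqueness.

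For the inductive step, suppose $(\tilde{X}_n, \tilde{\mathscr{G}}_n)$ is given; I would first lift the curve, then the BT group. Lifts $\tilde{X}_{n+1}$ of $\tilde{X}_n$ to $W_{n+1}(k)$ form a torsor under $H^1(X, T_X)$, and they all exist because the obstruction lies in $H^2(X, T_X) = 0$ on a curve. For each such $\tilde{X}_{n+1}$, the obstruction to extending $\tilde{\mathscr{G}}_n$ to a BT group over $\tilde{X}_{n+1}$ lies in $H^1(X, \omega^{*} \otimes \alpha)$, where $\omega$ and $\alpha$ are the Hodge bundles attached to $\mathscr{G}$ and $\mathscr{G}^{t}$. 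The key point is that the dependence of this obstruction on the class of $\tilde{X}_{n+1}$ in $H^1(X, T_X)$ is governed by the Kodaira-Spencer map: changing $\tilde{X}_{n+1}$ by $\epsilon \in H^1(X, T_X)$ changes the obstruction class by $H^1(KS)(\epsilon)$. Since $\mathscr{G}$ is everywhere versally deformed, $KS \colon T_X \to \omega^{*} \otimes \alpha$ is an isomorphism of line bundles on $X$, so $H^1(KS)$ is an isomorphism as well. Hence there is exactly one class $\epsilon \in H^1(X, T_X)$ that kills the obstruction, producing a unique $\tilde{X}_{n+1}$ admitting a lift of $\tilde{\mathscr{G}}_n$. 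The set of such lifts over this distinguished $\tilde{X}_{n+1}$ is a torsor under $H^0(X, \omega^{*} \otimes \alpha)$, which via $KS$ is identified with $H^0(X, T_X)$; this vanishes because $X$ is hyperbolic, so $g \geq 2$. Hence $\tilde{\mathscr{G}}_{n+1}$ is unique as well.

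For the algebraization step, the inductive construction yields a unique formal scheme $\mathfrak{X}$ over $\mathrm{Spf}\, W(k)$ together with a formal BT group $\mathfrak{G}$ on $\mathfrak{X}$. Pick any ample line bundle $L$ on $X$; because $H^2(X, \mathcal{O}_X) = 0$ it extends to compatible line bundles on every $\tilde{X}_n$ and hence to an ample line bundle on $\mathfrak{X}$. Grothendieck's formal existence theorem then produces a unique projective scheme $\tilde{X}$ over $W(k)$ lifting $X$ with formal completion $\mathfrak{X}$. For the BT group, each $\mathfrak{G}[p^n]$ is a finite locally free group scheme over the proper formal scheme $\mathfrak{X}$ and algebraizes to a finite flat group scheme over $\tilde{X}$; taking the inductive limit yields the desired $\tilde{\mathscr{G}}$, which is unique since $\mathfrak{G}$ was.

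The hard part is establishing the deformation-theoretic input cleanly: that the Kodaira-Spencer map computes the exact difference of obstructions for lifting $\mathscr{G}$ as one varies the lift of $X$, so that the versality hypothesis (KS an isomorphism of line bundles) translates directly into the uniqueness of $\tilde{X}$ among all lifts. This identification is the content of Illusie's crystalline deformation theory for BT groups \cite{illusie1985deformations} (see also Section 2.5 of \cite{de1995crystalline}), and it is precisely what allows Xia to run this Serre-Tate style argument in families.
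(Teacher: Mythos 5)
The paper does not actually prove this statement: its ``proof'' is the single line ``This is \cite[Theorem 1.2]{xia2013deformation}'', so there is nothing internal to compare your argument against. What you have written is, in substance, a correct reconstruction of the argument in Xia's paper: lift over $W_n(k)$ step by step; use Grothendieck--Messing/Illusie to identify the obstruction to lifting the BT group over a chosen $\tilde{X}_{n+1}$ with a class in $H^1(X,\omega^*\otimes\alpha)$ and the ambiguity in the lift with $H^0(X,\omega^*\otimes\alpha)$; observe that varying the lift of the curve by $\epsilon\in H^1(X,T_X)$ translates the obstruction by $H^1(KS)(\epsilon)$; conclude from ``everywhere versally deformed'' that $KS$ is a surjection, hence an isomorphism, of line bundles (this is where height $2$, dimension $1$ is used, so that $\omega^*\otimes\alpha$ has rank $cd=1$), so exactly one lift of the curve works; kill the remaining ambiguity with $H^0(X,T_X)=0$ from hyperbolicity; and algebraize by Grothendieck existence, lifting an ample bundle via $H^2(X,\mathcal{O}_X)=0$ and algebraizing $\mathscr{G}$ through its finite levels $\mathscr{G}[p^m]$.

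Two points deserve more care than your sketch gives them, though neither is a fatal gap. First, the statement that the obstruction class is \emph{exactly} translated by $H^1(KS)(\epsilon)$ is the crux; it follows from the crystal structure on $\mathbb{D}(\mathscr{G})$ (two local lifts of $\tilde{X}_n$ differ by a derivation, and the induced change of the Hodge filtration inside the canonically identified $\mathbb{D}$-evaluation is computed by $\nabla$ followed by projection, i.e.\ by $KS$), and you should say this is an identification of torsors under $H^1(X,T_X)$ and $H^1(X,\omega^*\otimes\alpha)$ rather than just a formula for differences of obstructions. Second, Grothendieck--Messing requires divided powers on the ideal $p^nW_{n+1}(k)$; for $p>2$ the canonical PD structure is nilpotent and everything is standard, but for $p=2$ one must invoke the usual workaround (e.g.\ Zink's displays or the variant of Grothendieck--Messing for topologically nilpotent PD thickenings). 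Also note that uniqueness of the full lift over $W(k)$ needs the inductive isomorphisms $\tilde{X}'_n\cong\tilde{X}_n$ to propagate canonically, which is guaranteed because $H^0(X,T_X)=0$ leaves no infinitesimal automorphisms; this is implicit in your torsor language but worth making explicit.
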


\begin{rem}
Note that the hypothesis of Theorem \ref{Theorem:xia_lift} implies
that $\mathscr{G}\rightarrow X$ is generically ordinary. This is
why we call it an analog to the Serre-Tate canonical lift.
\end{rem}

\begin{example}
\label{Example:shimura_igusa_not_everywhere_versally_deformed}Let
$D$ be a non-split quaternion algebra over $\mathbb{Q}$ that is
split at $\infty$ and let $p$ be a finite prime where $D$ splits.
The Shimura curve $X^{D}$ parametrizing fake elliptic curves for
$\mathcal{O}_{D}$ exists as a smooth complete curve (in the sense
of stacks) over $\mathbb{Z}[\frac{1}{2d}]$ and hence over $\mathbb{F}_{p}$.
(See Definition \ref{Definition:moduli_fake_elliptic_curves}.) Abusing
notation, we denote by $X^{D}$ the Shimura curve over $\mathbb{F}_{p}$.
It admits a universal abelian surface $f:\mathcal{A}\rightarrow X^{D}$
with multiplication by $\mathcal{O}_{D}$. As $D\otimes\mathbb{Q}_{p}\cong M_{2\times2}(\mathbb{Q}_{p}),$
one can use Morita equivalence, i.e. apply the idempotent
\[
\left(\begin{array}{cc}
1 & 0\\
0 & 0
\end{array}\right)
\]
on the height 4 dimension 1 BT group $\mathcal{A}[p^{\infty}]$ to
get a height 2 dimension 1 BT group $\mathscr{G}$, and in fact $\mathcal{A}[p^{\infty}]\cong\mathscr{G}\oplus\mathscr{G}$.
Here $\mathscr{G}$ is \emph{everywhere versally deformed} on $X^{D}$.
Moreover, $\mathscr{G}$ has both supersingular and ordinary points.
For similar discussion, see \cite[Section 5.1]{goren2005canonical}.

On the other hand, as in Example \ref{Example:modular_igusa_not_everywhere_versally_deformed},
let $X_{Ig}^{D}$ be the Igusa cover which trivializes $\mathscr{G}[p]^{et}$
on the ordinary locus. This cover is branched over the supersingular
locus of $\mathscr{G}$. Pulling back $\mathscr{G}$ to $X_{Ig}^{D}$
yields a BT group which is \emph{generically} \emph{versally deformed}
but not everywhere versally deformed.
\end{example}

\begin{cor}
\label{Corollary:p-div_shimura}Let $X\overset{f}{\leftarrow}Z\overset{g}{\rightarrow}X$
be an étale correspondence of projective hyperbolic curves without
a core over a perfect field $k$. Let $\mathscr{G}\rightarrow X$
be a BT group of height 2 and dimension 1 that is everywhere versally
deformed. Suppose further that $f^{*}\mathscr{G}\cong g^{*}\mathscr{G}$.
Then $X$ and $Z$ are the reduction modulo $p$ of Shimura curves.
\end{cor}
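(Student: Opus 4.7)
The plan is to lift the entire picture canonically from characteristic $p$ to $W(k)$ using Xia's theorem and then apply Lemma~\ref{Lemma:lift_no_core}. We may assume $k$ is algebraically closed (extend scalars and descend the Shimura conclusion at the end, or work directly with $k = \mathbb{F}$ as in Lemma~\ref{Lemma:lift_no_core}).

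First I would check that $f^*\mathscr{G}$ and $g^*\mathscr{G}$ are everywhere versally deformed on $Z$. Since $f$ and $g$ are \'etale, $T_Z \cong f^*T_X \cong g^*T_X$, and the Kodaira--Spencer map of a pullback BT group is the pullback of the original Kodaira--Spencer map; surjectivity is preserved fiberwise. Writing $\mathscr{H} := f^*\mathscr{G} \cong g^*\mathscr{G}$, we have a height 2, dimension 1 BT group $\mathscr{H}$ on the projective hyperbolic curve $Z$ which is everywhere versally deformed. Apply Theorem~\ref{Theorem:xia_lift} to the pairs $(X,\mathscr{G})$ and $(Z,\mathscr{H})$ to produce unique compatible lifts $(\tilde{X},\tilde{\mathscr{G}})$ and $(\tilde{Z},\tilde{\mathscr{H}})$ over $W(k)$.

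The central step is to promote $f$ and $g$ to morphisms over $W(k)$. Because $f\colon Z\to X$ is \'etale, the invariance of the \'etale site under nilpotent thickenings produces a unique \'etale lift $\tilde{f}\colon \tilde{Z}_f\to \tilde{X}$ of $f$. The pair $(\tilde{Z}_f,\tilde{f}^*\tilde{\mathscr{G}})$ is a lift of $(Z,f^*\mathscr{G}) = (Z,\mathscr{H})$, and the BT group $\tilde{f}^*\tilde{\mathscr{G}}$ reduces to the everywhere versally deformed $\mathscr{H}$. By the uniqueness clause of Xia's theorem, $(\tilde{Z}_f,\tilde{f}^*\tilde{\mathscr{G}}) \cong (\tilde{Z},\tilde{\mathscr{H}})$. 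The same argument applied to $g$ yields $\tilde{g}\colon \tilde{Z}\to \tilde{X}$, and together we obtain an \'etale correspondence $\tilde{X}\xleftarrow{\tilde{f}} \tilde{Z}\xrightarrow{\tilde{g}} \tilde{X}$ over $W(k)$ lifting the original. Lemma~\ref{Lemma:lift_no_core} then finishes the proof, since that lemma upgrades our lift to the statement that $X$, $Z$ are reductions modulo $p$ of Shimura curves.

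The main obstacle, though more conceptual than technical, is the identification $\tilde{Z}_f \cong \tilde{Z} \cong \tilde{Z}_g$: a priori the \'etale lifts of $Z$ mandated by $f$ and by $g$ need not coincide, and it is exactly the hypothesis $f^*\mathscr{G}\cong g^*\mathscr{G}$, combined with the uniqueness half of Xia's canonical lift theorem, that forces them to agree. Without that hypothesis one would only get two possibly distinct lifts of $Z$ and no coherent correspondence in characteristic zero to feed into Mochizuki's theorem.
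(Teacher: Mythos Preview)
Your proof is correct and follows essentially the same route as the paper: use \'etaleness to see that $f^*\mathscr{G}$ and $g^*\mathscr{G}$ are everywhere versally deformed, apply Xia's canonical lift (Theorem~\ref{Theorem:xia_lift}) to $(X,\mathscr{G})$ and to $(Z,f^*\mathscr{G})\cong(Z,g^*\mathscr{G})$, and use the uniqueness clause to identify the two \'etale lifts $\tilde{Z}_f$ and $\tilde{Z}_g$ of $Z$ induced by $\tilde{X}$, then finish with Lemma~\ref{Lemma:lift_no_core}. If anything, your write-up is a bit more explicit than the paper's about why the two a priori different lifts of $Z$ agree, and about the passage to algebraically closed $k$ needed to invoke Xia's theorem as stated.
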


\begin{proof}
Any lift $\tilde{X}$ of $X$ naturally induces lifts $\tilde{Z}_{f}$
and $\tilde{Z}_{g}$ of $Z$ because $f$ and $g$ are étale and the
goal is to find a lift $\tilde{X}$ such that $\tilde{Z}_{f}\cong\tilde{Z}_{g}$.
Note that $f^{*}\mathscr{G}$ and $g^{*}\mathscr{G}$ are everywhere
versally deformed on $Z$ because $f$ and $g$ are étale. By Theorem
\ref{Theorem:xia_lift}, the pairs $(X,\mathscr{G})$, $(Z,f^{*}\mathscr{G})$,
and $(Z,g^{*}\mathscr{G}$) canonically lift to $W(k)$. As $f^{*}\mathscr{G}\cong g^{*}\mathscr{G}$,
the lifts of $(Z,f^{*}\mathscr{G})$ and $(Z,g^{*}\mathscr{G})$ are
isomorphic and we get an étale correspondence of curves over $W(k)$:
\[
\xymatrix{ & \tilde{Z}\ar[dl]_{\tilde{f}}\ar[dr]^{\tilde{g}}\\
\tilde{X} &  & \tilde{X}
}
\]
Finally by Lemma \ref{Lemma:lift_no_core}, $X$ and $Z$ are the
reductions modulo $p$ of Shimura curves.
\end{proof}
\begin{example}
Let $X=X^{D}$ be a Shimura curve parametrizing fake elliptic curves
with multiplication by $\mathcal{O}_{D}$, as in Definition \ref{Definition:moduli_fake_elliptic_curves},
with $D$ having discriminant $d$. Let $N$ be prime to $d$ and
let $Z=X_{0}^{D}(N)$, i.e. a moduli space of pairs $(A_{1}\rightarrow A_{2})$
of fake elliptic curves equipped with a ``cyclic isogeny of fake-degree
$N$'' \cite[Section 2.2]{bakker2013frey}. There exist Hecke correspondences
\[
\xymatrix{ & Z\ar[dl]_{\pi_{1}}\ar[dr]^{\pi_{2}}\\
X &  & X
}
\]

Moreover, as in Example \ref{Example:Fake_elliptic_curves_brauer},
as long as $(p,dN)=1$, this correspondence has good reduction at
$p$ and the universal $p$-divisible group splits as $\mathscr{G}\oplus\mathscr{G}$
on $X$. Here $\mathscr{G}\rightarrow X$ is everywhere versally deformed
and $\pi_{1}^{*}\mathscr{G}\cong\pi_{2}^{*}\mathscr{G}$. In particular,
there are examples where the conditions of Corollary \ref{Corollary:p-div_shimura}
are met.
\end{example}

\begin{rem}
\label{Remark:mochizuki} Xia's lifting theorem yelds one of the canonical
lifts of Mochizuki. In particular, as Shimura curves are defined over
$\overline{\mathbb{Q}}$, Corollary \ref{Corollary:p-div_shimura}
provides a condition under which the a canonical lift of $X$ is defined
over $\overline{\mathbb{Q}}$. This yields a very partial response
to \cite[Problem 10]{oortopen}. 

\end{rem}

\begin{thm}
\label{Theorem:2_pullbacks_L_isomorphic_versally}Let $X\overset{f}{\leftarrow}Z\overset{g}{\rightarrow}X$
be an étale correspondence of smooth, geometrically connected, complete
curves without a core over $\mathbb{F}_{q}$ with $q$ a square. Let
$\mathscr{L}$ be a $\overline{\mathbb{Q}}_{l}$-local system on $X$
as in Theorem \ref{Theorem:p-div_companion_versal-1} such that $f^{*}\mathscr{L}\cong g^{*}\mathscr{L}$
as local systems on $Z$. Suppose the $\mathscr{G}\rightarrow X$
constructed via Theorem \ref{Theorem:p-div_companion_versal-1} is
everywhere versally deformed. Then $X$ and $Z$ are the reductions
modulo $p$ of Shimura curves.
\end{thm}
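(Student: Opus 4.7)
The strategy is to reduce to Corollary \ref{Corollary:p-div_shimura} by producing an isomorphism $f^{*}\mathscr{G}\cong g^{*}\mathscr{G}$ on $Z$, where $\mathscr{G}\to X$ is the height 2, dimension 1 BT group furnished by Theorem \ref{Theorem:p-div_companion_versal-1}. By hypothesis $\mathscr{G}$ is everywhere versally deformed, so since $f,g$ are étale, the Kodaira-Spencer maps pull back to surjections and $f^{*}\mathscr{G},g^{*}\mathscr{G}$ are everywhere versally deformed on $Z$; since $f,g$ are dominant and $\mathscr{G}$ is generically ordinary, so are both pullbacks, and they continue to have supersingular points (inherited from $\mathscr{G}$ via surjectivity of $f,g$).

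To establish the isomorphism $f^{*}\mathscr{G}\cong g^{*}\mathscr{G}$, I pass through Dieudonné isocrystals. By functoriality of contravariant Dieudonné theory under étale pullback, $\mathbb{D}(f^{*}\mathscr{G})\otimes\mathbb{Q}\cong f^{*}(\mathbb{D}(\mathscr{G})\otimes\mathbb{Q})$, and similarly for $g$. The slope argument underlying Corollary \ref{Corollary:local_system_p-div} goes through verbatim on $Z$: because both ordinary and supersingular fibers occur, a rank-1 sub-$F$-isocrystal (which would have constant slope) is impossible, and so each of $\mathbb{D}(f^{*}\mathscr{G})\otimes\mathbb{Q}$ and $\mathbb{D}(g^{*}\mathscr{G})\otimes\mathbb{Q}$ is an absolutely irreducible overconvergent $F$-isocrystal on the complete curve $Z$. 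Pulling back the compatibility from Theorem \ref{Theorem:p-div_companion_versal-1} gives that $\mathbb{D}(f^{*}\mathscr{G})\otimes\mathbb{Q}$ is compatible with $f^{*}\mathscr{L}\otimes\mathbb{Q}_{l}(-1/2)$, and likewise for $g$. Since $f^{*}\mathscr{L}\cong g^{*}\mathscr{L}$ by hypothesis, the two $F$-isocrystals share characteristic polynomials of Frobenius at every closed point of $Z$, and Abe's refined Brauer-Nesbitt theorem (Lemma \ref{Lemma:Abe_semi-simple-charpoly}) identifies them. Lemma \ref{Lemma:unique_generically_deformed_BT_group}, whose hypotheses (height 2, dimension 1, generically versally deformed, generically ordinary, isomorphic Dieudonné isocrystals) are all now verified, upgrades this to the desired isomorphism $f^{*}\mathscr{G}\cong g^{*}\mathscr{G}$.

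With this in hand, base-changing the entire setup to $\mathbb{F}$ preserves geometric connectedness, completeness, hyperbolicity, étaleness, and the no-core condition, and the pullback isomorphism persists. Corollary \ref{Corollary:p-div_shimura} then applies to yield canonical characteristic-zero lifts of the correspondence and identifies the lifted curves as Shimura curves via Mochizuki's theorem, so $X$ and $Z$ themselves are reductions modulo $p$ of Shimura curves. The main technical obstacle is the Dieudonné/Brauer-Nesbitt step: one must be confident that absolute irreducibility of the Dieudonné isocrystal survives étale pullback, which the ordinary/supersingular slope dichotomy secures, and then rely on Abe's theorem to recover an actual isomorphism of overconvergent $F$-isocrystals from matching Frobenius characteristic polynomials alone; everything else is bookkeeping with functorial properties already established in the preceding sections.
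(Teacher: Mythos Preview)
Your proposal is correct and follows essentially the same route as the paper: the paper's proof simply says that the uniqueness statement in Theorem \ref{Theorem:p-div_companion_versal-1} forces $f^{*}\mathscr{G}\cong g^{*}\mathscr{G}$ and then invokes Corollary \ref{Corollary:p-div_shimura}, whereas you unpack that uniqueness by directly verifying the hypotheses of Lemma \ref{Lemma:unique_generically_deformed_BT_group} (isomorphic Dieudonn\'e isocrystals via Abe's Brauer--Nesbitt, generic versal deformation via \'etaleness of $f,g$). Your explicit verification that the pullbacks retain ordinary and supersingular points, and your care with the base change to $\mathbb{F}$ before applying Corollary \ref{Corollary:p-div_shimura}, are welcome elaborations of steps the paper leaves implicit.
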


\begin{proof}
The uniqueness statement in Theorem \ref{Theorem:p-div_companion_versal-1}
immediately implies that $f^{*}\mathscr{G}\cong g^{*}\mathscr{G}$.
Apply Corollary \ref{Corollary:p-div_shimura}.
\end{proof}
\begin{rem}
\label{Remark:strategy_fake_elliptic_curves}Finally, we explain a
strategy to show that in the context of Theorem \ref{Theorem:2_pullbacks_L_isomorphic_versally},
$X$ is the reduction modulo $p$ of a moduli space of fake elliptic
curves and furthermore that $\mathscr{L}$ is the universal local
system. In particular, this strategy will imply that under the hypotheses
of Theorem \ref{Theorem:2_pullbacks_L_isomorphic_versally}, $\mathscr{L}$
comes from a family of fake elliptic curves. This is joint work-in-progress
with M. Sheng.

The canonical lift constructed by Xia is \emph{uniformizing} in the
sense of Mochizuki \cite{mochizuki1996ordinary}. This notion has
recently been reinterpreted in the context of $p$-adic nonabelian
Hodge theory. In particular, in Theorem \ref{Theorem:xia_lift} consider
the generic fiber $\tilde{X}_{K(k)}$. By virtue of the existence
of the $p$-divisible group, we obtain a rank 2 crystalline representation
$\pi_{1}(\tilde{X}_{K(k)})\rightarrow\text{GL}_{2}(\mathbb{Z}_{p})$
which is uniformizing in the sense that under the $p$-adic Simpson
correspondence of \cite{LSZ13a}, the associated Higgs bundle is uniformizing. 

We briefly outline the strategy, which involves a general result about
Shimura curves. Let $\tilde{Y}$ be an integral model of a Shimura
curve over $W(k)$ associated to a totally real field $E$ and a quaternion
algebra $D$. (The totally real field $E$ is isomorphic to $\mathbb{Q}$
if and only if $\tilde{Y}$ is a moduli space of fake elliptic curves.)
We prove that there exists a positive integer $f$ such that the uniformizing
Higgs bundle on $\tilde{Y}$ is $f$-periodic using Deligne's theory
of strange models. In particular, the uniformizing Higgs bundle corresponds
to a crystalline representation $\pi_{1}(\tilde{Y}_{K(k)})\rightarrow\text{GL}_{2}(\mathbb{Z}_{p^{f}})$,
or equivalently a Fontaine-Faltings module with endomorphism structure
$(V,\nabla,\varphi,\text{Fil},\iota)$. By forgetting the filtration,
one obtains an $F$ crystal in finite, locally free modules and multiplication
by $\mathbb{Z}_{p^{f}}$ on the special fiber $Y$. We must simply
show that the field generated by the Frobenius traces of the associated
object of $\textbf{F-Isoc}(Y)_{\mathbb{Q}_{p^{f}}}$ is isomorphic
to $E\subset\mathbb{Q}_{p^{f}}$, the reflex field of the Shimura
curve. This should again follow from the explicit computation with
strange models.

Given the above statement about the Frobenius traces, it will follow
that in the context of Theorem \ref{Theorem:2_pullbacks_L_isomorphic_versally},
$(X,\mathscr{L})$ comes from a universal family of fake elliptic
curves together with the associated local system. We emphasize that
while this strategy will only work in the restrictive context of Theorem
\ref{Theorem:2_pullbacks_L_isomorphic_versally}, it would provide
some evidence for Conjecture \ref{Conjecture:rank_2_fake_elliptic_curve}.
\end{rem}

\bibliographystyle{alphaurl}
\bibliography{descent_new}

Bergische Universität Wuppertal

F13.05

Gaußstraße 20, Wuppertal\\

\email{krishnamoorthy@alum.mit.edu}
\end{document}